\definecolor{verypalegray}{RGB}{248,248,248}
\definecolor{palegray}{RGB}{220,220,220}
\definecolor{gray}{RGB}{195,195,195}
\definecolor{darkgray}{RGB}{170,170,170}
\definecolor{blackgray}{RGB}{145,145,145}
\newcolumntype{L}[1]{>{\raggedright\let\newline\\\arraybackslash\hspace{0pt}}m{#1}}
\newcolumntype{C}[1]{>{\centering\let\newline\\\arraybackslash\hspace{0pt}}m{#1}}
\newcolumntype{R}[1]{>{\raggedleft\let\newline\\\arraybackslash\hspace{0pt}}m{#1}}
\newcommand{\calA}{{\mathcal{A}}} \newcommand{\calB}{{\mathcal{B}}}
\newcommand{\calC}{{\mathcal{C}}} \newcommand{\calD}{{\mathcal{D}}}
\newcommand{\calE}{{\mathcal{E}}}
 \newcommand{\calL}{{\mathcal{L}}}
\newcommand{\calM}{{\mathcal{M}}} \newcommand{\calN}{{\mathcal{N}}}
\newcommand{\calS}{{\mathcal{S}}}
 \newcommand{\calZ}{{\mathcal{Z}}}
\newcommand{\bA}{{\textbf{A}}}
\newcommand{\bG}{{\textbf{G}}} 
 \newcommand{\bL}{{\textbf{L}}}
 \newcommand{\bN}{{\textbf{N}}}
 \newcommand{\bR}{{\textbf{R}}}
 \newcommand{\bX}{{\textbf{X}}}
\newcommand{\bY}{{\textbf{Y}}} 
\newcommand{\ba}{{\textbf{a}}} \newcommand{\bb}{{\textbf{b}}}
\newcommand{\bc}{{\textbf{c}}} \newcommand{\bd}{{\textbf{d}}}
 \newcommand{\bv}{{\textbf{v}}}
\newcommand{\bw}{{\textbf{w}}} \newcommand{\bx}{{\textbf{x}}}
\newcommand{\by}{{\textbf{y}}}
\newcommand{\CC}{{\mathbb{C}}}
 \newcommand{\RR}{{\mathbb{R}}}
\newcommand{\ol}{\overline}
\newcommand{\oL}{{\ol{\calL}}}
\newcommand{\op}{{\ol{p}}}
\newcommand{\sss}{\scriptstyle}
\newcommand{\upbA}{\mathbf{A}^\uparrow}
\newcommand{\upA}{\calA^\uparrow}
\newcommand{\lowA}{\calA^\downarrow}
\newcommand{\rext}{\overrightarrow{\mathbf{e}}}
\newcommand{\RNF}{\mathbf{RNF}}
\newcommand{\dec}{\mathbf{decide}}
\newcommand{\comp}{\mathbf{compute}}
\newcommand{\punc}[1]{
\draw[fill=white,draw=black,thick] (#1,0) circle (1);}
\newcommand{\PUNC}[1]{
\draw[fill=black,draw=black,thick] (#1,0) circle (1);}
\newcommand{\puncture}[2]{
\draw[fill=white,draw=black,thick] (#1,0) circle (1.5);
\node at (#1,0) {\small$p_{#2}$};}
\newcommand{\PUNCTURE}[2]{
\draw[fill=black,draw=black,thick] (#1,0) circle (1.5);
\node at (#1,0) {\small\color{white}$p_{#2}$};}
\newcommand{\bvertex}[2]{\draw[fill=white,draw=black,thick] (#1,#2) circle (0.4);}
\newcommand{\dvertex}[1]{\draw[fill=black,draw=black,thick] (#1,0) circle (0.4);}
\newtheorem{theorem}{Theorem}[section]
\newtheorem{lemma}[theorem]{Lemma}
\newtheorem{corollary}[theorem]{Corollary}
\newtheorem{proposition}[theorem]{Proposition}
\newtheorem{definition}[theorem]{Definition}
\newtheorem{example}[theorem]{Example}
\newenvironment{thm}{\begin{theorem}~\\}{\end{theorem}}
\newenvironment{thm*}[1]{\begin{theorem}[#1]~\\}{\end{theorem}}
\newenvironment{lem}{\begin{lemma}~\\}{\end{lemma}}
\newenvironment{lem*}[1]{\begin{lemma}[#1]~\\}{\end{lemma}}
\newenvironment{cor}{\begin{corollary}~\\}{\end{corollary}}
\newenvironment{cor*}[1]{\begin{corollary}[#1]~\\}{\end{corollary}}
\newenvironment{pro}{\begin{proposition}~\\}{\end{proposition}}
\newenvironment{pro*}[1]{\begin{proposition}[#1]~\\}{\end{proposition}}
\newenvironment{dfn}[1]{\begin{definition}[#1]~\\}{\end{definition}}
\newenvironment{xam*}[1]{\begin{example}[#1]\rm~\\}{\end{example}}
\title{The Relaxation Normal Form of Braids is Regular}
\author{Vincent Jugé
\thanks{Mines ParisTech, 60 boulevard Saint-Michel, 75272 Paris Cedex 06, France \newline
Université Paris Diderot, Sorbonne Paris Cité, IRIF, UMR 8243 CNRS, F-75205 Paris, France \newline
ENS Paris-Saclay, LSV, UMR 8643 CNRS, France}}
\date{}
\begin{document}

\maketitle

\begin{abstract}
Braids can be represented geometrically as laminations of punctured disks.
The geometric complexity of a braid is the minimal complexity of a lamination that represents it,
and tight laminations are representatives of minimal complexity.
These laminations give rise to a normal form of braids, via a relaxation algorithm.
We study here this relaxation algorithm and the associated normal form.
We prove that this normal form is regular and prefix-closed.
We provide an effective construction of a deterministic automaton that recognizes this normal form.
\end{abstract}

\section{Introduction}

Braid groups can be approached from various points of view,
including algebraic and geometric ones.
In the algebraic viewpoint, the braid group is defined
by a finite presentation, i.e. a finite generating family $\bX$ and finite number of relations.
Two of the most widely used generating families are
the Artin generators and the Garside generators.
In this approach, braids are viewed as equivalence classes of finite words over the
finite alphabet $\bX$.

A normal form consists in choosing exactly one representative in each equivalence class.
Three important desirable properties are computability, regularity and geodicity.
The normal form is \emph{computable} if, from each word $\bw$ over the alphabet $\bX$,
one can compute the word $\bx$ that is equivalent to $\bw$ and that
belongs to the normal form.
In particular, the existence of a computable normal form
implies that the word problem is solvable.
The normal form is \emph{regular} if it forms a regular subset of the free monoid $\bX^\ast$.
The normal form is \emph{geodesic} if its elements are shortest representatives
of their equivalence classes.

Computability and regularity do \emph{not} depend on the family $\bX$.
Indeed, if $\bX$ and $\bY$ are two finite generating families,
embedding $\bX$ into the free monoid $\bY^\ast$ allows to consider normal forms over the alphabet $\bX$
as normal forms over the alphabet $\bY$:
if a normal form is computable (respectively, regular) on the alphabet $\bX$,
it will remain computable (respectively, regular) on the alphabet $\bY$.

Geodicity is related to shortest paths in the Cayley graph of the group associated to $\bX$.
Geodicity depends on the family $\bX$, i.e. a normal form may be geodesic for $\bX$ and not for $\bY$.
In fact, even the existence of regular geodesic normal forms depends on $\bX$
(see~\cite[Example~4.4.2]{Epstein:1992:WPG:573874}).

Let us come back to the specific case of the braid group.
The Artin definition of the braid group is
\[B_n = \left\langle \sigma_1, \dots, \sigma_{n-1} \left|\begin{array}{l}
\sigma_i \sigma_{i+1} \sigma_i = \sigma_{i+1} \sigma_i \sigma_{i+1} \text{ for all } i \\
\sigma_i \sigma_j = \sigma_j \sigma_i \text{ for all } i,j \text{ s.t. } |i-j| \geq 2 \end{array}\right\rangle\right..\]
The set $\{\sigma_1, \sigma_1^{-1}, \ldots, \sigma_{n-1}, \sigma_{n-1}^{-1}\}$
is that of Artin generators.
They are often considered as the ``most natural'' generators because of their role
in the representation of braids as an isotopy class of \emph{braid diagrams}, as illustrated in Figure~\ref{fig:strands}.

\begin{figure}[!ht]
\begin{center}
\begin{tikzpicture}[scale=0.20]
\draw[draw=black,thick] (-2,5) -- (-2,18);
\draw[draw=black,thick] (2,5) -- (2,18);
\draw[draw=black,thick] (9,5) -- (9,18);
\draw[draw=black,thick] (13,5) -- (13,9) -- (17,13) -- (17,18);
\draw[draw=black,thick] (17,5) -- (17,9) -- (15.25,10.75);
\draw[draw=black,thick] (14.75,11.25) -- (13,13) -- (13,18);
\draw[draw=black,thick] (21,5) -- (21,18);
\draw[draw=black,thick] (28,5) -- (28,18);
\draw[draw=black,thick] (32,5) -- (32,18);
\node[anchor=south] at (-2,18) {\scriptsize $1$};
\node[anchor=south] at (2,18) {\scriptsize $2$};
\node[anchor=south] at (9,18) {\scriptsize $i-1$};
\node[anchor=south] at (13,18) {\scriptsize $i$};
\node[anchor=south] at (17,18) {\scriptsize $i+1$};
\node[anchor=south] at (21,18) {\scriptsize $i+2$};
\node[anchor=south] at (28,18) {\scriptsize $n-1$};
\node[anchor=south] at (32,18) {\scriptsize $n$};
\node at (5.5,11) {$\cdots$};
\node at (24.5,11) {$\cdots$};
\end{tikzpicture}
\end{center}
\caption{Braid diagram of the generator $\sigma_i$ ($1 \leq i \leq n-1$)}
\label{fig:strands}
\end{figure}
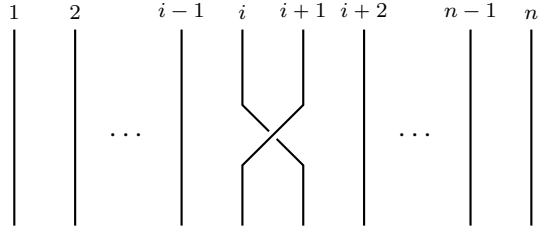

The monoid of positive braids $B_n^+$ is the monoid generated by the \emph{positive} Artin generators $\sigma_1,\ldots,\sigma_{n-1}$.
It has a lattice structure for the left-divisibility relation~\cite{dehornoy2008ordering}.
Hence, consider the braid $\Delta_n = \mathrm{LCM}(\sigma_1,\ldots,\sigma_{n-1})$, which we call the \emph{Garside element} of the monoid.
Note that the set $\bG^+ = \{\beta \in B_n^+ : \beta$ left-divides $\Delta_n\}$ generates the braid monoid.
The set $\bG = \bG^+ \cup \{\beta : \beta^{-1} \in \bG^+\}$ is that of \emph{Garside generators} and generates the braid group.

The \emph{symmetric Garside normal form} over the braid group is built as follows.
First, each braid word $\bw$ is rewritten under the form $\Delta_n^{-k} \cdot \bv$,
where $k \geq 0$ and $\bv \in B_n^+$,
for instance by using rewriting rules of the form
\[\ba \cdot \sigma_i^{-1} \cdot \bb \mapsto \Delta^{-2} \cdot \ba \cdot \left(\sigma_i^{-1} \Delta\right) \cdot \Delta \cdot \bb.\]
Second, positive left-divisors of $\bv$ are selected incrementally,
thereby factoring $\bv$ as a product $\bv = u_1 \ldots u_\ell$, where
$u_i = \mathrm{GCD}(\Delta_n,u_i u_{i+1} \ldots u_\ell)$ and $u_\ell$ is non-trivial.
Finally, the elements $\Delta_n^{-1}$ appearing in the power $\Delta_n^{-k}$ are canceled out
with the leftmost factors of $v$.
For instance, if $k \leq \ell$, we obtain a word of the form $w_1^{-1} \cdot \ldots \cdot w_k^{-1} \cdot u_{k+1} \cdot \ldots \cdot u_\ell$,
where each factor $w_i$ or $u_i$ belongs to the set $\bG^+$.
The word $w_1^{-1} \cdot \ldots \cdot w_k^{-1} \cdot u_{k+1} \cdot \ldots \cdot u_\ell$ is the symmetric Garside normal form of the braid word $\bw$.

The Garside normal form is very successful because it is simultaneously regular,
synchronously automatic (hence, very easy to compute incrementally) and
geodesic on the family $\bG$. In particular, the word problem in $B_n$ is decidable.

Another normal form is the \emph{ShortLex normal form} over Artin generators, which can be described as follows.
Consider some (arbitrary) linear ordering on the set of Artin generators.
Let $\beta$ be some braid, and let $W_\beta$ be the set of words that represent $\beta$.
The ShortLex normal form of $\beta$ is the word $\bw \in W_\beta$
such that, for all words $\bx \in W_\beta$, we have $|\bw| \leq |\bx|$ and
$|\bw| = |\bx| \Rightarrow \bw \leq_\mathrm{lex} \bx$.

Since the word problem in the braid group $B_n$ is decidable,
the ShortLex normal form over Artin generators is computable, in addition to being geodesic.
However, the regularity of ShortLex is an open problem.
In fact, except for $B_2$ and $B_3$, it is not known whether there exists a regular geodesic normal form
for the set of Artin generators.

Let us now turn our attention to the geometric viewpoint.
A braid is viewed as a class of \emph{laminations}, and acts on such laminations.
This is to be compared with the algebraic viewpoint, in which a braid is a class of words and acts on words.

\begin{figure}[!ht]
\begin{center}
\begin{tikzpicture}[scale=0.15]
\draw [draw=black,fill=verypalegray] (-47,-12.5) -- (-34,-12.5) -- (-34,12.5) -- (-47,12.5) -- cycle;
\draw [draw=black] (-40.5,12.5) -- (-40.5,14.5);
\draw [draw=black] (-30.5,14.5) -- (-30.5,20.8) -- (-50.5,20.8) -- (-50.5,14.5) -- cycle;
\node at (-40.5,19) {Braid diagram $\calD$};
\node at (-40.5,16) {representing $\sigma_2$};

\draw [draw=black,ultra thick] (-35.5,11.5) -- (-35.5,2.5) -- (-40.5,-2.5) -- (-40.5,-11.5);
\draw [draw=black,ultra thick] (-40.5,11.5) -- (-40.5,2.5) -- (-38.4,0.4);
\draw [draw=black,ultra thick] (-37.6,-0.4) -- (-35.5,-2.5) -- (-35.5,-11.5);
\draw [draw=black,ultra thick] (-45.5,11.5) -- (-45.5,-11.5);

\draw[->,>=stealth',draw=black,thick] (-32,0) -- (-20,0);

\draw [draw=black,fill=verypalegray] (-16,-6.5) -- (13,-6.5) -- (11,-16.5) -- (-18,-16.5) -- cycle;
\draw [draw=palegray,fill=palegray,domain=-180:180,samples=1000] plot ({10*sin( \x )}, {-11.5-3*cos( \x )});
\draw [draw=black,very thin,densely dotted] (10,-11.5) -- (-15.5,-11.5);
\draw [draw=black,fill=white] (-16,-6.5) -- (-16.4,-8.5) -- (-12.4,-8.5) -- (-12,-6.5) -- cycle;
\node at (-14.2,-7.5) {\tiny $3$};

\draw [draw=black,fill=white,thick,domain=-180:180,samples=1000] plot ({5+sin( \x )}, {-11.5-0.3*cos( \x )} );
\draw [draw=black,fill=white,thick,domain=-180:180,samples=1000] plot ({sin( \x )}, {-11.5-0.3*cos( \x )} );
\draw [draw=black,fill=white,thick,domain=-180:180,samples=1000] plot ({-5+sin( \x )}, {-11.5-0.3*cos( \x )} );
\draw [draw=black,fill=black,thick,domain=-180:180,samples=1000] plot ({-10+sin( \x )}, {-11.5-0.3*cos( \x )} );

\draw [draw=black,ultra thick] (5,11.5) -- (5,2.5) -- (0,-2.5) -- (0,-11.5);
\draw [draw=black,ultra thick] (0,11.5) -- (0,2.5) -- (2.2,0.3);
\draw [draw=black,ultra thick] (2.8,-0.3) -- (5,-2.5) -- (5,-11.5);
\draw [draw=black,ultra thick] (-5,11.5) -- (-5,-11.5);
\draw [draw=black,thick] (-10,11.5) -- (-10,-11.5);

\draw [draw=black,fill=verypalegray] (-16,5) -- (13,5) -- (11,-5) -- (-18,-5) -- cycle;
\draw [draw=palegray,fill=palegray,domain=-180:180,samples=1000] plot ({10*sin( \x )}, {-3*cos( \x )});
\draw [draw=black,very thin,densely dotted] (10,0) -- (-15.5,0);
\draw [draw=black,fill=white] (-16,5) -- (-16.4,3) -- (-12.4,3) -- (-12,5) -- cycle;
\node at (-14.2,4) {\tiny $2$};

\draw [draw=black,fill=white,thick,domain=-180:180,samples=1000] plot ({1.25+sin( \x )}, {1.25-0.3*cos( \x )} );
\draw [draw=black,fill=white,thick,domain=-180:180,samples=1000] plot ({1.25+sin( \x )}, {-1.25-0.3*cos( \x )} );
\draw [draw=black,fill=white,thick,domain=-180:180,samples=1000] plot ({-5+sin( \x )}, {-0.3*cos( \x )} );
\draw [draw=black,fill=black,thick,domain=-180:180,samples=1000] plot ({-10+sin( \x )}, {-0.3*cos( \x )} );

\draw [draw=black,thick,densely dotted] (-5,0) -- (-5,-5);
\draw [draw=black,thick,densely dotted] (1.25,1.25) -- (5,-2.5) -- (5,-5);
\draw [draw=black,thick,densely dotted] (1.25,-1.25) -- (0,-2.5) -- (0,-5);
\draw [draw=black,thick,densely dotted] (-10,0) -- (-10,-5);

\draw [draw=black,ultra thick] (5,5) -- (5,2.5) -- (1.6,-1.4);
\draw [draw=black,ultra thick] (0,5) -- (0,2.5) -- (1.4,1.1);
\draw [draw=black,ultra thick] (-5,5) -- (-5,0);
\draw [draw=black,thick] (-10,5) -- (-10,0);

\draw [draw=black,fill=white,thick,domain=-90:90,samples=1000] plot ({1.25+sin( \x )}, {1.25-0.3*cos( \x )} );
\draw [draw=black,fill=white,thick,domain=-90:90,samples=1000] plot ({1.25+sin( \x )}, {-1.25-0.3*cos( \x )} );
\draw [draw=black,fill=white,thick,domain=-90:90,samples=1000] plot ({-5+sin( \x )}, {-0.3*cos( \x )} );

\draw [draw=black,fill=verypalegray] (-16,16.5) -- (13,16.5) -- (11,6.5) -- (-18,6.5) -- cycle;
\draw [draw=palegray,fill=palegray,domain=-180:180,samples=1000] plot ({10*sin( \x )}, {11.5-3*cos( \x )});
\draw [draw=black,very thin,densely dotted] (10,11.5) -- (-15.5,11.5);
\draw [draw=black,fill=white] (-16,16.5) -- (-16.4,14.5) -- (-12.4,14.5) -- (-12,16.5) -- cycle;
\node at (-14.2,15.5) {\tiny $1$};

\draw [draw=black,thick,densely dotted] (5,11.5) -- (5,6.5);
\draw [draw=black,thick,densely dotted] (0,11.5) -- (0,6.5);
\draw [draw=black,thick,densely dotted] (-5,11.5) -- (-5,6.5);
\draw [draw=black,thick,densely dotted] (-10,11.5) -- (-10,6.5);

\draw [draw=black,fill=white,thick,domain=-180:180,samples=1000] plot ({5+sin( \x )}, {11.5-0.3*cos( \x )} );
\draw [draw=black,fill=white,thick,domain=-180:180,samples=1000] plot ({sin( \x )}, {11.5-0.3*cos( \x )} );
\draw [draw=black,fill=white,thick,domain=-180:180,samples=1000] plot ({-5+sin( \x )}, {11.5-0.3*cos( \x )} );
\draw [draw=black,fill=black,thick,domain=-180:180,samples=1000] plot ({-10+sin( \x )}, {11.5-0.3*cos( \x )} );

\draw [draw=black,thick,domain=-180:180,samples=1000] plot ({-10+2.5*sin( \x )}, {11.5-0.75*cos( \x )} );
\draw [draw=black,thick,domain=-180:180,samples=1000] plot ({-8+5.5*sin( \x )}, {11.5-1.65*cos( \x )} );
\draw [draw=black,thick,domain=-180:180,samples=1000] plot ({-6+8.5*sin( \x )}, {11.5-2.55*cos( \x )} );
\draw [draw=black,thick,domain=-180:180,samples=1000] plot ({-4+11.5*sin( \x )}, {11.5-3.45*cos( \x )} );

\draw [draw=black,thick,domain=-180:180,samples=1000] plot ({-10+2.5*sin( \x )}, {-0.75*cos( \x )} );
\draw [draw=black,thick,domain=-180:180,samples=1000] plot ({-8+5.5*sin( \x )}, {-1.65*cos( \x )} );
\draw [draw=black,thick,domain=90:270,samples=1000] plot ({-8+6.5*sin( \x )}, {2.55*cos( \x )} );
\draw [draw=black,thick,domain=-90:90,samples=1000] plot ({-3.75+10.75*sin( \x )}, {2.925*cos( \x )} );
\draw [draw=black,thick,domain=-180:180,samples=1000] plot ({-3.75+11.75*sin( \x )}, {3.525*cos( \x )} );
\draw [draw=black,thick] (-1.6,0) -- (7.1,0);

\draw [draw=black,thick,domain=-180:180,samples=1000] plot ({-10+2.5*sin( \x )}, {-11.5-0.75*cos( \x )} );
\draw [draw=black,thick,domain=-180:180,samples=1000] plot ({-8.25+5.25*sin( \x )}, {-11.5-1.575*cos( \x )} );
\draw [draw=black,thick,domain=90:270,samples=1000] plot ({-8.25+6.25*sin( \x )}, {-11.5+2.175*cos( \x )} );
\draw [draw=black,thick,domain=-90:90,samples=1000] plot ({0.25+2.25*sin( \x )}, {-11.5+1.175*cos( \x )} );
\draw [draw=black,thick,domain=90:270,samples=1000] plot ({4.75+2.25*sin( \x )}, {-11.5+1.175*cos( \x )} );
\draw [draw=black,thick,domain=-90:90,samples=1000] plot ({-3.75+10.75*sin( \x )}, {-11.5+2.925*cos( \x )} );
\draw [draw=black,thick,domain=-180:180,samples=1000] plot ({-3.75+11.75*sin( \x )}, {-11.5-3.525*cos( \x )} );

\draw[draw=black,thick] (15,0) -- (27,0);
\draw[draw=black,thick] (27,0) arc (90:0:3);
\draw[draw=black,thick] (30,-3) -- (30,-39);
\draw[draw=black,thick] (30,-39) arc (0:-90:3);
\draw[->,>=stealth',draw=black,thick] (27,-42) -- (23.5,-42);

\draw [draw=black,fill=verypalegray] (-55,-54) -- (-55,-30) -- (-31,-30) -- (-31,-54) -- cycle;
\draw [draw=black,fill=white] (-55,-30) -- (-55,-32) -- (-51,-32) -- (-51,-30) -- cycle;
\node at (-53,-31) {\tiny $1$};
\draw [draw=black] (-43,-30) -- (-43,-28);
\draw [draw=black] (-53,-28) -- (-53,-22) -- (-33,-22) -- (-33,-28) -- cycle;
\node at (-43,-23.5) {\small Lamination $\bL$};
\node at (-43,-26.5) {\small representing $\varepsilon$};

\draw[fill=palegray,draw=palegray] (-41,-42) circle (8);
\draw[draw=black] (-54,-42) -- (-33,-42);
\draw[fill=black,draw=black,thick] (-49,-42) circle (1);

\draw[draw=black,ultra thick] (-47,-42) arc (0:360:2) -- cycle;
\draw[draw=black,ultra thick] (-43,-42) arc (0:360:4.5) -- cycle;
\draw[draw=black,ultra thick] (-39,-42) arc (0:360:7) -- cycle;
\draw[draw=black,ultra thick] (-35,-42) arc (0:360:9.5) -- cycle;

\draw[fill=white,draw=black,thick] (-45,-42) circle (1);
\draw[fill=white,draw=black,thick] (-41,-42) circle (1);
\draw[fill=white,draw=black,thick] (-37,-42) circle (1);

\draw [draw=black,fill=verypalegray] (-30,-54) -- (-30,-30) -- (-5,-30) -- (-5,-54) -- cycle;
\draw [draw=black,fill=white] (-30,-30) -- (-30,-32) -- (-26,-32) -- (-26,-30) -- cycle;
\node at (-28,-31) {\tiny $2$};

\draw[fill=palegray,draw=palegray] (-15.5,-42) circle (8.5);
\draw[draw=black] (-28,-42) -- (-7,-42);
\draw[fill=black,draw=black,thick] (-24,-42) circle (1);

\draw[draw=black,ultra thick] (-22,-42) arc (0:360:2) -- cycle;
\draw[draw=black,ultra thick] (-18,-42) arc (0:360:4.5) -- cycle;
\draw[draw=black,ultra thick] (-28,-42) arc (180:0:5.5) -- (-10,-42) arc (360:180:9) -- cycle;
\draw[draw=black,ultra thick] (-9,-42) arc (360:0:10) -- cycle;

\draw[fill=white,draw=black,thick] (-20,-42) circle (1);
\draw[fill=white,draw=black,thick] (-15,-40) circle (1);
\draw[fill=white,draw=black,thick] (-15,-44) circle (1);

\draw [draw=black,fill=verypalegray] (-4,-54) -- (-4,-30) -- (22,-30) -- (22,-54) -- cycle;
\draw [draw=black,fill=white] (-4,-30) -- (-4,-32) -- (0,-32) -- (0,-30) -- cycle;
\node at (-2,-31) {\tiny $3$};
\draw [draw=black] (9,-30) -- (9,-28);
\draw [draw=black] (-1,-28) -- (-1,-22) -- (19,-22) -- (19,-28) -- cycle;
\node at (9,-23.5) {\small Lamination $\bL \cdot \sigma_2$};
\node at (9,-26.5) {\small representing $\sigma_2$};

\draw[fill=palegray,draw=palegray] (11,-42) circle (9);
\draw[draw=black] (-3,-42) -- (20,-42);
\draw[fill=black,draw=black,thick] (2,-42) circle (1);

\draw[draw=black,ultra thick] (4,-42) arc (0:360:2) -- cycle;
\draw[draw=black,ultra thick] (8,-42) arc (0:360:4.5) -- cycle;
\draw[draw=black,ultra thick] (9,-42) arc (360:180:5.5) arc (180:0:9.5) arc (360:180:2) arc (0:180:2) -- cycle;
\draw[draw=black,ultra thick] (18,-42) arc (360:0:10.5) -- cycle;

\draw[fill=white,draw=black,thick] (6,-42) circle (1);
\draw[fill=white,draw=black,thick] (11,-42) circle (1);
\draw[fill=white,draw=black,thick] (15,-42) circle (1);
\end{tikzpicture}
\end{center}
\caption{Braid acting on a lamination}
\label{fig:braid-action-intro}
\end{figure}
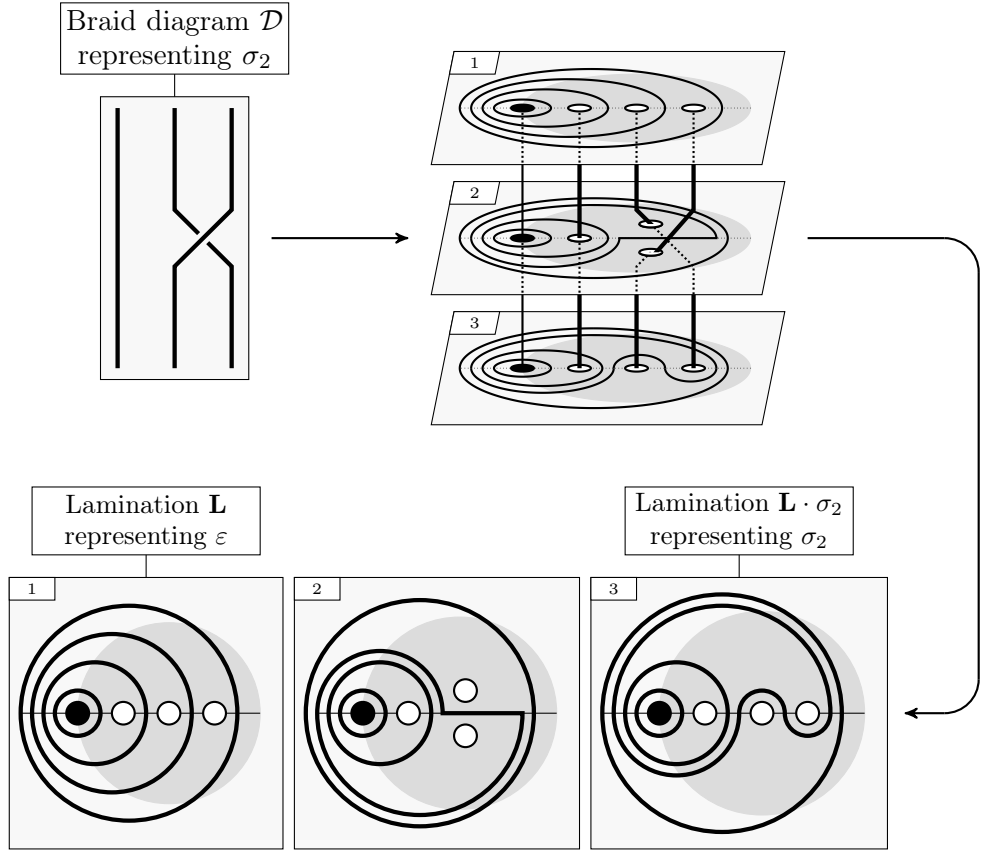

The lamination $\bL$, represented in the left bottom part of Figure~\ref{fig:braid-action-intro},
consists in one \emph{fixed puncture} (in black) and
$n$ \emph{mobile punctures} (in white) situated on an horizontal axis,
as well as $n+1$ non-intersecting closed curves $\bL_0, \bL_1, \ldots, \bL_n$,
such that the finite part of the plane delimited by each curve $\bL_i$ contains both the fixed puncture and $i$ mobile punctures.
This lamination $\bL$ is called \emph{trivial}, and represents the trivial braid $\varepsilon$.

A braid $\beta \in B_n$, represented by some braid diagram $\calD$, acts on the lamination $\bL$ as follows.
We place the $n$ mobile punctures on the top of the $n$ strands of $\calD$,
then let these punctures slide along the strands of $\calD$, until we reach the bottom of $\calD$.
At the same time, we force the $n+1$ curves to follow the motion prescribed by the punctures.
Doing so, we obtain the lamination $\bL \cdot \beta$, which will henceforth represent the braid $\beta$ itself.
Figure~\ref{fig:braid-action-intro} illustrates the action of the braid $\sigma_2$ on $\bL$.

In spite of its differences with the algebraic approach,
the lamination-based point of view on braids also provides a way of
obtaining normal forms.
First, to each lamination we associate an equivalent \emph{tight} lamination.
Second, thanks to a \emph{relaxation algorithm}, we perform successive simplifications of this tight lamination.
Each simplification consists in making some braid act on our lamination
in order to obtain a simplified tight lamination.
We perform such simplifications until reaching the trivial lamination.
Finally, having found a product of braids that maps the original lamination to the trivial one,
inverting that product yields a normal form for the original lamination.

Note the similarity between the Garside procedure and the above one:
we begin with some cleaning step, then we perform iteratively simplifications,
and finally we gather our results in order to obtain the desired normal form.

Relaxation algorithms play a central role in the study of the \emph{geometric complexity} of braids
by Dynnikov and Wiest~\cite{dynnikov:hal-00001267}.
Observing that well-chosen relaxation algorithms provide normal forms of short length, Dynnikov and Wiest prove
that two measures of complexity of braids are comparable,
although the first measure stems from the geometric world of laminations,
while the second measure is related to factorisations of braids with repeated factors.

Another remarkable achievement obtained with relaxation algorithms is the Bressaud normal form~\cite{Bressaud-normal-form}.
This normal form relies on an alternative geometric representation of braids,
and was one seminal example of regular $\sigma$-consistent normal form.
However, although the Bressaud normal form is known to be asynchronously automatic,
it is yet unknown whether it maps braids to factorisations of minimal length (up to a multiplicative constant).

We focus below on a specific relaxation algorithm,
which is the right-relaxation algorithm considered by Caruso in her PhD Thesis~\cite{carusoPhD}.
The associated normal form is called the \emph{relaxation normal form}.
Each simplification is obtained through the action of a braid chosen from a finite family of \emph{sliding braids},
so that this braid be ``maximal'' in some sense.

The right-relaxation algorithm is a variant of the relaxation algorithm used by Dynnikov and Wiest.
The difference between both algorithms resides in the family of braids that may act on laminations
and in the criteria used for choosing which element of the family should be chosen.
In~\cite{carusoPhD}, Caruso identifies geometrical features of a lamination $\bL \cdot \beta$
that may be used to estimate the complexity of the relaxation normal form of the braid $\beta$.

Our main contribution consists in proving that the relaxation normal form is regular
and in constructing an automaton that recognizes the relaxation normal form.
To the best of our knowledge, our result provides the first known example of regular normal form
obtained by applying a relaxation algorithm on braid laminations.

The relaxation normal form does not preserve the braid positivity
(i.e. the normal form of a positive braid may contain negative factors) and is not geodesic
(for Artin, Garside or sliding generators).
For example, the relaxation normal form of the braid $\sigma_2^2 \sigma_1$ is
the word $\sigma_2 \cdot \sigma_2 \cdot \sigma_1 \sigma_2 \cdot \sigma_2^{-1}$.
Yet, it enjoys several additional properties.

In what follows, we prove that the relaxation normal form is regular,
and provide a deterministic automaton that recognizes its language.
We sketch a proof in section~\ref{section:braid-laminations},
indicating the main ideas and objects
that lead to the regularity of the relaxation normal form,
and we provide rigorous proofs in section~\ref{section:proofs}.
We study several side problems in subsequent sections.
We investigate the automaticity of the relaxation normal form, and we prove that it is
synchronously biautomatic for $n \leq 3$ and not asynchronously right automatic for $n \geq 4$.
Then, we show how to read the $\sigma$-positivity of a braid on its relaxation normal form.
We also prove that the above-mentioned automaton is approximately minimal.
Finally, we review some variants of the relaxation normal form,
and explain for each of them to which extent our results also apply to these variants.

\section{The Relaxation Normal Form is Regular: Key Ideas}
\label{section:braid-laminations}

In section~\ref{section:braid-laminations} we first present standard definitions
and theorems about braids and \emph{laminations} of the punctured disk.
These results mainly come from algebraic topology and they can be found in standard literature,
e.g. in~\cite{birman-braids-links-mcg,dehornoy2008ordering,dynnikov:hal-00001267,farb2011primer,Fenn_orderingthe}.

Then, we present relaxation algorithms operating on braid laminations in general,
including the right-relaxation algorithm that we study later on.
Finally, we describe the main ideas that will allow us to later prove that the relaxation normal form is regular.
This last part is only meant to provide the reader with a general idea of why the normal form is regular,
anticipating the more rigorous proofs that will be the focal point of section~\ref{section:proofs}.

\subsection{Braids and Laminations}

The group of braids with $n$ strands is usually known
by its algebraic description, due to Artin~\cite{Artin-1926}.

\begin{dfn}{Braid group}
The group of braids with $n$ strands is the group
\[B_n = \left\langle \sigma_1, \dots, \sigma_{n-1} \left|\begin{array}{l}
\sigma_i \sigma_{i+1} \sigma_i = \sigma_{i+1} \sigma_i \sigma_{i+1} \text{ for all } i \\
\sigma_i \sigma_j = \sigma_j \sigma_i \text{ for all } i,j \text{ s.t. } |i-j| \geq 2 \end{array}\right\rangle\right..\]
\end{dfn}

However, in this paper we focus on another, equivalent, approach
of the group of braids, and we consider
the group of braids with $n$ strands as
the mapping class group of the unit disk with $n$ punctures
(see~\cite{birman-braids-links-mcg} for details).

Let $D^2 \subseteq \CC$ be the open unit disk
and let $P_n \subseteq (-1,1)$ be a set of size $n$.
We will refer to the elements of $P_n$ as being \emph{mobile punctures}
in the disk $D^2$.
In addition, we call \emph{fixed puncture} the point $-1$.
Finally, let $H_n$ be the group of orientation-preserving homeomorphisms
$h : \CC \mapsto \CC$ such that $h(P_n) = P_n$ and $h(z) = z$ for all $z \in (-\infty,-1) \cup (1,+\infty)$.

\begin{thm}
The group $B_n$ of braids with $n$ strands is isomorphic to the quotient group
of $H_n$ by the isotopy relation.
\label{thm:braids}
\end{thm}

Hence, a braid is the isotopy class $[h]$ of some homeomorphism $h$.
According to standard notations for braids, we will
let braids act on the complex plane \emph{on the right},
i.e. denote by $[g] [h]$ the isotopy class of the homeomorphism $[h \circ g]$:
composition to the left gives rise to a
braid multiplication to the right, and vice-versa.

This characterisation does \emph{not}
depend on which set $P_n$ of mobile punctures we choose.
Therefore, in what follows, we shall never consider the set $P_n$ as fixed.
In addition, we will always order punctures from left to right (i.e. from the smallest to the greatest), as follows:
the fixed puncture is $p_0 = -1$, and the mobile punctures are $p_1 < \ldots < p_n$.
We will also abuse notations and denote by $p_{n+1}$ the point $+1$.

Each braid appears as a class of homeomorphisms of $\CC$,
which conveys the idea of giving a graphical representation of the braid.

\begin{dfn}{Lamination}
Let us consider a set $P_n$ of $n$ mobile punctures in the real interval $(-1,1)$.

An \emph{$n$-strand lamination} of the disk $D^2$
is the union, hereafter denoted by $\calL$, of the set $P_n \cup \{-1\}$,
and of $n+1$ non-intersecting smooth, closed, simple curves $\calL_0,\ldots,\calL_n$ such that
each curve $\calL_j$ crosses exactly once the real interval $(-\infty,-1)$,
does not cross the real interval $[1,+\infty)$ and splits the plane $\CC$ into
(a) one \emph{inner} part that contains $-1$ and $j$ mobile punctures, and
(b) one \emph{outer} part that contains $+1$ and $n-j$ mobile punctures.
\label{dfn:trivial-lamination}
\end{dfn}

We identify braids with isotopy classes of laminations (see~\cite{birman-braids-links-mcg} for details):
if $\calL$ and $\calL'$ are laminations (defined up to isotopy), then there exists
a unique braid $\beta$ that sends $\calL$ to $\calL'$. 
In Figure~\ref{fig:laminations} as well as in the sequel of the document,
mobile punctures are indicated by white dots
and the fixed puncture is indicated by a black dot;
the gray area represents the unit disk $D^2$,
and the real axis $\RR$ is drawn with a thin horizontal line.
Hereafter, and depending on the context, we may
omit to represent the unit disk $D^2$,
as well as the names $p_0,\ldots,p_n$.

In addition, we will always denote by $\calL_j$ the $j$-th curve of the lamination $\calL$,
i.e. the curve of $\calL$ whose left part contains exactly $j$ mobile punctures.
In particular, note that the names of the punctures $p_0,\ldots,p_n$ depend uniquely of the order of the punctures.
Hence, when applying the braid $\sigma_i^{\pm 1}$ on a lamination,
both punctures $p_i$ and $p_{i+1}$ move, and they are respectively renamed $p_{i+1}$ and $p_i$.
On the contrary, and although the curves $\calL_j$ may move, they are not renamed,
as shown in Figure~\ref{fig:laminations}.

\begin{figure}[!ht]
\begin{center}
\begin{tikzpicture}[scale=0.23]
\draw[fill=palegray,draw=palegray] (16,0) circle (8);
\draw[draw=black] (3,0) -- (24,0);
\PUNCTURE{8}{0}

\draw[draw=black,ultra thick] (10,0) arc (0:360:2);
\draw[draw=black,ultra thick] (14,0) arc (0:360:4.5);
\draw[draw=black,ultra thick] (18,0) arc (0:360:7);
\draw[draw=black,ultra thick] (22,0) arc (0:360:9.5);

\puncture{12}{1}
\puncture{16}{2}
\puncture{20}{3}

\node[anchor=north] at (13.5,-11.5) {\small(a) Trivial lamination};

\node[anchor=south] at (12.5,9.25) {\small $\calL_3$};
\node[anchor=south] at (11,6.75) {\small $\calL_2$};
\node[anchor=south] at (9.5,4.25) {\small $\calL_1$};
\node[anchor=south] at (8,1.75) {\small $\calL_0$};

\draw[fill=palegray,draw=palegray] (47,0) circle (9);
\draw[draw=black] (33,0) -- (56,0);
\PUNCTURE{38}{0}

\draw[draw=black,ultra thick] (40,0) arc (0:360:2);
\draw[draw=black,ultra thick] (44,0) arc (0:360:4.5);
\draw[draw=black,ultra thick] (45,0) arc (0:180:5.5);
\draw[draw=black,ultra thick] (53,0) arc (360:180:9.5);
\draw[draw=black,ultra thick] (54,0) arc (360:0:10.5);

\draw[draw=black,ultra thick] (49,0) arc (180:0:2);
\draw[draw=black,ultra thick] (45,0) arc (180:360:2);

\puncture{42}{1}
\puncture{47}{2}
\puncture{51}{3}

\node[anchor=north] at (44.5,-11.5) {\small(b)Non-trivial lamination};

\node[anchor=south] at (43.5,10.25) {\small $\calL_3$};
\node[anchor=south] at (39.5,5.25) {\small $\calL_2$};
\node[anchor=north] at (39.5,-4.25) {\small $\calL_1$};
\node[anchor=south] at (38,1.75) {\small $\calL_0$};
\end{tikzpicture}
\end{center}
\caption{Two laminations}
\label{fig:laminations}
\end{figure}
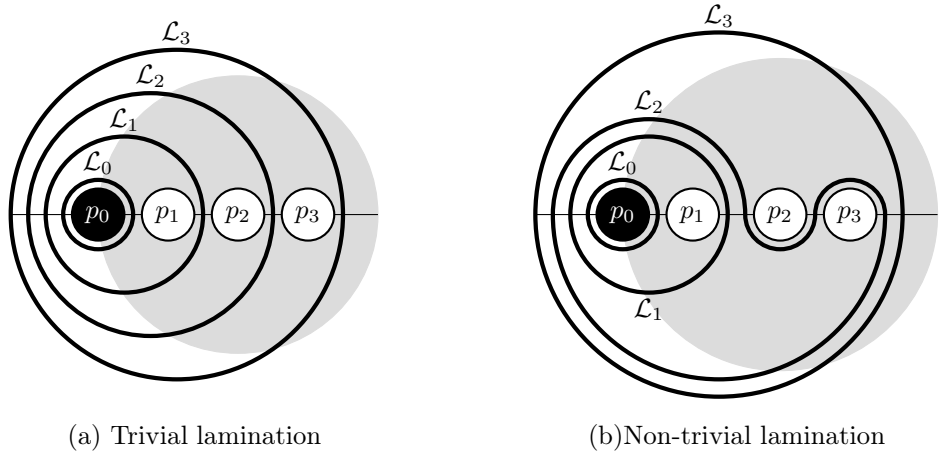

Following Dynnikov and Wiest~\cite{dynnikov:hal-00001267},
we define the norm of a lamination and the \emph{laminated norm} of a braid.

\begin{dfn}{Norm of a lamination and tightness}
Let $\beta \in B_n$ be a braid with $n$ strands,
and let $\calL$ be a lamination representing $\beta$.
The \emph{norm} of $\calL$, denoted by $\|\calL\|$,
is defined as the cardinality of the set $\calL \cap \RR$.

If, among all the laminations that represent $\beta$,
$\calL$ is a lamination with the minimal norm, then we say that
$\calL$ is \emph{tight}.
In this case, we also define the
\emph{laminated norm} of the braid $\beta$, which we denote by $\|\beta\|$, as
the integer $\|\calL\|$ itself.
\label{dfn:laminated-norm}
\end{dfn}

\begin{figure}[!ht]
\begin{center}
\begin{subfigure}[!ht]{0.32\textwidth}
\begin{center}
\begin{tikzpicture}[scale=0.14]
\draw[fill=palegray,draw=palegray] (16,0) circle (8);
\draw[draw=black] (3,0) -- (24,0);
\PUNC{8}

\draw[draw=black,ultra thick] (10,0) arc (0:360:2);
\draw[draw=black,ultra thick] (14,0) arc (0:360:4.5);
\draw[draw=black,ultra thick] (18,0) arc (0:360:7);
\draw[draw=black,ultra thick] (22,0) arc (0:360:9.5);

\punc{12}
\punc{16}
\punc{20}

\node at (10,12.5) {};
\node at (10,-12.5) {};
\end{tikzpicture}
\end{center}
\caption{$\varepsilon$}
\label{fig:lamination:epsilon}
\end{subfigure}
{\tiny~}
\begin{subfigure}[!ht]{0.32\textwidth}
\begin{center}
\begin{tikzpicture}[scale=0.14]
\draw[fill=palegray,draw=palegray] (17,0) circle (9);
\draw[draw=black] (3,0) -- (26,0);
\PUNC{8}

\draw[draw=black,ultra thick] (10,0) arc (0:360:2);
\draw[draw=black,ultra thick] (14,0) arc (0:360:4.5);
\draw[draw=black,ultra thick] (15,0) arc (0:180:5.5);
\draw[draw=black,ultra thick] (23,0) arc (360:180:9.5);
\draw[draw=black,ultra thick] (24,0) arc (0:360:10.5);

\draw[draw=black,ultra thick] (19,0) arc (180:0:2);
\draw[draw=black,ultra thick] (15,0) arc (180:360:2);

\punc{12}
\punc{17}
\punc{21}

\node at (10,12.5) {};
\node at (10,-12.5) {};
\end{tikzpicture}
\end{center}
\caption{$\sigma_2^{-1}$}
\label{fig:lamination:s2(-1)}
\end{subfigure}
{\tiny~}
\begin{subfigure}[!ht]{0.32\textwidth}
\begin{center}
\begin{tikzpicture}[scale=0.14]
\draw[fill=palegray,draw=palegray] (20,0) circle (11);
\draw[draw=black] (4,0) -- (31,0);
\PUNC{9}

\draw[draw=black,ultra thick] (11,0) arc (0:360:2);
\draw[draw=black,ultra thick] (12,0) arc (360:180:3);
\draw[draw=black,ultra thick] (22,0) arc (0:180:8);
\draw[draw=black,ultra thick] (23,0) arc (0:180:9);
\draw[draw=black,ultra thick] (28,0) arc (360:180:11.5);
\draw[draw=black,ultra thick] (29,0) arc (0:360:12.5);

\draw[draw=black,ultra thick] (12,0) arc (180:0:3);
\draw[draw=black,ultra thick] (13,0) arc (180:0:2);
\draw[draw=black,ultra thick] (24,0) arc (180:0:2);

\draw[draw=black,ultra thick] (13,0) arc (180:360:5.5);
\draw[draw=black,ultra thick] (17,0) arc (180:360:3);
\draw[draw=black,ultra thick] (18,0) arc (180:360:2);

\punc{15}
\punc{20}
\punc{26}
\end{tikzpicture}
\end{center}
\caption{$\sigma_2^{-1} \sigma_1$}
\label{fig:lamination:s2(-1).s1}
\end{subfigure}
\end{center}
\caption{Identifying braids with tight laminations}
\label{fig:3-laminations}
\end{figure}
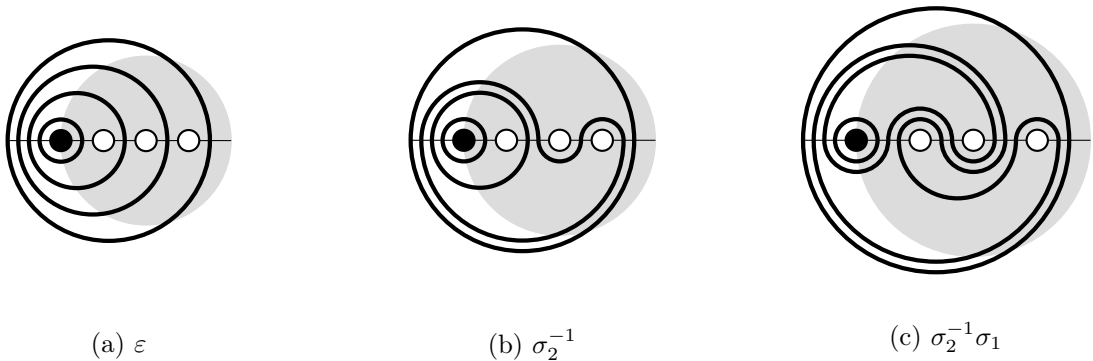

Tight laminations are important, due to the following classical result
(see~\cite{dehornoy2008ordering,dynnikov:hal-00001267,Fenn_orderingthe} for details).
We will henceforth refer to \emph{the} tight lamination of a braid,
as illustrated in Figure~\ref{fig:3-laminations}.

\begin{thm}
Two tight laminations represent the same braid if and only if they are related by an isotopy that
preserves the real axis $\RR$ setwise, and both points $-1$ and $1$ (pointwise).
\label{thm:braid=tight}
\end{thm}

\subsection{Relaxation Normal Form}

Performing relaxation algorithms on laminations of the punctured disk
requires identifying which simplifications might be performed on a lamination,
in order to decrease its complexity.
Such simplifications may arise from considering the notion of \emph{bigon} that we introduce now.

\begin{dfn}{Arcs, bigons and coverage}
Let $\calL$ be a lamination, which consists in closed curves $\calL_0, \ldots, \calL_n$
and in punctures $p_0, \ldots, p_n$.
For each curve $\calL_i$, we call \emph{arcs} of $\calL_i$ the connected components of $\calL_i \setminus \RR$.
By extension, we call \emph{arcs} of the lamination $\calL$ the arcs of its curves.

By construction, every arc $\calA$ of every lamination either lies in the upper half-plane (we say that $\calA$ is an \emph{upper} arc)
or in the lower half-plane (we say that $\calA$ is a \emph{lower} arc),
and it has two distinct endpoints, which both lie on the real axis. Let us denote them by $e_\calA$ and $E_\calA$.
We say that a real point $p$ is \emph{covered} by the arc $\calA$ if $p$ belongs to the open interval $(e_\calA,E_\calA)$.
By extension, for all arcs $\calB$, with endpoints $e_\calB$ and $E_\calB$, we say that
$\calB$ is \emph{covered} by $\calA$ if both $e_\calB$ and $E_\calB$ are covered by $\calA$, i.e. if $e_\calA < e_\calB < E_\calB < E_\calA$.
Finally, we say that $\calA$ is a \emph{bigon} of $\calL$ if $\calA$ does not cover any endpoint of any arc of $\calL$.
\label{dfn:arcs-bigons}
\end{dfn}

\begin{figure}[!ht]
\begin{center}
\begin{tikzpicture}[scale=0.18]
\draw[draw=gray,ultra thick] (28,0) arc (360:180:11.5);

\draw[draw=gray] (4,0) -- (34,0);

\draw[draw=black,ultra thick] (7,0) arc (180:-180:2);
\draw[draw=gray,ultra thick] (6,0) arc (180:0:8) arc (360:180:2) arc (0:180:3) arc (360:180:3);
\draw[draw=gray,ultra thick] (5,0) arc (180:0:14) arc (360:180:2) arc (0:180:3) arc (360:180:3) arc (0:180:2) arc (180:360:5.5) arc (180:0:2) arc (360:180:11.5);
\draw[draw=gray,ultra thick] (4,0) arc (180:-180:15);
\draw[draw=black,ultra thick] (13,0) arc (180:0:2);
\draw[draw=black,ultra thick] (18,0) arc (180:360:2);
\draw[draw=black,ultra thick] (24,0) arc (180:0:2);
\draw[draw=black,ultra thick] (29,0) arc (180:360:2);

\draw[fill=gray,draw=gray,thick] (9,0) circle (1);
\draw[fill=white,draw=gray,thick] (15,0) circle (1);
\draw[fill=white,draw=gray,thick] (20,0) circle (1);
\draw[fill=white,draw=gray,thick] (26,0) circle (1);

\draw[->,draw=black,thick] (9.5,13.5) -- (9.5,-1.6);
\draw[->,draw=black,thick] (8.5,13.5) -- (8.5,2.2);
\draw[->,draw=black,thick] (15,13.5) -- (15,2.3);
\draw[->,draw=black,thick] (20,13.5) -- (20,-1.7);
\draw[->,draw=black,thick] (26,13.5) -- (26,2.3);
\draw[draw=black,thick] (8.5,13) -- (8.5,13.5) -- (26,13.5) -- (26,13);
\draw[draw=black,thick] (17.25,13.5) -- (17.25,15);
\draw[->,draw=black,thick] (31,-15) -- (31,-2.3);

\node[anchor=south] at (17.25,15) {useful bigons};
\node[anchor=north] at (31,-15) {useless bigon};

\draw[->,draw=gray,ultra thick] (35,0) -- (43,0);

\draw[draw=gray] (44,0) -- (74,0);

\draw[draw=gray,ultra thick] (47,0) arc (180:-180:2);
\draw[draw=gray,ultra thick] (46,0) arc (180:0:8) arc (360:180:2) arc (0:180:3) arc (360:180:3);
\draw[draw=gray,ultra thick] (45,0) arc (180:2.0467:14) -- (68.9580,0.5) arc (9.5941:180:3) arc (360:180:3) arc (0:180:2) arc (180:360:5.5) arc (180:0:2) arc (360:180:11.5);
\draw[draw=gray,ultra thick] (44,0) arc (180:-180:15);

\draw[fill=gray,draw=gray,thick] (49,0) circle (1);
\draw[fill=white,draw=gray,thick] (55,0) circle (1);
\draw[fill=white,draw=gray,thick] (60,0) circle (1);
\draw[fill=white,draw=gray,thick] (66,0) circle (1);
\end{tikzpicture}
\end{center}
\caption{Chopping a useless bigon of a lamination}
\label{fig:bigons-1}
\end{figure}
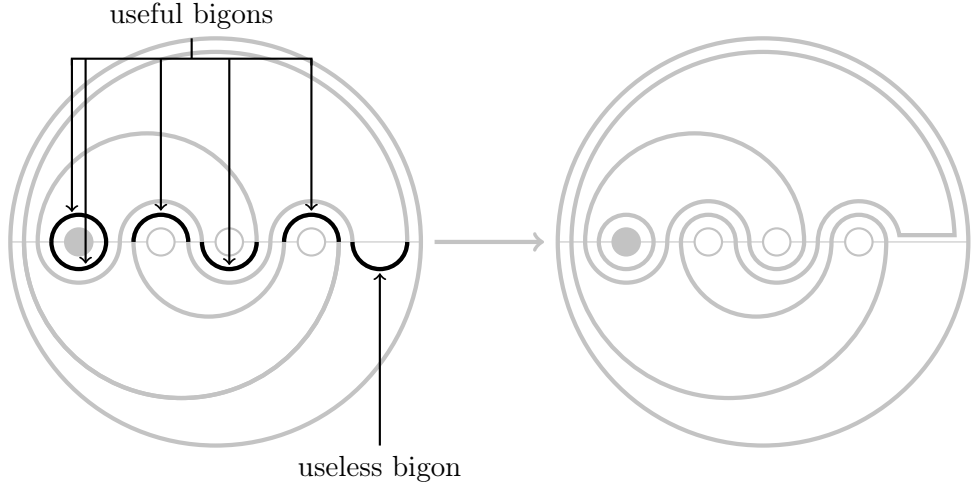

First, observe that a tight lamination $\calL$ should be \emph{transversal} to the real axis,
i.e. each intersection point between a curve of $\calL$ and the real axis
shall be an endpoint of one lower arc of $\calL$ and one upper arc of $\calL$.
Similarly, if some bigon of a lamination $\calL$ does not cover any (fixed or mobile) puncture
of $\calL$, then $\calL$ is certainly not tight, since chopping the useless bigon in question
would reduce the complexity of $\calL$ without changing its isotopy class,
as illustrated in Figure~\ref{fig:bigons-1}.

It turns out that the converse statement holds, and that bigons provide an
intrinsic and easy characterization of tight laminations
(see~\cite{dehornoy2008ordering, Fenn_orderingthe} for details).

\begin{pro}
A lamination $\calL$ is tight if and only if each of its bigons covers at at least one puncture of $\calL$.
\label{pro:tight-bigons}
\end{pro}

In particular, observe that every lamination $\calL$ that is already transversal to the real axis
and whose complexity is finite can be made tight by chopping recursively all its useless bigons.
Hence, using this tightening procedure, we present the principle of relaxation algorithms,
which also relies on the following results,
whose proof is postponed to section~\ref{section:proofs}.

\begin{lem}
Let $\calL$ be a non-trivial tight lamination. At least one mobile puncture $p$ of $\calL$
is covered by a bigon $\calB$ of $\calL$.
Once that puncture is selected, the bigon $\calB$ is unique.
Furthermore, both endpoints of $\calB$ also belong to arcs $\calA_1$ and $\calA_2$ of $\calL$,
distinct from each other, and such that $\calA_1$ and $\calA_2$ do not both cover the fixed puncture $-1$.
\label{lem:easy-stuff}
\end{lem}

As mentioned in the introduction, the class of relaxation algorithms that we present below also
relies on the notion of \emph{sliding braid}.
Sliding braids are defined as the braids of on of the following forms:
\begin{center}
\begin{tabular}{lll}
$\displaystyle[k \curvearrowright \ell] = \sigma_k \ldots \sigma_{\ell-1}$ & \hspace*{20pt} & $\displaystyle[k \curvearrowleft \ell] = \sigma_{\ell-1}^{-1} \ldots \sigma_k^{-1}$ \\
$\displaystyle[k \curvearrowbotright \ell] = \sigma_k^{-1} \ldots \sigma_{\ell-1}^{-1}$ & & $\displaystyle[k \curvearrowbotleft \ell] = \sigma_{\ell-1} \ldots \sigma_k$
\end{tabular}
\end{center}
for $k \leq \ell$. In particular, observe that the trivial braid $\varepsilon$ is a sliding braid (consider the case $k = \ell$ in the above definitions)
and that the relations $[k \curvearrowleft \ell] = [k \curvearrowright \ell]^{-1}$ and $[k \curvearrowbotleft \ell] = [k \curvearrowbotright \ell]^{-1}$ hold for all $k \leq \ell$.

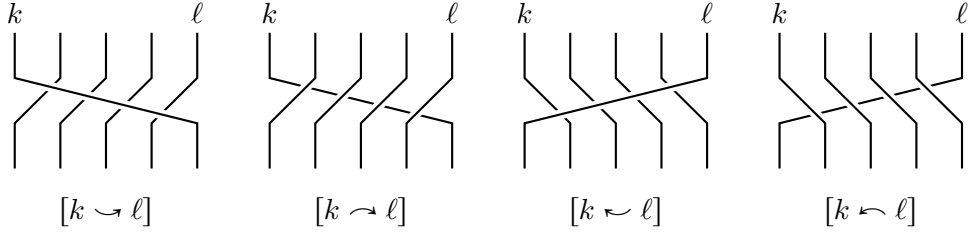
\begin{figure}[!ht]
\begin{center}
\begin{tikzpicture}[scale=0.12]
\draw[draw=black,thick] (0,0) -- (0,5) -- (5,10) -- (5,15);
\draw[draw=black,thick] (5,0) -- (5,5) -- (10,10) -- (10,15);
\draw[draw=black,thick] (10,0) -- (10,5) -- (15,10) -- (15,15);
\draw[draw=black,thick] (15,0) -- (15,5) -- (20,10) -- (20,15);
\draw[draw=white,line width=1.3mm] (20,0) -- (20,5) -- (0,10) -- (0,15);
\draw[draw=black,thick] (20,0) -- (20,5) -- (0,10) -- (0,15);
\node[anchor=south] at (0,15) {$k$};
\node[anchor=south] at (20,15) {$\ell$};
\node[anchor=north] at (10,-2) {$[k \curvearrowbotright \ell]$};
\end{tikzpicture}
~~
\begin{tikzpicture}[scale=0.12]
\draw[draw=black,thick] (20,0) -- (20,5) -- (0,10) -- (0,15);
\draw[draw=white,line width=1.3mm] (0,0) -- (0,5) -- (5,10) -- (5,15);
\draw[draw=white,line width=1.3mm] (5,0) -- (5,5) -- (10,10) -- (10,15);
\draw[draw=white,line width=1.3mm] (10,0) -- (10,5) -- (15,10) -- (15,15);
\draw[draw=white,line width=1.3mm] (15,0) -- (15,5) -- (20,10) -- (20,15);
\draw[draw=black,thick] (0,0) -- (0,5) -- (5,10) -- (5,15);
\draw[draw=black,thick] (5,0) -- (5,5) -- (10,10) -- (10,15);
\draw[draw=black,thick] (10,0) -- (10,5) -- (15,10) -- (15,15);
\draw[draw=black,thick] (15,0) -- (15,5) -- (20,10) -- (20,15);
\node[anchor=south] at (0,15) {$k$};
\node[anchor=south] at (20,15) {$\ell$};
\node[anchor=north] at (10,-2) {$[k \curvearrowright \ell]$};
\end{tikzpicture}
~~
\begin{tikzpicture}[scale=0.12]
\draw[draw=black,thick] (20,0) -- (20,5) -- (15,10) -- (15,15);
\draw[draw=black,thick] (15,0) -- (15,5) -- (10,10) -- (10,15);
\draw[draw=black,thick] (10,0) -- (10,5) -- (5,10) -- (5,15);
\draw[draw=black,thick] (5,0) -- (5,5) -- (0,10) -- (0,15);
\draw[draw=white,line width=1.3mm] (0,0) -- (0,5) -- (20,10) -- (20,15);
\draw[draw=black,thick] (0,0) -- (0,5) -- (20,10) -- (20,15);
\node[anchor=south] at (0,15) {$k$};
\node[anchor=south] at (20,15) {$\ell$};
\node[anchor=north] at (10,-2) {$[k \curvearrowbotleft \ell]$};
\end{tikzpicture}
~~
\begin{tikzpicture}[scale=0.12]
\draw[draw=black,thick] (0,0) -- (0,5) -- (20,10) -- (20,15);
\draw[draw=white,line width=1.3mm] (20,0) -- (20,5) -- (15,10) -- (15,15);
\draw[draw=white,line width=1.3mm] (15,0) -- (15,5) -- (10,10) -- (10,15);
\draw[draw=white,line width=1.3mm] (10,0) -- (10,5) -- (5,10) -- (5,15);
\draw[draw=white,line width=1.3mm] (5,0) -- (5,5) -- (0,10) -- (0,15);
\draw[draw=black,thick] (20,0) -- (20,5) -- (15,10) -- (15,15);
\draw[draw=black,thick] (15,0) -- (15,5) -- (10,10) -- (10,15);
\draw[draw=black,thick] (10,0) -- (10,5) -- (5,10) -- (5,15);
\draw[draw=black,thick] (5,0) -- (5,5) -- (0,10) -- (0,15);
\node[anchor=south] at (0,15) {$k$};
\node[anchor=south] at (20,15) {$\ell$};
\node[anchor=north] at (10,-2) {$[k \curvearrowleft \ell]$};
\end{tikzpicture}
\end{center}
\caption{Braid diagrams of sliding braids}
\label{fig:sliding-braids}
\end{figure}

We show now how to relax a non-trivial tight lamination $\calL$.
First, choose some mobile puncture $p_k$ that belongs to a bigon $\calB$ (which is unique once $p_k$ is chosen).
Then, choose some endpoint $e$ of $\calB$ that also belongs to an arc $\calA$ of $\calL$,
such that $\calA$ does not contain the fixed puncture $-1$, and let $E$ be the other endpoint of $\calA$.
Now, let us slide the puncture $p_k$ along the arc $\calA$,
so that it lands next to the endpoint $E$, and let $\calL'$ be the lamination obtained after $p_k$ has been slid.

Such sliding operations can be performed by letting a \emph{sliding braid} act on the lamination $\calL$.
More precisely, let $i \in \{0,\ldots,n\}$ be the unique integer such that $E$ belongs to the interval $(p_i,p_{i+1})$, with the convention that $p_{n+1} = 1$.
Then $\calL'$ belongs to the isotopy class defined by $\calL \cdot \alpha$, where $\alpha$ is the sliding braid defined by:
\[\alpha = \begin{cases}
[i+1 \curvearrowleft k] & \text{if } i \leq k-1 \text{ and } \calA \text{ is an upper arc} \\
[i+1 \curvearrowbotleft k] & \text{if } i \leq k-1 \text{ and } \calA \text{ is a lower arc} \\
[k \curvearrowright i] & \text{if } i \geq k \text{ and } \calA \text{ is an upper arc} \\
[k \curvearrowbotright i] & \text{if } i \geq k \text{ and } \calA \text{ is a lower arc}
\end{cases}\]

Hence, let $\beta$ be the braid represented by $\calL$, so that $\calL'$ represents the braid $\beta \alpha$.
We say that $\alpha$ is a \emph{relaxing braid} of $\beta$.
Since $\calB$ has become a useless bigon of $\calL'$, it follows that $\calL'$ is not tight, and that
$\|\beta\| = \|\calL\| = \|\calL'\| > \|\beta \alpha\|$.
A natural idea is then to tighten the lamination $\calL'$, relax it, tighten it again, and so on, until we obtain the trivial lamination $\bL$, if we ever do.
This amounts to choosing sliding braids $\alpha_1, \alpha_2, \ldots$ such that
each braid $\alpha_j$ is a relaxing braid of $\beta \alpha_1 \ldots \alpha_{j-1}$.

Due to the strict inequalities $\|\beta\| > \|\beta \alpha_1\| > \|\beta \alpha_1 \alpha_2\| > \ldots$,
it is certain that we will indeed obtain the trivial lamination after a finite number of tightenings and relaxations.
At that point, we will have chosen relaxing braids $\alpha_1, \ldots, \alpha_k$, so that
$\varepsilon = \beta \alpha_1 \ldots \alpha_k$.
It follows that $\beta$ can be factored as the product $\alpha_k^{-1} \ldots \alpha_2^{-1} \alpha_1^{-1}$ of sliding braids.

Of course, the puncture $p_k$ and the endpoint $e$ (and therefore the braid $\alpha_1$) picked at each step of the relaxation process
might have been chosen in numerous ways.
We describe now how the puncture $p_k$ and the endpoint $e$ are chosen in order to obtain the relaxation normal form.

\begin{dfn}{Relaxation normal form}
Let $\calL$ be a non-trivial tight lamination, and let $\beta$ be the braid represented by $\calL$.
Let $p_k$ be the rightmost mobile puncture of $\calL$ covered by some bigon $\calB$ of $\calL$.
We call this bigon the \emph{rightmost bigon} of $\calL$.
Let also $e_1 < e_2$ be the two endpoints of $\calB$, and let $\calA_1$ and $\calA_2$ be the
arcs of $\calL$ with which the bigon $\calB$ shares its endpoints $e_1$ and $e_2$, respectively.

If $\calA_2$ does not cover the fixed puncture $-1$, then we slide $p_k$ along the arc $\calA_2$;
otherwise, we slide $p_k$ along the arc $\calA_1$. Let $\calL'$ be the lamination obtained after $p_k$ has been slid.
We call \emph{right-relaxing braid} of $\beta$, and denote by $\bR(\beta)$, the braid $\sigma$ such that $\calL' = \calL \cdot \sigma$.

Furthermore, let $\beta_1, \ldots, \beta_k$ be the unique sequence such that
$\beta_1 = \beta$, $\beta_k = \varepsilon$ and, for all $i \leq k-1$,
$\beta_i$ is a non-trivial braid such that $\beta_{i+1} = \beta_i \bR(\beta_i)$.
We call \emph{relaxation normal form} of $\beta$, and denote by $\RNF(\beta)$,
the word
\[\bR(\beta_{k-1})^{-1} \cdot \ldots \cdot \bR(\beta_1)^{-1},\] where
$\cdot$ is the concatenation symbol.
\label{dfn:RNF}
\end{dfn}

Figure~\ref{fig:right-relax} illustrates the right-relaxation procedure in the case of the braid
$\beta = \sigma_2 \sigma_1 \sigma_3^{-1}$, whose relaxation normal form turns out to be
$\RNF(\beta) = [2 \curvearrowright 3] \cdot [1 \curvearrowright 3] \cdot [2 \curvearrowbotright 4] = \sigma_2 \cdot \sigma_1 \sigma_2 \cdot \sigma_2^{-1} \sigma_3^{-1}$.

\begin{figure}[!ht]
\begin{center}
\begin{tikzpicture}[scale=0.17]
\draw[draw=gray] (0,0) -- (34,0);

\draw[draw=gray,ultra thick] (4,0) arc (180:-180:2);
\draw[draw=gray,ultra thick] (3,0) arc (180:0:9) arc (360:180:2) arc (0:180:4) arc (360:180:3);
\draw[draw=gray,ultra thick] (2,0) arc (180:0:10) arc (180:360:3) arc (180:0:2) arc (360:180:9) arc (0:180:2) arc (360:180:4);
\draw[draw=gray,ultra thick] (1,0) arc (180:0:11) arc (180:360:2) arc (180:0:3) arc (360:180:16);
\draw[draw=gray,ultra thick] (0,0) arc (180:-180:17);

\draw[draw=black,ultra thick,->] (30,0) arc (360:180:7.25);

\draw[fill=black,draw=black,thick] (6,0) circle (1);
\draw[fill=black!15,draw=black,thick] (12,0) circle (1);
\draw[fill=black!60,draw=black,thick] (19,0) circle (1);
\draw[fill=white,draw=black,thick] (25,0) circle (1);
\draw[fill=black!35,draw=black,thick] (30,0) circle (1);

\draw[draw=black,ultra thick] (28,0) arc (180:0:2);

\draw[->,draw=black,thick] (30,17) -- (30,2.3);

\node[anchor=south] at (22.75,-6) {sliding};
\node[anchor=south] at (22.75,-7.3) {move};
\node[anchor=south] at (28,16.5) {rightmost bigon};

\draw[->,draw=gray,ultra thick] (35,0) -- (42,0);
\node[anchor=north] at (38.5,0) {$\alpha_1$};
\node[anchor=north] at (38.5,-2) {=};
\node[anchor=north] at (38.5,-3.3) {$[2 \curvearrowbotleft 4]$};

\draw[draw=gray] (43,0) -- (73,0);

\draw[draw=gray,ultra thick] (47,0) arc (180:-180:2);
\draw[draw=gray,ultra thick] (46,0) arc (180:0:10.5) arc (360:180:2) arc (0:180:4.5) arc (360:180:4);
\draw[draw=gray,ultra thick] (45,0) arc (180:0:11.5) arc (360:180:4.5) arc (0:180:2) arc (360:180:5);
\draw[draw=gray,ultra thick] (44,0) arc (180:-180:12.5);
\draw[draw=gray,ultra thick] (43,0) arc (180:-180:15);

\draw[draw=black,ultra thick,->] (65,0) arc (0:180:6.25);

\draw[fill=black,draw=black,thick] (49,0) circle (1);
\draw[fill=black!15,draw=black,thick] (57,0) circle (1);
\draw[fill=black!35,draw=black,thick] (61,0) circle (1);
\draw[fill=black!60,draw=black,thick] (65,0) circle (1);
\draw[fill=white,draw=black,thick] (71,0) circle (1);

\draw[draw=black,ultra thick] (63,0) arc (180:360:2);

\draw[->,draw=gray,ultra thick] (58,-16) -- (58,-23);
\node[anchor=west] at (58,-19.5) {$\alpha_2 = [1 \curvearrowleft 3]$};

\draw[draw=gray] (44,-38) -- (72,-38);

\draw[draw=gray,ultra thick] (48,-38) arc (180:-180:2);
\draw[draw=gray,ultra thick] (47,-38) arc (180:-180:4.5);
\draw[draw=gray,ultra thick] (46,-38) arc (180:0:10.5) arc (360:180:2) arc (0:180:2) arc (360:180:6.5);
\draw[draw=gray,ultra thick] (45,-38) arc (180:-180:11.5);
\draw[draw=gray,ultra thick] (44,-38) arc (180:-180:14);

\draw[draw=black,ultra thick,->] (65,-38) arc (0:180:3.75);

\draw[fill=black,draw=black,thick] (50,-38) circle (1);
\draw[fill=black!60,draw=black,thick] (54,-38) circle (1);
\draw[fill=black!15,draw=black,thick] (61,-38) circle (1);
\draw[fill=black!35,draw=black,thick] (65,-38) circle (1);
\draw[fill=white,draw=black,thick] (70,-38) circle (1);

\draw[draw=black,ultra thick] (63,-38) arc (180:360:2);

\draw[<-,draw=gray,ultra thick] (36,-38) -- (43,-38);
\node[anchor=north] at (39.5,-38) {$\alpha_3$};
\node[anchor=north] at (39.5,-40) {=};
\node[anchor=north] at (39.5,-41.3) {$[2 \curvearrowleft 3]$};

\draw[draw=gray] (11,-38) -- (35,-38);

\draw[draw=gray,ultra thick] (15,-38) arc (180:-180:2);
\draw[draw=gray,ultra thick] (14,-38) arc (180:-180:4.5);
\draw[draw=gray,ultra thick] (13,-38) arc (180:-180:7);
\draw[draw=gray,ultra thick] (12,-38) arc (180:-180:9.5);
\draw[draw=gray,ultra thick] (11,-38) arc (180:-180:12);

\draw[fill=black,draw=black,thick] (17,-38) circle (1);
\draw[fill=black!60,draw=black,thick] (21,-38) circle (1);
\draw[fill=black!35,draw=black,thick] (25,-38) circle (1);
\draw[fill=black!15,draw=black,thick] (29,-38) circle (1);
\draw[fill=white,draw=black,thick] (33,-38) circle (1);
\end{tikzpicture}
\end{center}
\caption{Applying the right-relaxation procedure on the braid $\sigma_2 \sigma_1 \sigma_3^{-1}$}
\label{fig:right-relax}
\end{figure}
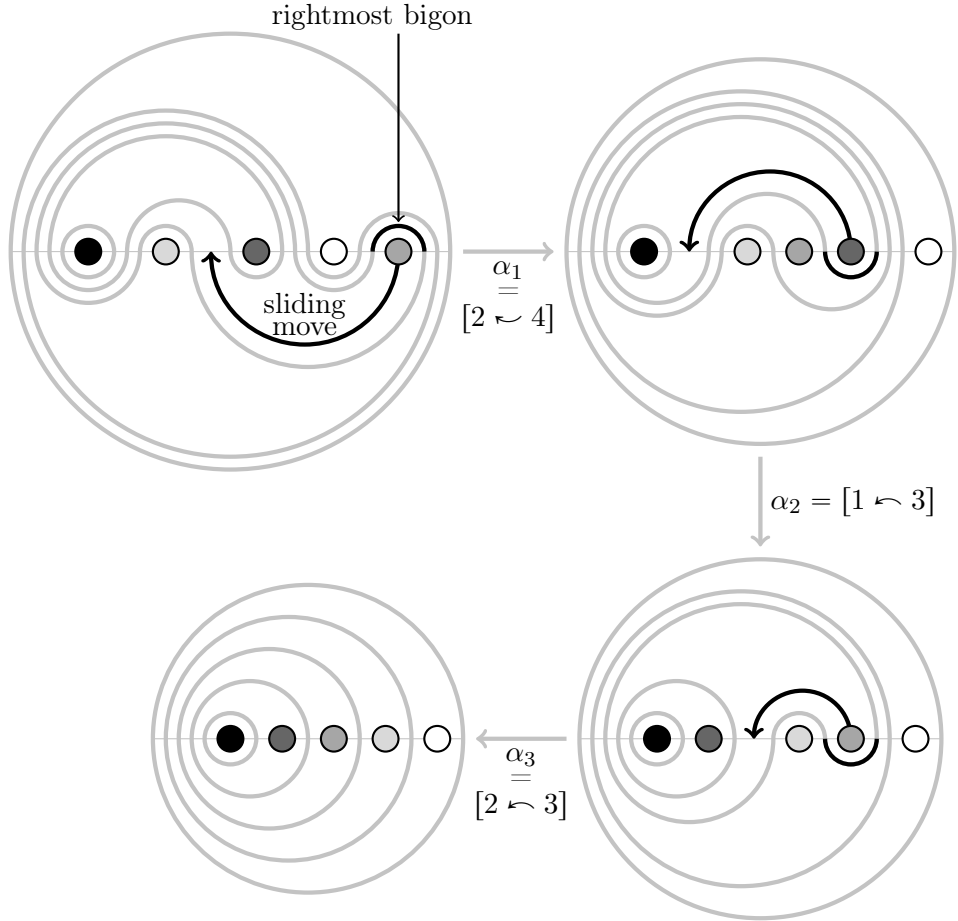

A crucial point is that, due to Theorem~\ref{thm:braid=tight},
the puncture $p_k$ and the arc $\calA_1$ or $\calA_2$
along which we slide $p_k$ do \emph{not} depend on which tight lamination $\calL$
representing $\beta$ we started with.
It follows that the language $\{\RNF(\beta) : \beta \in B_n\}$
of the relaxation normal form is prefix-closed.

This prefix-closure property offers numerous possibilities.
The relaxation normal form induces a tree, whose nodes are the words in relaxation normal form,
and where the children of a word $\bw$ are the words of the type $\bw \cdot \lambda$
(for some sliding braid $\lambda$) that are in normal form.
Hence, this tree is a sub-graph of the oriented Cayley graph of $B_n$ for the sliding braids.

This is useful for studying random processes:
for instance, we may define a random walk by jumping from one word in relaxation normal form
to one of its children that we choose at random.
Another example is testing if a word is in relaxation normal form:
it is possible to proceed by induction,
checking only whether, for some relaxation normal word $\bw$
and some sliding braid $\lambda$, the word $\bw \cdot \lambda$ is in relaxation normal form.
We will use the latter property when proving that the set $\{\RNF(\beta) : \beta \in B_n\}$ is regular.

\subsection{Recognizing the Relaxation Normal Form}
\label{subsection:sketch}

Based on the previous discussion, our goal is now to devise a simple criterion
for deciding, given a braid $\beta \in B_n$ and a sliding braid $\lambda$,
whether the relation $\lambda^{-1} = \bR(\beta \lambda)$ holds.
More precisely, since we aim at constructing an automaton recognizing the relaxation normal form,
we would like to extract from the braid $\beta$ some amount of information $\iota(\beta)$
such that knowing $\iota(\beta)$ and $\lambda$ would be sufficient to decide whether $\lambda^{-1} = \bR(\beta \lambda)$
and, if yes, to compute the information $\iota(\beta \lambda)$.

Since we are only interseted in finite-state automata, the information $\iota(\beta)$
should be \emph{succinct}, i.e. the set $\{\iota(\beta) : \beta \in B_n\}$ should be finite.
Indeed, this set would then be the state set of our automaton, whose edges
would be of the form $\iota(\beta) \xrightarrow{\lambda} \iota(\beta \lambda)$
for all those braids $\beta \in B_n$ and sliding braids $\lambda$ such that $\lambda^{-1} = \bR(\beta \lambda)$.

We describe now an adequately chosen amount of information,
and state the main results that pave the way to constructing
an automaton recognizing the relaxation normal form.
All the results stated below will be proved in section~\ref{section:proofs}.

\begin{dfn}{Neighbor points and neighbor arcs}
Let $\calL$ be a tight lamination, and let $\calL_\RR$ denote the set of all intersection points between
the real axis and the curves of $\calL$.
Let $p$ be a point on the open real interval $(\min \calL_\RR, \max \calL_\RR)$.
We call \emph{left neighbor} of $p$ in $\calL$ the point $p_\calL^- := \max \{z \in \calL_\RR: z < p\}$ and
\emph{right neighbor} of $p$ in $\calL$ the point $p_\calL^+ := \min \{z \in \calL_\RR: z > p\}$.

The point $p_\calL^-$ belongs to two arcs of $\calL$.
We denote the upper one by $\upA_-(p,\calL)$, and we call it \emph{left upper arc} of $p$ in $\calL$.
We denote the lower one by $\lowA_-(p,\calL)$, and we call it \emph{left lower arc} of $p$ in $\calL$.
Similarly, the point $p_\calL^+$ belongs to two arcs of $\calL$.
We denote the upper one by $\upA_+(p,\calL)$, and we call it \emph{right upper arc} of $p$ in $\calL$.
We denote the lower one by $\lowA_+(p,\calL)$, and we call it \emph{right lower arc} of $p$ in $\calL$.
These four arcs are called \emph{neighbor arcs} of $p$ in $\calL$.
\label{dfn:neighbor-arcs}
\end{dfn}

Figure~\ref{fig:neighbor-arcs} shows a tight lamination
in which a puncture $p$, the neighbor points of $p$ and the neighbor arcs of $p$ have been highlighted.

\begin{figure}[!ht]
\begin{center}
\begin{tikzpicture}[scale=0.22]
\draw[draw=gray] (4,0) -- (27,0);

\draw[fill=gray,draw=gray,thick] (9,0) circle (1);

\draw[draw=gray,ultra thick] (11,0) arc (0:360:2);
\draw[draw=gray,ultra thick] (25,0) arc (0:180:9.5);
\draw[draw=gray,ultra thick] (26,0) arc (0:180:10.5);
\draw[draw=gray,ultra thick] (27,0) arc (0:360:11.5);

\draw[draw=gray,ultra thick] (20,0) arc (180:360:3);
\draw[draw=gray,ultra thick] (21,0) arc (180:360:2);

\draw[fill=white,draw=gray,thick] (14,0) circle (1);
\draw[fill=white,draw=gray,thick] (18,0) circle (1);
\draw[fill=white,draw=gray,thick] (23,0) circle (1);

\draw[draw=black,thick] (28,8) -- (17,8) -- (17,4.5);
\draw[draw=black,ultra thick] (12,0) arc (180:0:4.5);
\draw[draw=black,thick] (28,3.5) -- (18,3.5) -- (18,2);
\draw[draw=black,ultra thick] (16,0) arc (180:0:2);
\draw[draw=black,thick] (28,-8) -- (10.5,-8) -- (10.5,-5.5);
\draw[draw=black,ultra thick] (16,0) arc (360:180:5.5);
\draw[draw=black,thick] (28,-4) -- (9,-4) -- (9,-3);
\draw[draw=black,ultra thick] (12,0) arc (360:180:3);

\draw[draw=black,thick] (3,2.5) -- (12,2.5) -- (12,0);
\draw[draw=black,thick] (3,6) -- (14,6) -- (14,0);
\draw[draw=black,thick] (3,9.5) -- (16,9.5) -- (16,0);

\dvertex{12}
\dvertex{14}
\dvertex{16}
\node[anchor=west] at (0,2.5) {$p_\calL^-$};
\node[anchor=west] at (0,6) {$p$};
\node[anchor=west] at (0,9.5) {$p_\calL^+$};
\node[anchor=west] at (28,8) {$\upA_-(p,\calL)$};
\node[anchor=west] at (28,3.5) {$\upA_+(p,\calL)$};
\node[anchor=west] at (28,-4) {$\lowA_-(p,\calL)$};
\node[anchor=west] at (28,-8) {$\lowA_+(p,\calL)$};

\end{tikzpicture}
\end{center}
\caption{A puncture, its neighbor points and its neighbor arcs}
\label{fig:neighbor-arcs}
\end{figure}
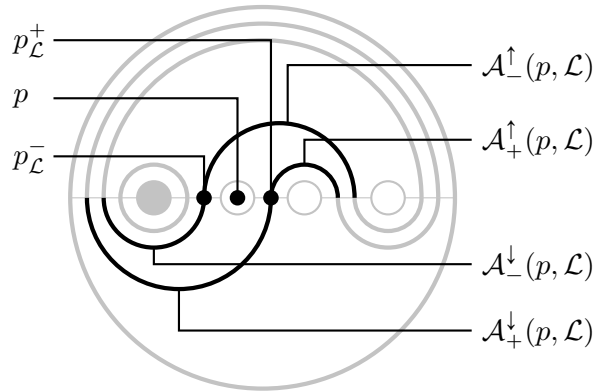

\begin{dfn}{Shadow and extended shadow}
Let $\calL$ be a tight lamination that represents some braid $\beta$, and let
$p_0, \ldots, p_n$ be the (fixed and mobile) punctures of $\calL$.
Let $p_k$ be the rightmost mobile puncture of $\calL$ covered by a bigon.
Finally, let $\calA$ be an arc of $\calL$, and let $\calB$ be the arc with which $\calA$ shares its rightmost endpoint.
We call \emph{shadow} of $\calA$, and denote by $\pi_\calL(\calA)$, the set $\{i : p_i$ is covered by the arc $\calA\}$.
Note that the shadow $\pi_\calL(\calA)$ does indeed depend on $\calL$, since it indicates which punctures $p_i$ of $\calL$ are covered by $\calA$.
We also call \emph{extended shadow} of $\calA$, and denote by $\pi_\calL^2(\calA)$, the pair defined by:
\[\pi_\calL^2(\calA) = \begin{cases}
(\pi_\calL(\calA),\pi_\calL(\calB)) & \text{if } k \in \pi_\calL(\calA) \\
(\pi_\calL(\calA),\emptyset) & \text{if } k \notin \pi_\calL(\calA)
\end{cases}\]

Then, for $1 \leq i \leq n$, $\diamond \in \{+,-\}$ and $\vartheta \in \{\downarrow,\uparrow\}$,
we denote by $\pi_\beta(i,\diamond,\vartheta)$ the shadow of the arc $\calA^\vartheta_\diamond(p_i,\calL)$ in $\calL$,
and we denote by $\pi_\beta^2(i,\diamond,\vartheta)$ the extended shadow of the arc $\calA^\vartheta_\diamond(p_i,\calL)$ in $\calL$.
By abuse of notation, we define the \emph{shadow} of $\beta$ as the mapping
\begin{center}
\begin{tabular}{lllll}
$\pi_\beta$ & $:$ & $\{1,\ldots,n\} \times \{+,-\} \times \{\downarrow,\uparrow\}$ & $\mapsto$ & $2^{\{0,\ldots,n\}}$ \\
& & $(i,\diamond,\vartheta)$ & $\mapsto$ & $\pi_\beta(i,\diamond,\vartheta)$
\end{tabular}
\end{center}
and the \emph{extended shadow} of $\beta$ as the mapping
\begin{center}
\begin{tabular}{lllll}
$\pi_\beta^2$ & $:$ & $\{1,\ldots,n\} \times \{+,-\} \times \{\downarrow,\uparrow\}$ & $\mapsto$ & $2^{\{0,\ldots,n\}} \times 2^{\{0,\ldots,n\}}$. \\
& & $(i,\diamond,\vartheta)$ & $\mapsto$ & $\pi_\beta^2(i,\diamond,\vartheta)$
\end{tabular}
\end{center}
\end{dfn}

Note that the above definitions of the neighbor points and arcs of a point,
as well as the shadow and the extended shadow of an arc, depend on which lamination $\calL$ is considered.
Hence, we prefer mentioning explicitly which lamination $\calL$ is considered, although
this choice leads to heavier notation. In particular, in the proofs of section~\ref{section:proofs},
we may consider some arc or point relatively to \emph{several} laminations,
and in that case mentioning which lamination we consider will be required.

\begin{figure}[!ht]
\begin{center}
\begin{tikzpicture}[scale=0.22]

\draw[draw=gray,ultra thick] (3,0) arc (180:-180:2);
\draw[draw=gray,ultra thick] (2,0) arc (180:-180:4.5);
\draw[draw=gray,ultra thick] (1,0) arc (180:0:5.5) arc (180:360:2) arc (180:0:2) arc (360:180:9.5);
\draw[draw=gray,ultra thick] (0,0) arc (180:-180:10.5);

\draw[draw=black,ultra thick] (2,0) arc (180:0:4.5);
\draw[draw=black,ultra thick] (16,0) arc (180:0:2) arc (360:180:9.5);

\draw[draw=black] (0,0) -- (21,0);

\PUNCTURE{5}{0}
\puncture{9}{1}
\puncture{14}{2}
\puncture{18}{3}

\node[anchor=north] at (10.5,-10.5) {Lamination $\calL$};

\draw[draw=black,thick] (6.5,4.5) -- (6.5,8.5) -- (22,8.5);
\draw[draw=black,thick] (18,2) -- (18,2.5) -- (22,2.5);
\draw[draw=black,thick] (10.5,-9.5) -- (10.5,-5) -- (22,-5);

\node[anchor=west] at (22,8.5) {$\calA_1$};
\node[anchor=west] at (22,2.5) {$\calA_2$};
\node[anchor=west] at (22,-5) {$\calA_3$};

\node[anchor=west] at (27,8.5) {$\pi_\calL(\calA_1) = \{0,1\}$};
\node[anchor=west] at (27,2.5) {$\pi_\calL(\calA_2) = \{3\}$};
\node[anchor=west] at (27,-5) {$\pi_\calL(\calA_3) = \{0,1,2,3\}$};
\node[anchor=west] at (27,6.5) {$\pi_\calL^2(\calA_1) = (\{0,1\},\emptyset)$};
\node[anchor=west] at (27,0.5) {$\pi_\calL^2(\calA_2) = (\{3\},\{0,1,2,3\})$};
\node[anchor=west] at (27,-7) {$\pi_\calL^2(\calA_3) = (\{0,1,2,3\},\{3\})$};

\node[anchor=north] at (22.5,-13) {\small
\begin{tabular}{|c|c|c|c|c|c|c|}
\multicolumn{2}{c}{} & \multicolumn{3}{c}{$\pi_\beta(i,\diamond,\vartheta)$} & \multicolumn{2}{c}{} \\
\cline{3-5}
\multicolumn{2}{c|}{} & \multicolumn{3}{|c|}{$i$} & \multicolumn{2}{|c}{} \\
\cline{3-5}
\multicolumn{2}{c|}{} & 1 & 2 & 3 & \multicolumn{2}{|c}{} \\
\hline
\multirow{4}{*}{$\diamond$} & $-$ & $\{0\}$ & $\{0,1\}$ & $\{3\}$ & \multirow{2}{*}{$\uparrow$} & \multirow{4}{*}{$\vartheta$} \\
\cline{2-5}
& $+$ & $\{0,1\}$ & $\{3\}$ & $\{3\}$ & & \\
\cline{2-6}
& $-$ & $\{0\}$ & $\{2\}$ & $\{2\}$ & \multirow{2}{*}{$\downarrow$} & \\
\cline{2-5}
& $+$ & $\{0,1\}$ & $\{2\}$ & $\{0,1,2,3\}$ & & \\
\hline
\multicolumn{7}{c}{} \\
\multicolumn{2}{c}{} & \multicolumn{3}{c}{$\pi_\beta^2(i,\diamond,\vartheta)$} & \multicolumn{2}{c}{} \\
\cline{3-5}
\multicolumn{2}{c|}{} & \multicolumn{3}{|c|}{$i$} & \multicolumn{2}{|c}{} \\
\cline{3-5}
\multicolumn{2}{c|}{} & 1 & 2 & 3 & \multicolumn{2}{|c}{} \\
\hline
\multirow{4}{*}{$\diamond$} & $-$ & $(\{0\},\emptyset)$ & $(\{0,1\},\emptyset)$ & $(\{3\},\{0,1,2,3\})$ & \multirow{2}{*}{$\uparrow$} & \multirow{4}{*}{$\vartheta$} \\
\cline{2-5}
& $+$ & $(\{0,1\},\emptyset)$ & $(\{3\},\{0,1,2,3\})$ & $(\{3\},\{0,1,2,3\})$ & & \\
\cline{2-6}
& $-$ & $(\{0\},\emptyset)$ & $(\{2\},\emptyset)$ & $(\{2\},\emptyset)$ & \multirow{2}{*}{$\downarrow$} & \\
\cline{2-5}
& $+$ & $(\{0,1\},\emptyset)$ & $(\{2\},\emptyset)$ & $(\{0,1,2,3\},\{3\})$ & & \\
\hline
\end{tabular}
};
\end{tikzpicture}
\end{center}
\caption{A tight lamination and its (extended) shadow}
\label{fig:reduced-diagram}
\end{figure}

Figure~\ref{fig:reduced-diagram} represents the tight lamination
$\calL$ associated with the braid $\beta = \sigma_2^{-1}$.
The shadows and the extended shadows of three arcs $\calA_1$, $\calA_2$ and $\calA_3$ are indicated on the right.
The shadow and the extended shadow of $\beta$ are indicated at the bottom part of the picture.
For instance, it is indicated that $\pi^2_\beta(3,+,\downarrow) = (\{0,1,2,3\},\{3\})$.
Indeed, the rightmost index of $\calL$ is $3$, the shadow of $\lowA_+(p_3)$ in $\calL$ is $\{0,1,2,3\}$,
and $\lowA_+(p_3)$ shares its right endpoint with an arc of shadow $\{3\}$ in $\calL$.

The shadow $\pi_\beta$ of a braid $\beta$ might be a suitable candidate
for being the succinct amount of information $\iota(\beta)$ mentioned at the beginning of section~\ref{subsection:sketch}.
Indeed, the set $\{\iota(\beta) : \beta \in B_n\}$ is clearly finite,
and we prove below that, for all braids $\beta \in B_n$ and all sliding braids $\lambda$,
knowing $\pi_\beta$ and $\lambda$ is sufficient to determine whether the relation
$\lambda^{-1} = \bR(\beta \lambda)$ holds.

We first show that not all sliding braids may appear in the relaxation normal form.

\begin{lem}
For all non-trivial braids $\beta \in B_n$, the sliding braid $\bR(\beta)$ belongs to the set
of \emph{left-oriented sliding braids}, which defined as the set
\[\{[k \curvearrowleft \ell] : 1 \leq k < \ell \leq n\} \cup \{[k \curvearrowbotleft \ell] : 1 \leq k < \ell \leq n\}.\]
\label{lem:right-oriented}
\end{lem}

A direct consequence of Lemma~\ref{lem:right-oriented} is that
the relation $\lambda^{-1} = \bR(\beta \lambda)$ may hold only if $\lambda$ is a \emph{right-oriented sliding braid},
i.e. an element of the set
\[\{[k \curvearrowright \ell] : 1 \leq k < \ell \leq n\} \cup \{[k \curvearrowbotright \ell] : 1 \leq k < \ell \leq n\}.\]
Moreover, for all right-oriented sliding braids $\lambda$,
there exists a simple criterion for deciding whether $\lambda^{-1} = \bR(\beta \lambda)$.

\begin{pro}
Let $\beta \in B_n$ be some braid, and let $k$ and $\ell$ be integers such that $0 < k < \ell \leq n$.
The equality $\bR(\beta [k \curvearrowright \ell]) = [k \curvearrowleft \ell]$
holds if and only if all of the following conditions are fulfilled:
\begin{enumerate}
\item $\pi_\beta(k,+,\downarrow) \neq \{k\}$;
\item either $\pi_\beta(k,+,\uparrow) = \{0,\ldots,k\}$ or $\pi_\beta(k,-,\uparrow) \subseteq \{k,\ldots,\ell-1\}$;
\item for all $i \in \{\ell+2,\ldots,n\}$, $\ell+1 \in \pi_\beta(i,-,\uparrow) \cap \pi_\beta(i,-,\downarrow)$;
\item if $\ell < n$, then $k \in \pi_\beta(\ell+1,+,\uparrow)$;
\item if $\ell < n$, then either $\pi_\beta(\ell+1,+,\downarrow) \neq \{\ell+1\}$ or
$\pi_\beta(\ell+1,-,\uparrow) \subseteq \{k+1,\ldots,\ell\}$.
\end{enumerate}
Analogous conditions, where upper arcs are replaced by lower arcs and vice-versa, characterize whether the equality
$\bR(\beta [k \curvearrowbotright \ell]) = [k \curvearrowbotleft \ell]$ holds.
\label{pro:caracterisation-extension}
\end{pro}

These conditions are illustrated in Figure~\ref{fig:caracterisation-extension}.

\begin{figure}[!ht]
\begin{center}
\begin{tikzpicture}[scale=0.27]
\draw[draw=black,ultra thick] (-7,-30.5) -- (-7,4) -- (25,4) -- (25,-30.5) -- cycle;
\draw[draw=black,ultra thick] (-7,-1.5) -- (25,-1.5);
\draw[draw=black,ultra thick] (-7,-8.5) -- (25,-8.5);
\draw[draw=black,ultra thick] (-7,-17.5) -- (25,-17.5);
\draw[draw=black,ultra thick] (-7,-24.5) -- (25,-24.5);

\draw[draw=black,ultra thick] (-7,4) -- (-7,2.5) -- (-5.5,2.5) -- (-5.5,4) -- cycle;
\node at (-6.25,3.25) {1.};
\draw[draw=black,ultra thick] (-7,-1.5) -- (-7,-3) -- (-5.5,-3) -- (-5.5,-1.5) -- cycle;
\node at (-6.25,-2.25) {2.};
\draw[draw=black,ultra thick] (-7,-10) -- (-7,-8.5) -- (-5.5,-8.5) -- (-5.5,-10) -- cycle;
\node at (-6.25,-9.25) {3.};
\draw[draw=black,ultra thick] (-7,-17.5) -- (-7,-19) -- (-5.5,-19) -- (-5.5,-17.5) -- cycle;
\node at (-6.25,-18.25) {4.};
\draw[draw=black,ultra thick] (-7,-24.5) -- (-7,-26) -- (-5.5,-26) -- (-5.5,-24.5) -- cycle;
\node at (-6.25,-25.25) {5.};

\node at (5,1.5) {\Huge$\neg$};

\draw[draw=black] (7,1.5) -- (13,1.5);
\draw[fill=white,draw=black,thick] (10,1.5) circle (1.5);
\node at (10,1.5) {\small$p_k$};
\draw[draw=black,ultra thick] (8,1.5) arc (180:360:2);

\draw[draw=black] (-5.5,-6) -- (8.5,-6);
\draw[fill=white,draw=black,thick] (-1.5,-6) circle (1.5);
\node at (-1.5,-6) {\small$p_0$};
\draw[fill=white,draw=black,thick] (5.5,-6) circle (1.5);
\node at (5.5,-6) {\small$p_k$};
\draw[draw=black,ultra thick] (-4.5,-6) arc (180:90:3.5) -- (4,-2.5) arc (90:0:3.5);

\node at (10.5,-6) {\Huge$\vee$};

\draw[draw=black] (12.5,-6) -- (23.5,-6);
\draw[fill=white,draw=black,thick] (15.5,-6) circle (1.5);
\node at (15.5,-6) {\small$p_k$};
\draw[fill=white,draw=black,thick] (21.5,-6) circle (1.5);
\node at (21.5,-6) {\small$p_\ell$};
\draw[draw=black,ultra thick] (13.5,-6) arc (180:0:2.5);

\draw[draw=black] (-2.5,-13) -- (20.5,-13);
\draw[fill=white,draw=black,thick] (-0.5,-13) circle (1.5);
\node at (-0.5,-13) {\small$p_{\ell+1}$};
\draw[fill=white,draw=black,thick] (4.5,-13) circle (1.5);
\node at (4.5,-13) {\small$p_{\ell+2}$};
\draw[fill=white,draw=black,thick] (9.5,-13) circle (1.5);
\node at (9.5,-13) {\small$p_{\ell+3}$};
\draw[fill=white,draw=black,thick] (18.5,-13) circle (1.5);
\node at (18.5,-13) {\small$p_n$};
\node[anchor=south] at (13.75,-13.25) {$\cdots$};
\draw[draw=black,ultra thick] (-2.5,-11) -- (0.5,-11) arc (90:-90:2) -- (-2.5,-15);
\draw[draw=black,ultra thick] (-2.5,-10.5) -- (5,-10.5) arc (90:-90:2.5) -- (-2.5,-15.5);
\draw[draw=black,ultra thick] (-2.5,-9.5) -- (13,-9.5) arc (90:-90:3.5) -- (-2.5,-16.5);

\draw[draw=black] (1,-22) -- (17,-22);
\draw[fill=white,draw=black,thick] (5,-22) circle (1.5);
\node at (5,-22) {\small$p_k$};
\draw[fill=white,draw=black,thick] (14,-22) circle (1.5);
\node at (14,-22) {\small$p_{\ell+1}$};
\draw[draw=black,ultra thick] (2,-22) arc (180:90:3.5) -- (12.5,-18.5) arc (90:0:3.5);

\draw[draw=black] (-3.5,-27.5) -- (2.5,-27.5);
\draw[fill=white,draw=black,thick] (-0.5,-27.5) circle (1.5);
\node at (-0.5,-27.5) {\small$p_{\ell+1}$};
\draw[draw=black,ultra thick] (-2.5,-27.5) arc (180:360:2);

\node at (5.5,-27.5) {\Huge$\Rightarrow$};

\draw[draw=black] (8.5,-27.5) -- (21.5,-27.5);
\draw[fill=white,draw=black,thick] (10.5,-27.5) circle (1.5);
\node at (10.5,-27.5) {\small$p_k$};
\draw[fill=white,draw=black,thick] (19.5,-27.5) circle (1.5);
\node at (19.5,-27.5) {\small$p_{\ell+1}$};
\draw[draw=black,ultra thick] (13.5,-27.5) arc (180:0:2);
\end{tikzpicture}
\end{center}
\caption{Conditions in Proposition~\ref{pro:caracterisation-extension}}
\label{fig:caracterisation-extension}
\end{figure}
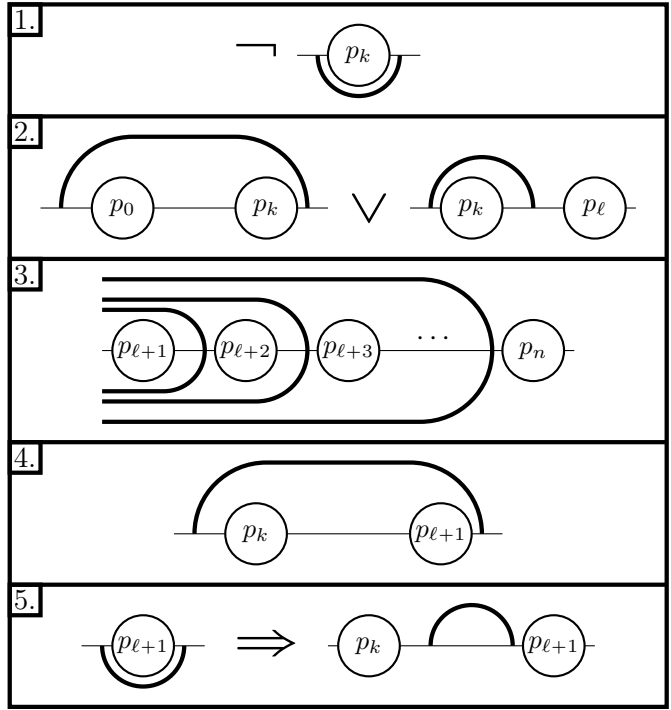

Requiring that five different conditions hold simultaneously may seem a heavy burden
in view of having to prove Proposition~\ref{pro:caracterisation-extension}.
Yet, considering only any four of these conditions would not be sufficient
to ensure the equality $\bR(\beta [k \curvearrowright \ell]) = [k \curvearrowleft \ell]$.
This is illustrated by Figure~\ref{fig:counter-example},
which presents tight laminations of five braids $\beta_1,\ldots,\beta_5$, such that, for all $i \in \{1,2,3,4,5\}$,
only the condition $(i)$ prevents the equality
$\bR(\beta_i [1 \curvearrowright 3]) = [1 \curvearrowleft 3]$ from being true.

\def\arraystretch{0.7}
\begin{figure}[!ht]
\begin{center}
\begin{tikzpicture}[scale=0.0899275]
\draw[draw=gray] (0,0) -- (23,0);

\draw[draw=gray,thick] (3,0) arc (180:-180:2);
\draw[draw=gray,thick] (2,0) arc (180:0:3) arc (180:360:4.5) arc (0:180:2) arc (360:180:2) arc (180:0:4.5) arc (360:180:8);
\draw[draw=gray,thick] (1,0) arc (180:-180:9);
\draw[draw=gray,thick] (0,0) arc (180:-180:11.5);

\draw[fill=gray,draw=gray,thick] (5,0) circle (1);
\draw[fill=white,draw=gray,thick] (11,0) circle (1);
\draw[fill=white,draw=gray,thick] (15,0) circle (1);
\draw[fill=white,draw=gray,thick] (21,0) circle (1);

\node[anchor=south] at (11.5,11.5) {$\sss\beta_1 = \sigma_1^{-2}$};
\node[anchor=north] at (11.5,-11.5) {\begin{tabular}{c}$\sss\bR(\beta_1[1 \curvearrowright 3])$\\$\sss=$\\$\sss[2 \curvearrowleft 3]$\end{tabular}};
\end{tikzpicture}
{\tiny~}
\begin{tikzpicture}[scale=0.0766049]
\draw[draw=gray] (0,0) -- (27,0);

\draw[draw=gray,thick] (3,0) arc (180:-180:2);
\draw[draw=gray,thick] (2,0) arc (180:0:3) arc (180:360:8) arc (0:180:3) arc (360:180:2) arc (180:0:5.5) arc (360:180:11.5);
\draw[draw=gray,thick] (1,0) arc (180:0:4) arc (180:360:7) arc (0:180:2) arc (360:180:4.5) arc (180:0:8) arc (360:180:12.5);
\draw[draw=gray,thick] (0,0) arc (180:-180:13.5);

\draw[fill=gray,draw=gray,thick] (5,0) circle (1);
\draw[fill=white,draw=gray,thick] (12,0) circle (1);
\draw[fill=white,draw=gray,thick] (16,0) circle (1);
\draw[fill=white,draw=gray,thick] (21,0) circle (1);

\node[anchor=south] at (13.5,13.5) {$\sss\beta_2 = \Delta_3^{-1} \sigma_1^{-1} \sigma_2^{-1}$};
\node[anchor=north] at (13.5,-13.5) {\begin{tabular}{c}$\sss\bR(\beta_2[1 \curvearrowright 3])$\\$\sss=$\\$\sss[1 \curvearrowbotleft 2]$\end{tabular}};
\end{tikzpicture}
{\tiny~}
\begin{tikzpicture}[scale=0.0590952]
\draw[draw=gray] (0,0) -- (35,0);

\draw[draw=gray,thick] (5,0) arc (180:-180:2);
\draw[draw=gray,thick] (4,0) arc (180:-180:4.5);
\draw[draw=gray,thick] (3,0) arc (180:-180:7);
\draw[draw=gray,thick] (2,0) arc (180:0:13) arc (360:180:2) arc (0:180:3) arc (360:180:8);
\draw[draw=gray,thick] (1,0) arc (180:0:14) arc (360:180:3) arc (0:180:2) arc (180:360:5.5) arc (180:2:2) arc (360:180:16.5);
\draw[draw=gray,thick] (0,0) arc (180:-180:17.5);

\draw[fill=gray,draw=gray,thick] (7,0) circle (1);
\draw[fill=white,draw=gray,thick] (11,0) circle (1);
\draw[fill=white,draw=gray,thick] (15,0) circle (1);
\draw[fill=white,draw=gray,thick] (21,0) circle (1);
\draw[fill=white,draw=gray,thick] (26,0) circle (1);
\draw[fill=white,draw=gray,thick] (32,0) circle (1);

\node[anchor=south] at (17.5,17.5) {$\sss\beta_3 = \sigma_4^{-1} \sigma_3$};
\node[anchor=north] at (17.5,-17.5) {\begin{tabular}{c}$\sss\bR(\beta_3[1 \curvearrowright 3])$\\$\sss=$\\$\sss[2 \curvearrowbotleft 5]$\end{tabular}};
\end{tikzpicture}
{\tiny~}
\begin{tikzpicture}[scale=0.0795513]
\draw[draw=gray] (0,0) -- (26,0);

\draw[draw=gray,thick] (4,0) arc (180:-180:2);
\draw[draw=gray,thick] (3,0) arc (180:-180:4.5);
\draw[draw=gray,thick] (2,0) arc (180:-180:7);
\draw[draw=gray,thick] (1,0) arc (180:0:8) arc (180:360:2) arc (180:0:2) arc (360:180:12);
\draw[draw=gray,thick] (0,0) arc (180:-180:13);

\draw[fill=gray,draw=gray,thick] (6,0) circle (1);
\draw[fill=white,draw=gray,thick] (10,0) circle (1);
\draw[fill=white,draw=gray,thick] (14,0) circle (1);
\draw[fill=white,draw=gray,thick] (19,0) circle (1);
\draw[fill=white,draw=gray,thick] (23,0) circle (1);

\node[anchor=south] at (13,13) {$\sss\beta_4 = \sigma_3^{-1}$};
\node[anchor=north] at (13,-13) {\begin{tabular}{c}$\sss\bR(\beta_4[1 \curvearrowright 3])$\\$\sss=$\\$\sss[2 \curvearrowbotleft 4]$\end{tabular}};
\end{tikzpicture}
{\tiny~}
\begin{tikzpicture}[scale=0.0689444]
\draw[draw=gray] (0,0) -- (30,0);

\draw[draw=gray,thick] (4,0) arc (180:-180:2);
\draw[draw=gray,thick] (3,0) arc (180:0:12) arc (360:180:2) arc (0:180:7) arc (360:180:3);
\draw[draw=gray,thick] (2,0) arc (180:0:13) arc (360:180:4.5) arc (0:180:4.5) arc (360:180:4);
\draw[draw=gray,thick] (1,0) arc (180:0:14) arc (360:180:7) arc (0:180:2) arc (360:180:5);
\draw[draw=gray,thick] (0,0) arc (180:-180:15);

\draw[fill=gray,draw=gray,thick] (6,0) circle (1);
\draw[fill=white,draw=gray,thick] (13,0) circle (1);
\draw[fill=white,draw=gray,thick] (17,0) circle (1);
\draw[fill=white,draw=gray,thick] (21,0) circle (1);
\draw[fill=white,draw=gray,thick] (25,0) circle (1);

\node[anchor=south] at (15,15) {$\sss\beta_5 = \Delta_4$};
\node[anchor=north] at (15,-15) {\begin{tabular}{c}$\sss\bR(\beta_5[1 \curvearrowright 3])$\\$\sss=$\\$\sss[1 \curvearrowleft 4]$\end{tabular}};
\end{tikzpicture}
\end{center}
\caption{Falsifying conditions of Proposition~\ref{pro:caracterisation-extension} one by one}
\label{fig:counter-example}
\end{figure}
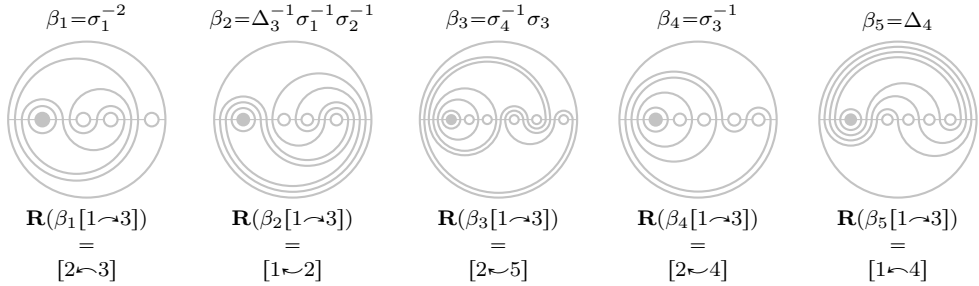

Since knowing $\pi_\beta$ is sufficient
to decide whether the equality $\lambda^{-1} = \bR(\beta \lambda)$ holds for a given
sliding braid $\lambda$, it is tempting to imagine that $\pi_\beta$ is the required
amount of information $\iota(\beta)$ mentioned above.
This is \emph{not} the case. Indeed, a second requirement about $\iota(\beta)$ is
that, when $\lambda^{-1} = \bR(\beta \lambda)$, knowing $\iota(\beta)$ and $\lambda$
is sufficient to compute $\iota(\beta \lambda)$.
However, knowing $\pi_\beta$ and $\lambda$ is not
sufficient to compute $\pi_{\beta \lambda}$, and the following example supports an even stronger statement.

{\def\arraystretch{0.7}
\begin{figure}[!ht]
\begin{center}
\begin{tikzpicture}[scale=0.13]
\draw[draw=gray] (0,0) -- (44,0);

\draw[draw=gray,thick] (4,0) arc (180:-180:2);
\draw[draw=gray,thick] (3,0) arc (180:360:3) arc (180:0:15.5) arc (360:180:11) arc (0:180:2) arc (180:360:13.5) arc (0:180:19);
\draw[draw=gray,thick] (2,0) arc (180:360:4) arc (180:0:14.5) arc (360:180:10) arc (180:0:8) arc (360:180:3) arc (0:180:2) arc (180:360:5.5) arc (0:180:11.5) arc (180:360:14.5) arc (0:180:20);
\draw[draw=gray,thick] (1,0) arc (180:360:5) arc (180:0:13.5) arc (360:180:9) arc (180:0:7) arc (360:180:2) arc (0:180:4.5) arc (180:360:8) arc (0:180:12.5) arc (180:360:15.5) arc (0:180:21);
\draw[draw=gray,thick] (0,0) arc (180:-180:22);

\draw[fill=gray,draw=gray,thick] (6,0) circle (1);
\draw[fill=white,draw=gray,thick] (16,0) circle (1);
\draw[fill=white,draw=gray,thick] (23,0) circle (1);
\draw[fill=white,draw=gray,thick] (27,0) circle (1);
\draw[fill=white,draw=gray,thick] (32,0) circle (1);

\node[anchor=south] at (22,22) {$\alpha$};
\end{tikzpicture}
~~~~
\begin{tikzpicture}[scale=0.11]
\draw[draw=gray] (0,0) -- (52,0);

\draw[draw=gray,thick] (4,0) arc (180:-180:2);
\draw[draw=gray,thick] (3,0) arc (180:360:3) arc (180:0:19.5) arc (360:180:15) arc (0:180:2) arc (180:360:17.5) arc (0:180:23);
\draw[draw=gray,thick] (2,0) arc (180:360:4) arc (180:0:18.5) arc (360:180:14) arc (180:0:12) arc (360:180:10) arc (180:0:8) arc (360:180:3) arc (0:180:2) arc (180:360:5.5) arc (0:180:9) arc (180:360:11) arc (0:180:15.5) arc (180:360:18.5) arc (0:180:24);
\draw[draw=gray,thick] (1,0) arc (180:360:5) arc (180:0:17.5) arc (360:180:13) arc (180:0:11) arc (360:180:9) arc (180:0:7) arc (360:180:2) arc (0:180:4.5) arc (180:360:8) arc (0:180:10) arc (180:360:12) arc (0:180:16.5) arc (180:360:19.5) arc (0:180:25);
\draw[draw=gray,thick] (0,0) arc (180:-180:26);

\draw[fill=gray,draw=gray,thick] (6,0) circle (1);
\draw[fill=white,draw=gray,thick] (16,0) circle (1);
\draw[fill=white,draw=gray,thick] (27,0) circle (1);
\draw[fill=white,draw=gray,thick] (31,0) circle (1);
\draw[fill=white,draw=gray,thick] (36,0) circle (1);

\node[anchor=south] at (26,26) {$\alpha'$};
\end{tikzpicture}

~\smallskip

{\def\arraystretch{1}\small
\begin{tabular}{|c|c|c|c|c|c|c|c|}
\multicolumn{2}{c}{} & \multicolumn{4}{c}{$\pi_\alpha(i,\diamond,\vartheta) = \pi_{\alpha'}(i,\diamond,\vartheta)$} & \multicolumn{2}{c}{} \\
\cline{3-6}
\multicolumn{2}{c|}{} & \multicolumn{4}{|c|}{$i$} & \multicolumn{2}{|c}{} \\
\cline{3-6}
\multicolumn{2}{c|}{} & 1 & 2 & 3 & 4 & \multicolumn{2}{|c}{} \\
\hline
\multirow{4}{*}{$\diamond$} & $-$ & $\{1\}$ & $\{2,3\}$ & $\{3\}$ & $\{3\}$ & \multirow{2}{*}{$\uparrow$} & \multirow{4}{*}{$\vartheta$} \\
\cline{2-6}
& $+$ & $\{1\}$ & $\{3\}$ & $\{3\}$ & $\{2,3,4\}$ & & \\
\cline{2-7}
& $-$ & $\{1,2,3,4\}$ & $\{2,3,4\}$ & $\{3,4\}$ & $\{4\}$ & \multirow{2}{*}{$\downarrow$} & \\
\cline{2-6}
& $+$ & $\{2,3,4\}$ & $\{3,4\}$ & $\{4\}$ & $\{4\}$ & & \\
\hline
\end{tabular}
}
\end{center}
\caption{Choosing $\iota(\beta) = \pi_\beta$ is not enough}
\label{fig:alpha-beta}
\end{figure}
}

The words $\ba = [3 \curvearrowright 4] \cdot [2 \curvearrowright 4] \cdot [1 \curvearrowright 4]^4 \cdot [2 \curvearrowright 4]$ and
$\ba' = \ba \cdot [2 \curvearrowright 4]^3$ are the respective relaxation normal forms of the braids 
$\alpha = \Delta_4^2 \sigma_3 \sigma_2 \sigma_3 \sigma_2 \sigma_3$ and
$\alpha' = \alpha (\sigma_2 \sigma_3)^3$.
The braids $\alpha$ and $\alpha'$ have the same shadow, i.e. $\pi_\alpha = \pi_{\alpha'}$, as illustrated by Figure~\ref{fig:alpha-beta}.
Hence, if $\pi_\beta$ were an adequate candidate for the amount of information $\iota(\beta)$,
the sets of braids $\{\gamma : \ba \text{ is a prefix of } \RNF(\alpha \gamma)\}$ and
$\{\gamma : \ba' \text{ is a prefix of } \RNF(\alpha' \gamma)\}$ would be equal to each other.
Yet, the braid $\gamma = \sigma_1 \sigma_2 \sigma_3 \sigma_1^{-1} \sigma_2^{-1} \sigma_3^{-1}$ belongs only to the latter set,
as shown the relations $\RNF(\alpha \gamma) = [3 \curvearrowright 4] \cdot [2 \curvearrowright 4] \cdot
[1 \curvearrowright 4]^3 \cdot [1 \curvearrowright 3] \cdot [2 \curvearrowright 3] \cdot [1 \curvearrowright 3]$ and
$\RNF(\alpha' \gamma) = \ba' \cdot [1 \curvearrowright 4] \cdot [1 \curvearrowbotright 4]$.

It is therefore legitimate to look for an amount of information $\iota(\beta)$ that would be richer than the only shadow $\pi_\beta$.
Choosing the extended shadow $\pi^2_\beta$ to be amount of information $\iota(\beta)$ that we shall memorise about $\beta$
is an adequate choice.

\begin{pro}
Let $\Sigma$ be the set of all right-oriented sliding braids
$[k \curvearrowright \ell]$ or $[k \curvearrowbotright \ell]$, where
$1 \leq k < \ell \leq n$.
There exists two functions $\dec$ and $\comp$ that take
as inputs the extended shadow $\pi^2_\beta$ of some braid $\beta$ and
a braid $\lambda \in \Sigma$, and such that
\begin{enumerate}
\item $\dec(\pi^2_\beta,\lambda) = \mathbf{true}$ if $\RNF(\beta) \cdot \lambda = \RNF(\beta \lambda)$, and $\mathbf{false}$ otherwise;
\item $\comp(\pi^2_\beta,\lambda) = \pi^2_{\beta \lambda}$ if $\RNF(\beta) \cdot \lambda = \RNF(\beta \lambda)$.
\end{enumerate}
\label{pro:automata-state}
\end{pro}

From Proposition~\ref{pro:automata-state} follow immediately the main results of this paper.

\begin{thm}
Let $\calA = (\Sigma, Q, i, \delta, Q)$ be a deterministic automaton, with
\begin{itemize}
\item alphabet $\Sigma = \{[k \curvearrowright \ell]: 1 \leq k < \ell \leq n\} \cup
\{[k \curvearrowbotright \ell]: 1 \leq k < \ell \leq n\}$;
\item state set $Q = \{\pi^2_\beta: \beta \in B_n\}$;
\item initial state $i = \pi^2_\varepsilon$;
\item transition function $\delta = \{(\pi^2_\beta,\lambda,\pi^2_{\beta \lambda}) : \bR(\beta \lambda) = \lambda^{-1}\}$;
\item set of accepting states $Q$.
\end{itemize}
The automaton $\calA$ accepts exactly the language of all relaxation normal words.
\label{thm:here-is-the-automaton}
\end{thm}

\begin{thm}
The relaxation normal form is regular.
\label{thm:master-thm}
\end{thm}

Figure~\ref{fig:automaton} presents the minimal automaton accepting the language $\RNF(B_3)$.

This minimal automaton is obtained by merging states of the above-defined automaton $\calA$.
Each state $\mathbf{s}$ of the minimal automaton is a subset of $Q$, and is represented in Figure~\ref{fig:automaton}
by some braid $\beta$ such that $\pi^2_\beta \in \mathbf{s}$.
The initial state is the state $\{\pi^2_\varepsilon\}$, and each state is accepting.
Moreover, for the sake of readability of Figure~\ref{fig:automaton},
we chose to denote by $\ol{\beta}$ the braid $\beta^{-1}$.

\begin{figure}[!ht]
\begin{center}
\begin{tikzpicture}[scale=0.74,rotate=90]
\draw[draw=black,fill=white,very thick] (-11,-7) -- (-13,-7) -- (-13,-9) -- (-11,-9) -- cycle;
\node at (-12,-8) {$\sigma_1$};
\draw[draw=black,fill=white,very thick] (-11,-3) -- (-13,-3) -- (-13,-5) -- (-11,-5) -- cycle;
\node at (-12,-4) {$\Delta\sigma_1\sigma_2^2$};
\draw[draw=black,fill=white,very thick] (-7,-3) -- (-9,-3) -- (-9,-5) -- (-7,-5) -- cycle;
\node at (-8,-4) {$\Delta\sigma_1\sigma_2$};
\draw[draw=black,fill=white,very thick] (-11,1) -- (-13,1) -- (-13,-1) -- (-11,-1) -- cycle;
\node at (-12,0) {$\Delta$};
\draw[draw=black,fill=white,very thick] (-11,5) -- (-13,5) -- (-13,3) -- (-11,3) -- cycle;
\node at (-12,4) {$\sigma_2$};
\draw[draw=black,fill=white,very thick] (-11,7) -- (-13,7) -- (-13,9) -- (-11,9) -- cycle;
\node at (-12,8) {$\sigma_2^2$};
\draw[draw=black,fill=white,very thick] (-7,1) -- (-9,1) -- (-9,-1) -- (-7,-1) -- cycle;
\node at (-8,0) {$\Delta\sigma_2$};
\draw[draw=black,fill=white,very thick] (-7,5) -- (-9,5) -- (-9,3) -- (-7,3) -- cycle;
\node at (-8,4) {$\Delta\sigma_2^2$};
\draw[draw=black,fill=white,very thick] (-3,1) -- (-5,1) -- (-5,-1) -- (-3,-1) -- cycle;
\node at (-4,0) {$\sigma_1\sigma_2$};
\draw[draw=black,fill=white,very thick] (2,-5) -- (-2,-5) -- (-2,-9) -- (2,-9) -- cycle;
\node at (0,-7) {$\ol{\sigma_1}\sigma_2$};

\draw[draw=black,fill=white,very thick] (11,7) -- (13,7) -- (13,9) -- (11,9) -- cycle;
\node at (12,8) {$\ol{\sigma_1}$};
\draw[draw=black,fill=white,very thick] (11,3) -- (13,3) -- (13,5) -- (11,5) -- cycle;
\node at (12,4) {$\ol{\Delta\sigma_1\sigma_2^2}$};
\draw[draw=black,fill=white,very thick] (7,3) -- (9,3) -- (9,5) -- (7,5) -- cycle;
\node at (8,4) {$\ol{\Delta\sigma_1\sigma_2}$};
\draw[draw=black,fill=white,very thick] (11,-1) -- (13,-1) -- (13,1) -- (11,1) -- cycle;
\node at (12,-0) {$\ol{\Delta}$};
\draw[draw=black,fill=white,very thick] (11,-5) -- (13,-5) -- (13,-3) -- (11,-3) -- cycle;
\node at (12,-4) {$\ol{\sigma_2}$};
\draw[draw=black,fill=white,very thick] (11,-7) -- (13,-7) -- (13,-9) -- (11,-9) -- cycle;
\node at (12,-8) {$\ol{\sigma_2}^2$};
\draw[draw=black,fill=white,very thick] (7,-1) -- (9,-1) -- (9,1) -- (7,1) -- cycle;
\node at (8,0) {$\ol{\Delta\sigma_2}$};
\draw[draw=black,fill=white,very thick] (7,-5) -- (9,-5) -- (9,-3) -- (7,-3) -- cycle;
\node at (8,-4) {$\ol{\Delta\sigma_2}^2$};
\draw[draw=black,fill=white,very thick] (3,-1) -- (5,-1) -- (5,1) -- (3,1) -- cycle;
\node at (4,-0) {$\ol{\sigma_1\sigma_2}$};
\draw[draw=black,fill=white,very thick] (-2,5) -- (2,5) -- (2,9) -- (-2,9) -- cycle;
\node at (0,7) {$\sigma_1\ol{\sigma_2}$};

\draw[draw=black,fill=white,very thick] (-1,-1) -- (1,-1) -- (1,1) -- (-1,1) -- cycle;
\node at (0,0) {$\varepsilon$};

\node[anchor=west] at (-2.5,0.1) {\scriptsize$\sigma_1\sigma_2$};
\node[anchor=west] at (-6.5,0.1) {\scriptsize$\sigma_1\sigma_2$};
\node[anchor=west] at (-10.5,0.1) {\scriptsize$\sigma_1\sigma_2$};
\node[anchor=east] at (-9.5,-2.5) {\scriptsize$\sigma_1\sigma_2$};
\node[anchor=south west] at (-8.1,-1.2) {\scriptsize$\sigma_1\sigma_2$};
\node[anchor=south east] at (-12.1,1.2) {\scriptsize$\sigma_1\sigma_2$};
\node[anchor=west] at (-3,5.6) {\scriptsize$\sigma_1\sigma_2$};
\node[anchor=east] at (-2.5,-9.5) {\scriptsize$\sigma_1\sigma_2$};
\node[anchor=south east] at (-0.6,-4.8) {\scriptsize$\sigma_1\sigma_2$};
\node[anchor=west] at (-3,7.1) {\scriptsize$\ol{\sigma_1\sigma_2}$};
\node[anchor=west] at (-2.5,4.5) {\scriptsize$\ol{\sigma_1\sigma_2}$};
\node[anchor=west] at (-3,8.6) {\scriptsize$\ol{\sigma_1\sigma_2}$,};
\node[anchor=west] at (-3.4,8.6) {\scriptsize$\sigma_1\sigma_2$};
\node[anchor=south west] at (-8.1,2.8) {\scriptsize$\sigma_2$};
\node[anchor=south west] at (-12.1,6.8) {\scriptsize$\sigma_2$};
\node[anchor=east] at (-10.5,2.5) {\scriptsize$\sigma_2$};
\node[anchor=east] at (-8,5.9) {\scriptsize$\sigma_2$};
\node[anchor=east] at (-12,-2.1) {\scriptsize$\sigma_2$};
\node[anchor=north] at (-13.9,8) {\scriptsize$\sigma_2$};
\node[anchor=west] at (-2.5,-6.4) {\scriptsize$\sigma_2$};
\node[anchor=west] at (-10.5,-3.9) {\scriptsize$\sigma_2$};
\node[anchor=east] at (-13.5,-7.6) {\scriptsize$\sigma_1$};
\node[anchor=west] at (-10.5,-8.4) {\scriptsize$\sigma_1$};
\node[anchor=west] at (-10.5,-7.4) {\scriptsize$\sigma_1$};
\node[anchor=west] at (-12,-9.9) {\scriptsize$\sigma_1$};
\node[anchor=east] at (-2.5,9.5) {\scriptsize$\ol{\sigma_2}$};
\node[anchor=east] at (-0,9.9) {\scriptsize$\ol{\sigma_2}$};

\node[anchor=west] at (2.5,0.1) {\scriptsize$\ol{\sigma_1\sigma_2}$};
\node[anchor=west] at (6.5,0.1) {\scriptsize$\ol{\sigma_1\sigma_2}$};
\node[anchor=west] at (10.5,0.1) {\scriptsize$\ol{\sigma_1\sigma_2}$};
\node[anchor=west] at (9.5,2.5) {\scriptsize$\ol{\sigma_1\sigma_2}$};
\node[anchor=south east] at (7.9,1.2) {\scriptsize$\ol{\sigma_1\sigma_2}$};
\node[anchor=south west] at (11.9,-1.2) {\scriptsize$\ol{\sigma_1\sigma_2}$};
\node[anchor=west] at (3,-5.4) {\scriptsize$\ol{\sigma_1\sigma_2}$};
\node[anchor=west] at (2.5,9.5) {\scriptsize$\ol{\sigma_1\sigma_2}$};
\node[anchor=north west] at (0.6,4.8) {\scriptsize$\ol{\sigma_1\sigma_2}$};
\node[anchor=west] at (3,-6.9) {\scriptsize$\sigma_1\sigma_2$};
\node[anchor=east] at (2.5,-4.5) {\scriptsize$\sigma_1\sigma_2$};
\node[anchor=west] at (3,-8.4) {\scriptsize$\sigma_1\sigma_2$};
\node[anchor=west] at (3.4,-8.4) {\scriptsize$\ol{\sigma_1\sigma_2}$,};
\node[anchor=south east] at (7.9,-2.8) {\scriptsize$\ol{\sigma_2}$};
\node[anchor=south east] at (11.9,-6.8) {\scriptsize$\ol{\sigma_2}$};
\node[anchor=east] at (10.5,-2.5) {\scriptsize$\ol{\sigma_2}$};
\node[anchor=west] at (8,-5.9) {\scriptsize$\ol{\sigma_2}$};
\node[anchor=west] at (12,2.1) {\scriptsize$\ol{\sigma_2}$};
\node[anchor=south] at (13.9,-8) {\scriptsize$\ol{\sigma_2}$};
\node[anchor=west] at (2.5,6.6) {\scriptsize$\ol{\sigma_2}$};
\node[anchor=west] at (10.5,4.1) {\scriptsize$\ol{\sigma_2}$};
\node[anchor=west] at (13.5,7.6) {\scriptsize$\ol{\sigma_1}$};
\node[anchor=west] at (10.5,8.6) {\scriptsize$\ol{\sigma_1}$};
\node[anchor=west] at (10.5,7.6) {\scriptsize$\ol{\sigma_1}$};
\node[anchor=east] at (12,9.9) {\scriptsize$\ol{\sigma_1}$};
\node[anchor=west] at (2.5,-9.5) {\scriptsize$\sigma_2$};
\node[anchor=west] at (0,-9.9) {\scriptsize$\sigma_2$};

\draw[->,>=stealth',draw=black,thick] (-7,-1) -- (-6,-2) -- (-6,-5.5) -- (-8,-7.5) -- (-11,-7.5);
\draw[->,>=stealth',draw=black,thick] (-13,4) -- (-14,4) -- (-14,-7.5) -- (-13,-7.5);
\draw[->,>=stealth',draw=black,thick] (-11,-9) -- (-10,-10) -- (-3,-10) -- (-2,-9);
\draw[->,>=stealth',draw=black,thick] (-13,-8.5) -- (-15,-8.5) -- (-15,10) -- (-3,10) -- (-2,9);
\draw[->,>=stealth',draw=black,thick] (-3.5,-1) -- (-3.5,-6.5) -- (-2,-6.5);
\draw[->,>=stealth',draw=black,thick] (-5,0) -- (-7,0);
\draw[->,>=stealth',draw=black,thick] (-3.5,1) -- (-3.5,3.5) -- (-2,5);
\draw[->,>=stealth',draw=black,thick] (-0.5,5) -- (-0.5,4) -- (-1.5,3) -- (-1.5,-3) -- (-0.5,-4) -- (-0.5,-5);
\draw[->,>=stealth',draw=black,thick] (0.5,-9) -- (0.5,-10) -- (-0.5,-10) -- (-0.5,-9);
\draw[->,>=stealth',draw=black,thick] (-12,3) -- (-12,1);
\draw[->,>=stealth',draw=black,thick] (-11,-1) -- (-9,-3);
\draw[->,>=stealth',draw=black,thick] (-8,-3) -- (-8,-1);
\draw[->,>=stealth',draw=black,thick] (-9,0) -- (-11,0);
\draw[->,>=stealth',draw=black,thick] (-9,-4) -- (-11,-4);
\draw[->,>=stealth',draw=black,thick] (-12.5,-3) -- (-12.5,-2) -- (-11.5,-2) -- (-11.5,-3);
\draw[->,>=stealth',draw=black,thick] (-8,1) -- (-8,3);
\draw[->,>=stealth',draw=black,thick] (-8.5,5) -- (-8.5,6) -- (-7.5,6) -- (-7.5,5);
\draw[->,>=stealth',draw=black,thick] (-7,4) -- (-6,4) -- (-4.5,5.5) -- (-2,5.5);
\draw[->,>=stealth',draw=black,thick] (-12,5) -- (-12,7);
\draw[->,>=stealth',draw=black,thick] (-11,8.5) -- (-2,8.5);
\draw[->,>=stealth',draw=black,thick] (-13,7.5) -- (-14,7.5) -- (-14,8.5) -- (-13,8.5);
\draw[->,>=stealth',draw=black,thick] (-11.5,-9) -- (-11.5,-10) -- (-12.5,-10) -- (-12.5,-9);
\draw[->,>=stealth',draw=black,thick] (-11,4.5) -- (-10,4.5) -- (-10,7) -- (-2,7);
\draw[->,>=stealth',draw=black,thick] (-1,0) -- (-3,0);
\draw[->,>=stealth',draw=black,thick] (-0.5,1) -- (-0.5,1.5) -- (-9.5,1.5) -- (-11,3);
\draw[->,>=stealth',draw=black,thick] (-0.5,-1) -- (-0.5,-1.5) -- (-7.5,-8.5) -- (-11,-8.5);

\draw[->,>=stealth',draw=black,thick] (7,1) -- (6,2) -- (6,5.5) -- (8,7.5) -- (11,7.5);
\draw[->,>=stealth',draw=black,thick] (13,-4) -- (14,-4) -- (14,7.5) -- (13,7.5);
\draw[->,>=stealth',draw=black,thick] (11,9) -- (10,10) -- (3,10) -- (2,9);
\draw[->,>=stealth',draw=black,thick] (13,8.5) -- (15,8.5) -- (15,-10) -- (3,-10) -- (2,-9);
\draw[->,>=stealth',draw=black,thick] (3.5,1) -- (3.5,6.5) -- (2,6.5);
\draw[->,>=stealth',draw=black,thick] (5,-0) -- (7,-0);
\draw[->,>=stealth',draw=black,thick] (3.5,-1) -- (3.5,-3.5) -- (2,-5);
\draw[->,>=stealth',draw=black,thick] (0.5,-5) -- (0.5,-4) -- (1.5,-3) -- (1.5,3) -- (0.5,4) -- (0.5,5);
\draw[->,>=stealth',draw=black,thick] (-0.5,9) -- (-0.5,10) -- (0.5,10) -- (0.5,9);
\draw[->,>=stealth',draw=black,thick] (12,-3) -- (12,-1);
\draw[->,>=stealth',draw=black,thick] (11,1) -- (9,3);
\draw[->,>=stealth',draw=black,thick] (8,3) -- (8,1);
\draw[->,>=stealth',draw=black,thick] (9,-0) -- (11,-0);
\draw[->,>=stealth',draw=black,thick] (9,4) -- (11,4);
\draw[->,>=stealth',draw=black,thick] (12.5,3) -- (12.5,2) -- (11.5,2) -- (11.5,3);
\draw[->,>=stealth',draw=black,thick] (8,-1) -- (8,-3);
\draw[->,>=stealth',draw=black,thick] (8.5,-5) -- (8.5,-6) -- (7.5,-6) -- (7.5,-5);
\draw[->,>=stealth',draw=black,thick] (7,-4) -- (6,-4) -- (4.5,-5.5) -- (2,-5.5);
\draw[->,>=stealth',draw=black,thick] (12,-5) -- (12,-7);
\draw[->,>=stealth',draw=black,thick] (11,-8.5) -- (2,-8.5);
\draw[->,>=stealth',draw=black,thick] (13,-7.5) -- (14,-7.5) -- (14,-8.5) -- (13,-8.5);
\draw[->,>=stealth',draw=black,thick] (11.5,9) -- (11.5,10) -- (12.5,10) -- (12.5,9);
\draw[->,>=stealth',draw=black,thick] (11,-4.5) -- (10,-4.5) -- (10,-7) -- (2,-7);
\draw[->,>=stealth',draw=black,thick] (1,-0) -- (3,-0);
\draw[->,>=stealth',draw=black,thick] (0.5,-1) -- (0.5,-1.5) -- (9.5,-1.5) -- (11,-3);
\draw[->,>=stealth',draw=black,thick] (0.5,1) -- (0.5,1.5) -- (7.5,8.5) -- (11,8.5);

\draw[->,>=stealth',draw=black,thick] (0,3) -- (0,1);
\end{tikzpicture}
\end{center}
\caption{Minimal automaton accepting the language $\RNF(B_3)$}
\label{fig:automaton}
\end{figure}
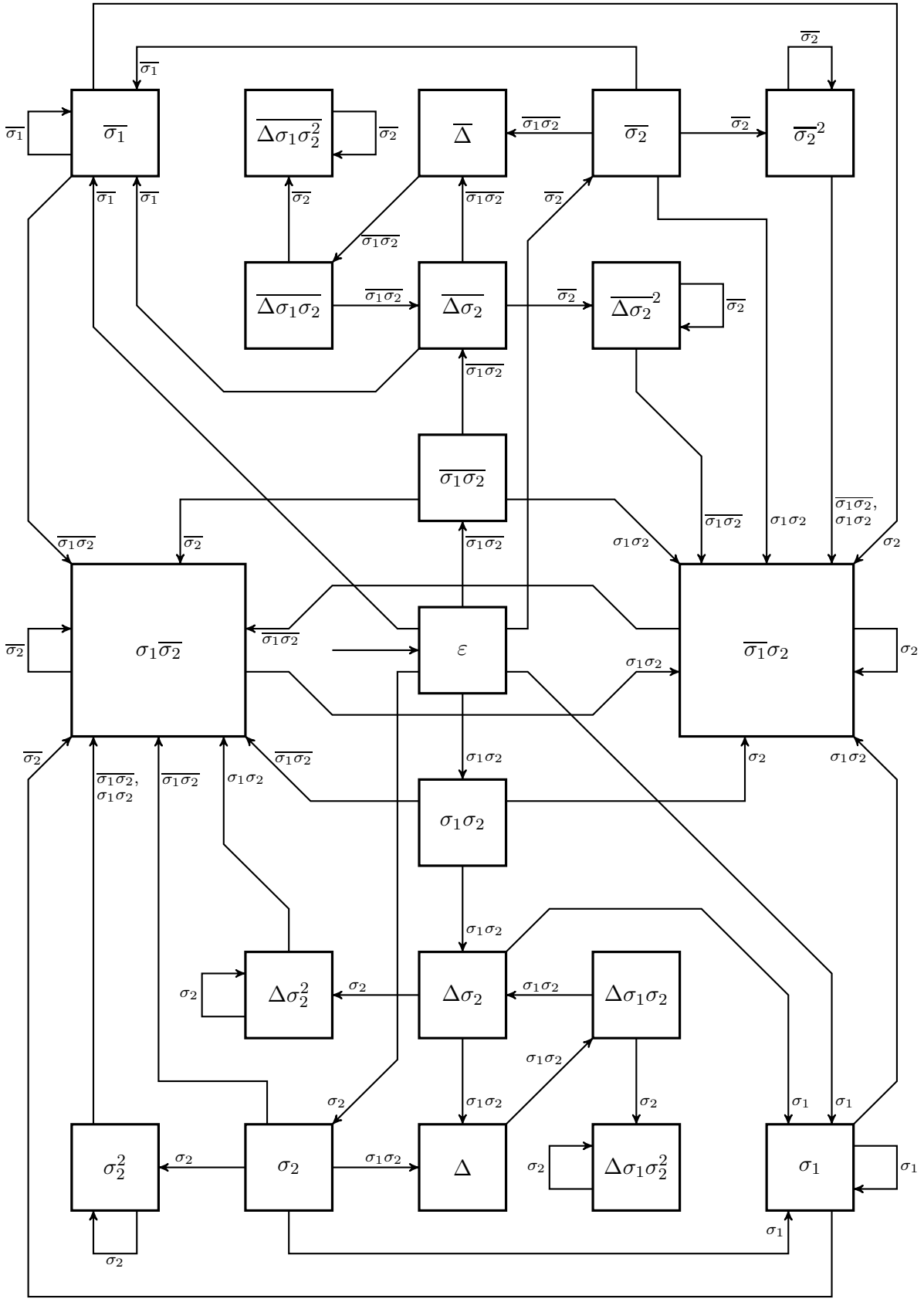

\pagebreak

\section{The Relaxation Normal Form is Regular: Rigorous Proofs}
\label{section:proofs}

Having exposed in section~\ref{section:braid-laminations} the main claims that led to the construction
of automata recognizing the relaxation normal form,
it remains to prove these claims.

\subsection{First Proofs}

As a warm-up, let us write full proofs of Lemmas~\ref{lem:easy-stuff} and~\ref{lem:right-oriented}.
The tools and ideas used in these proofs will occur in most subsequent proofs,
albeit in more complex sequences of arguments.

\begin{lem}
Let $\calL$ be a tight lamination.
The bigons of $\calL$ are the arcs $\calA$ of $\calL$, with endpoints $e_\calC < E_\calC$,
and such that the interval $(e_\calC,E_\calC)$ is minimal for the inclusion,
among all such intervals associated with arcs of $\calL$ lying in the same (upper or lower) half-plane as $\calA$.
\label{lem:small-is-bigon}
\end{lem}

\begin{proof}
First, it is clear that all bigons fall in the class of arcs described in Lemma~\ref{lem:small-is-bigon}.
Conversely, let $\calA$ be an arc of $\calL$ that is not a bigon.
Without loss of generality, we assume that $\calA$ is an upper arc.
There exists an endpoint $p$ of an arc of $\calL$ such that $p \in (e_\calC,E_\calC)$,
and there exists an upper arc $\calD$ of $\calL$, with endpoints
$e_\calD < E_\calD$, and such that $p \in \{e_\calD,E_\calD\}$.
Since $\calC$ and $\calD$ cannot cross each other, it follows that $e_\calC < e_\calD < E_\calD < E_\calC$.
\end{proof}

In particular, when $\calA$ is an arc of $\calL$ with endpoints $e_\calA < E_\calA$,
there exists a bigon $\calC$, with endpoints $e_\calC < E_\calC$ such that
(i) $(e_\calC,E_\calC)$ is a subset of $(e_\calA,E_\calA)$ and
(ii) $\calA$ and $\calC$ lie in the same (upper of lower) half-plane as each other.
Reusing the notions of Definition~\ref{dfn:arcs-bigons}, we say that such a bigon is \emph{covered} by $\calA$.
The notion of covered bigon leads directly to the following result.

\begin{cor}
A lamination $\calL$ is tight if and only if every arc of $\calL$ covers at least one puncture.
\label{cor:all-cover}
\end{cor}

\begin{proof}[Proof of Lemma~\ref{lem:easy-stuff}]
Let us first assume that $\calL$ is a tight lamination such that no mobile puncture of $\calL$ is covered by a bigon.
Let $\calA$ be an arc of $\calL$, and let $\calC$ be a bigon of $\calL$ covered by $\calA$.
Since $\calL$ is tight, $\calC$ must cover the fixed puncture $p_0 = -1$, hence $\calA$ also covers the puncture $p_0 = -1$,
i.e. $\calA$ touches the real interval $(-\infty,-1)$.
Every curve $\calL_i$ of $\calL$ contains only one lower arc and one upper arc
that touch the interval $(-\infty,-1)$, and we just proved that $\calL_i$ cannot contain other arcs.
It follows that $\calL$ is trivial.

From now on, we drop the assumption that no mobile puncture of $\calL$ may be covered by a bigon.
Let us assume that two distinct bigons $\calB_1$ and $\calB_2$ cover a puncture $p$ of $\calL$.
The bigons $\calB_1$ and $\calB_2$ must share their endpoints, hence the union $\calB_1 \cup \calB_2$
is one of the curves $\calL_i$. The puncture $p$ is the only puncture that lies within
the finite area delimited by $\calL_i$, hence $p = p_0$ and $\calL_i = \calL_0$.

Finally, let us assume that $p$ is a mobile puncture covered by a bigon $\calB$,
which is necessarily unique.
The arcs $\calA_1$ and $\calA_2$ must be distinct,
unless what the arc $\calA_1 = \calA_2$, sharing both endpoints of $\calB$, would itself be a bigon.
Moreover, the three arcs $\calA_1$, $\calA_2$ and $\calB$ belong to the same curve $\calL_j$,
and it is known that $\calL_j$ crosses exactly once the real interval $(-\infty,-1)$.
Consequently, at least one of the arcs $\calA_1$ and $\calA_2$ cannot cross the interval $(-\infty,-1)$,
and therefore cannot cover the fixed puncture $-1$.
\end{proof}

We continue by proving the following result, from which Lemma~\ref{lem:right-oriented} will follow.

\begin{lem}
Let $\calL$ be a non-trivial tight lamination and let $p_k$ be the rightmost puncture
covered by some bigon $\calB$ of the lamination $\calL$.
In addition, let $e_\calB < E_\calB$ be the endpoints of $\calB$.
For all arcs $\calA$ of $\calL$ with endpoints $e_\calA < E_\calA$,
we have $e_\calA \leq e_\calB$, with equality if and only if $\calA = \calB$.
\label{lem:left-endpoint-is-left}
\end{lem}

\begin{proof}
Let us assume that the inequality $e_\calB \leq e_\calA$ holds.
All bigons cover exactly one puncture of $\calL$, and therefore $p_{k-1} < e_\calB < p_k < E_\calB < p_{k+1}$
(with the convention that $p_{n+1} = 1$).
Furthermore, if $\calA$ is a bigon, then $\calA$ must cover some pucture $p_j$ such that
$p_{k-1} < e_\calB \leq e_\calA < p_j$, and therefore $p_k \leq p_j$.
Since $p_j$ cannot lie to the right of $p_k$, it follows that $p_k = p_j$, and since only one bigon covers $p_k$, it follows that $\calA = \calB$,
whence $e_\calA \leq e_\calB$.

Hence, we assume that $\calA$ is not a bigon.
Let $\calC$ be a bigon of $\calL$ covered by $\calA$, with endpoints $e_\calC < E_\calC$.
Since $\calA \neq \calC$, we have $e_\calB \leq e_\calA < e_\calC$,
which was just proven to be impossible. This completes the proof.
\end{proof}

\begin{proof}[Proof of Lemma~\ref{lem:right-oriented}]
Let $\calL$ be a non-trivial tight lamination representing a braid $\beta$, and let $p_k$ be the rightmost puncture
covered by some bigon $\calB$ of the lamination $\calL$.
In addition, let $\calA$ be the arc of $\calB$ along which $p_k$ is slid when $\calL$ is relaxed.
Let $e_\calA < E_\calA$ be the endpoints of $\calA$, and let $e_\calB < E_\calB$ be the endpoints of $\calB$.

Since $\calA \neq \calB$, it follows from Lemma~\ref{lem:left-endpoint-is-left} that $e_\calA < e_\calB < p_k < E_\calB$.
Hence, the arc $\calA$ does not share its left endpoint $e_\calA$ with $\calB$, and therefore
the puncture $p_k$ is slid until it arrives next to $e_\calA$.
Since $e_\calA$ lies in some interval $(p_i,p_{i+1})$ with $i \leq k-1$,
it follows directly that $\bR(\beta)$ is a left-oriented sliding braid.
\end{proof}

\subsection{Proving Proposition~\ref{pro:caracterisation-extension}}

We prove now the first challenging result of this paper, which is Proposition~\ref{pro:caracterisation-extension}.
We do it in two steps, first proving that the conditions enumerated in Proposition~\ref{pro:caracterisation-extension} are necessary,
then that they are sufficient, thereby proving Proposition~\ref{pro:caracterisation-extension} itself.

\begin{lem}
Let $\beta \in B_n$ be some braid, and let $k$ and $\ell$ be integers such that $0 < k < \ell \leq n$.
If the equality $\bR(\beta [k \curvearrowright \ell]) = [k \curvearrowleft \ell]$ holds, then all of the following conditions are fulfilled:
\begin{enumerate}
\item $\pi_\beta(k,+,\downarrow) \neq \{k\}$;
\item either $\pi_\beta(k,+,\uparrow) = \{0,\ldots,k\}$ or $\pi_\beta(k,-,\uparrow) \subseteq \{k,\ldots,\ell-1\}$;
\item for all $i \in \{\ell+2,\ldots,n\}$, $\ell+1 \in \pi_\beta(i,-,\uparrow) \cap \pi_\beta(i,-,\downarrow)$;
\item if $\ell < n$, then $k \in \pi_\beta(\ell+1,+,\uparrow)$;
\item if $\ell < n$, then either $\pi_\beta(\ell+1,+,\downarrow) \neq \{\ell+1\}$ or
$\pi_\beta(\ell+1,-,\uparrow) \subseteq \{k+1,\ldots,\ell\}$.
\end{enumerate}
\label{lem:condition-1}
\end{lem}

\begin{proof}
In what follows, we denote by $\lambda$ the sliding braid $[k \curvearrowright \ell]$,
and let us assume that the equality $\bR(\beta \lambda) = \lambda^{-1}$ holds.
Let $\calL$ be a tight lamination of $\beta$, and let $\oL$ be a tight lamination of $\beta \lambda$.
We will also denote by $p_0,\ldots,p_n$ the punctures of $\calL$, and by $\op_0,\ldots,\op_n$ the punctures of $\oL$.

We first show how to draw $\calL$ by modifying $\oL$.
Since $\bR(\beta \lambda) = \lambda^{-1}$, the puncture $\op_\ell$ of $\oL$ belongs to a lower bigon of $\oL$.
When relaxing $\oL$, the puncture $\op_\ell$ is slid along one of its upper neighboring arcs in $\oL$,
and arrives at some position $p_k \in (\op_{k-1},\op_k)$.
Observe that no upper arc of $\oL$ has shadow $\{\ell\}$,
unless what that arc would cover an upper bigon which covering $\op_\ell$ itself, which is impossible since $\op_\ell$ already belongs to a lower bigon.

Hence, we obtain a lamination $\calL'$ isotopic to $\calL$ by appyling the following steps.
\begin{enumerate}[(i)]
\item We replace the puncture $\op_\ell$ by the puncture $p_k$.
\item For each (lower) arc $A$ of $\oL$ with shadow $\{\ell\}$ in $\oL$,
let $E_1 < E_2$ be the endpoints of $A$, and let $B_1$ and $B_2$ be the upper arcs that respectively share the endpoints $E_1$ and $E_2$.
Lemma~\ref{lem:left-endpoint-is-left} shows that $B_1$ and $B_2$ have endpoints $e_1 < E_1$ and $e_2 < E_2$.
Since $B_1$ must cover some puncture of $\oL$ and since $B_1$ and $B_2$ cannot cross each other,
we have $e_2 < p_k < e_1 < \op_{\ell-1} < E_1 < \op_\ell < E_2 < \op_{\ell+1}$,
regardless of whether $\op_\ell$ is slid along $B_1$ or along $B_2$.
Hence, we merge the arcs $B_1$, $A$ and $B_2$ into one upper arc $C$ whose endpoints are $e_2 < e_1$,
as illustrated in Figure~\ref{fig:proof-step-0}.
\item We do not modify other arcs or punctures of $\oL$.
\end{enumerate}

\begin{figure}[!ht]
\begin{center}
\begin{tikzpicture}[scale=0.35]
\draw[draw=black] (0,0) -- (14,0);

\draw[fill=white,draw=black,thick] (2,0) circle (1.5);
\node at (2,0) {\small$p_k$};
\draw[fill=white,draw=black,thick] (6,0) circle (1.5);
\node at (6,0.6) {\small$\op_{\ell-1}$};
\node at (6,-0.2) {\small$=$};
\node at (6,-0.8) {\small$p_\ell$};
\draw[fill=white,draw=black,thick] (10,0) circle (1.5);
\node at (10,0) {\small$\op_\ell$};
\draw[fill=white,draw=black,thick] (14,0) circle (1.5);
\node at (14,0.6) {\small$\op_{\ell+1}$};
\node at (14,-0.2) {\small$=$};
\node at (14,-0.8) {\small$p_{\ell+1}$};

\draw[draw=gray,ultra thick] (0,0) arc (180:90:4) -- (8,4) arc (90:0:4) arc (360:180:2) arc (0:180:2);

\draw[draw=black,fill=black,ultra thick] (0,0) circle (0.25);
\draw[draw=black,fill=black,ultra thick] (4,0) circle (0.25);
\draw[draw=black,fill=black,ultra thick] (8,0) circle (0.25);
\draw[draw=black,fill=black,ultra thick] (12,0) circle (0.25);
\draw[draw=black,densely dotted] (0,0) -- (0,-3);
\draw[draw=black,densely dotted] (4,0) -- (4,-3);
\draw[draw=black,densely dotted] (8,0) -- (8,-3);
\draw[draw=black,densely dotted] (12,0) -- (12,-3);
\node[anchor=north] at (0,-3) {$e_2$};
\node[anchor=north] at (4,-3) {$e_1$};
\node[anchor=north] at (8,-3) {$E_1$};
\node[anchor=north] at (12,-3) {$E_2$};

\node[anchor=south] at (6,3.85) {$B_2$};
\node[anchor=south] at (6,1.85) {$B_1$};
\node[anchor=north] at (10,-1.85) {$A$};

\draw[->,draw=black,ultra thick] (16.25,0) -- (18.5,0);

\draw[draw=black] (19.5,0) -- (33.5,0);

\draw[fill=white,draw=black,thick] (21.5,0) circle (1.5);
\node at (21.5,0) {\small$p_k$};
\draw[fill=white,draw=black,thick] (25.5,0) circle (1.5);
\node at (25.5,0.6) {\small$\op_{\ell-1}$};
\node at (25.5,-0.2) {\small$=$};
\node at (25.5,-0.8) {\small$p_\ell$};
\draw[fill=white,draw=black,thick] (29.5,0) circle (1.5);
\node at (29.5,0) {\small$\op_\ell$};
\draw[fill=white,draw=black,thick] (33.5,0) circle (1.5);
\node at (33.5,0.6) {\small$\op_{\ell+1}$};
\node at (33.5,-0.2) {\small$=$};
\node at (33.5,-0.8) {\small$p_{\ell+1}$};

\draw[draw=gray,ultra thick] (19.5,0) arc (180:0:2);

\draw[draw=black,fill=black,ultra thick] (19.5,0) circle (0.25);
\draw[draw=black,fill=black,ultra thick] (23.5,0) circle (0.25);
\draw[draw=black,densely dotted] (19.5,0) -- (19.5,-3);
\draw[draw=black,densely dotted] (23.5,0) -- (23.5,-3);
\node[anchor=north] at (19.5,-3) {$e_2$};
\node[anchor=north] at (23.5,-3) {$e_1$};

\node[anchor=south] at (21.5,1.85) {$C$};
\end{tikzpicture}
\end{center}
\caption{Merging arcs of the lamination $\oL$}
\label{fig:proof-step-0}
\end{figure}
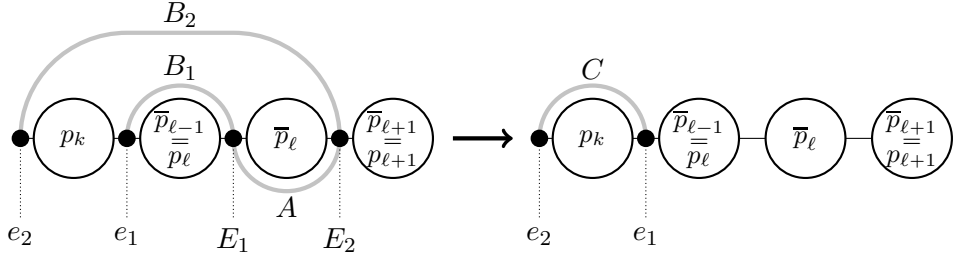

The lamination $\calL'$ obtained in that way is tight.
Indeed, let $\calA$ be an arc of $\calL'$.
If $\calA$ is also an arc of $\oL$, then $\calC$ must cover some puncture $\op_j$ with $j \neq \ell$, and $\op_j$ is also a puncture of $\calL'$.
If $\calA$ is not an arc of $\oL$, then its endpoints are of the form $e_2 < e_1$, with $e_2 < p_k < e_1$, and
therefore $\calA$ covers the puncture $p_k$ ot $\calL'$.
Hence, we may assume that $\calL = \calL'$, and we prove now that
$\beta$ satisfies the conditions enumerated in Proposition~\ref{pro:caracterisation-extension}.

In the enumeration below, we call $A$ the lower bigon of $\oL$ that covers $\op_\ell$ and,
reusing the notations of the step (ii),
we denote by $B_1$ and $B_2$ the arcs of $\oL$ that share an endpoint with $A$, and so on.

\begin{enumerate}
\item Assume here that $\pi_\beta(k,+,\downarrow) = \{k\}$.
Then, the puncture $p_k$ is covered by some lower bigon $\calD$ of $\calL$ and,
like all lower arcs of $\calL$, $\calD$ is also an arc of $\oL$.
Since $\calD$ covers $p_k$ but does not cover any puncture $\op_j$ of $\oL$ with $j \neq \ell$, and
since $p_k < \op_{\ell-1} < \op_\ell$, the arc $\calD$ cannot cover the puncture $\op_\ell$ either.
Hence, $\calD$ covers no puncture of $\oL$, contradicting the tightness of $\oL$.
This contradiction shows that $\pi_\beta(k,+,\downarrow) \neq \{k\}$.

\item If $e_2 < -1$, then $\op_\ell$ was slid again $B_1$, and therefore $C$
is the right upper arc of $p_k$ in the lamination $\calL$, so that
$\pi_\beta(k,+,\uparrow) = \pi_\calL(C) = \{0,\ldots,k\}$.
If $-1 < e_2$, then $\op_\ell$ was slid again $B_1$, and therefore $C$
is the left upper arc of $p_k$ in $\calL$, so that
$\pi_\beta(k,-,\uparrow) = \pi_\calL(C) \subseteq \{k,\ldots,\ell\}$.

\item Let $\calD$ be a left neighbor arc of a puncture $p_j$ in $\calL$, with $j \geq \ell+2$,
and let $e < E$ be the endpoints of $\calA$.
If $\calD$ belongs to $\oL$, then Lemma~\ref{lem:left-endpoint-is-left} proves that
$e \leq \op_\ell < \op_{\ell+1} = p_{\ell+1}$.
If $\calD$ is one of the arcs created during the step (ii),
then we also know that $e < p_\ell < p_{\ell+1}$.
Hence, in both cases, we have $e < p_{\ell+1} \leq p_{j-1} < E$, and therefore $\ell+1 \in \pi_\beta(\calD)$.

\item Let $\calD$ be the right upper neighbor arc of the puncture $p_{\ell+1}$ in $\calL$,
with endpoints $e_\calD < E_\calD$ and $p_{\ell+1} < E_\calD$.
Arcs created during the step (ii) have both their endpoints in the interval $(-\infty,p_\ell)$,
hence $\calD$ is also an arc of $\oL$, and Lemma~\ref{lem:left-endpoint-is-left} proves that
$e_\calD \leq E_1 < E_2 < \op_{\ell+1} = p_{\ell+1} < E_\calD$.
Both $C_2$ and $\calD$ are upper arcs of $\calL$, hence they do not cross each other,
and therefore $e_\calD < e_2 < p_k < p_{\ell+1} < E_\calD$, which is why
$k \in \pi_\calL(\calD) = \pi_\beta(\ell+1,+,\uparrow)$.

\item Assume that $p_{\ell+1}$ belongs to some lower bigon $\calD$ in $\calL$, with endpoints
$e_\calD < E_\calD$. Like all lower arcs of $\calL$, $\calD$ is also an arc of $\oL$.
Since $\calD$ covers the puncture $p_{\ell+1} = \op_{\ell+1}$ but not the puncture $p_\ell = \op_{\ell-1}$,
we have $\op_{\ell-1} < e_\calD < \op_{\ell+1}$. Moreover, $\calD$ is distinct from $A$,
hence Lemma~\ref{lem:left-endpoint-is-left} proves that $e_1 < \op_{\ell-1} < e_\calD < E_1 < \op_\ell$.

Let $\calE$ be the upper arc of $\oL$ that shares the endpoint $e_\calD$,
and let $e_\calE < E_\calE$ be the endpoints of $\calE$.
The upper arcs $B_1$ and $\calE$ of the lamination $\oL$ cannot cross each other,
and no puncture lies on the interval $(e_\calD,E_1)$, hence
$e_\calD$ must be the rightmost puncture of $\calE$, and we have
\[p_k < e_1 < e_\calE < \op_{\ell-1} < E_\calE = e_\calD < \op_{\ell+1} = p_{\ell+1},\]
as illustrated in Figure~\ref{fig:proof-step-5}.
Moreover, the arc $\calE$ was certainly not among the upper arcs of $\oL$ deleted during the phase (ii),
hence $\calE$ is still an arc of $\calL$, and is even the left upper arc of $p_{\ell+1}$.
This proves that $\pi_\beta(\ell+1,-,\uparrow) = \pi_\calL(\calE) \subseteq \{k+1,\ldots,\ell\}$.
\end{enumerate}

\begin{figure}[!ht]
\begin{center}
\begin{tikzpicture}[scale=0.4]
\draw[draw=black] (-2,0) -- (18,0);

\draw[fill=white,draw=black,thick] (-2,0) circle (1.5);
\node at (-2,0) {\small$p_k$};
\draw[fill=white,draw=black,thick] (5,0) circle (1.5);
\node at (5,0.6) {\small$\op_{\ell-1}$};
\node at (5,-0.2) {\small$=$};
\node at (5,-0.8) {\small$p_\ell$};
\draw[fill=white,draw=black,thick] (10,0) circle (1.5);
\node at (10,0) {\small$p_\ell$};
\draw[fill=white,draw=black,thick] (14,0) circle (1.5);
\node at (14,0.6) {\small$\op_{\ell+1}$};
\node at (14,-0.2) {\small$=$};
\node at (14,-0.8) {\small$p_{\ell+1}$};
\draw[fill=white,draw=black,thick] (18,0) circle (1.5);
\node at (18,0.6) {\small$\op_{\ell+2}$};
\node at (18,-0.2) {\small$=$};
\node at (18,-0.8) {\small$p_{\ell+2}$};

\draw[draw=black,fill=black,ultra thick] (0,0) circle (0.25);
\draw[draw=black,fill=black,ultra thick] (3,0) circle (0.25);
\draw[draw=black,fill=black,ultra thick] (7,0) circle (0.25);
\draw[draw=black,fill=black,ultra thick] (8,0) circle (0.25);
\draw[draw=black,densely dotted] (0,0) -- (0,4.5);
\draw[draw=black,densely dotted] (3,0) -- (3,4.5);
\draw[draw=black,densely dotted] (7,0) -- (7,4.5);
\draw[draw=black,densely dotted] (8,0) -- (8,4.5);
\node[anchor=south] at (0,4.5) {$e_1$};
\node[anchor=south] at (3,4.5) {$e_\calE$};
\node[anchor=south east] at (7.7,4.5) {$E_\calE$};
\node[anchor=south west] at (7.4,4.5) {$E_1$};

\draw[draw=black,ultra thick] (0,0) arc (180:0:4) arc (180:360:2);
\draw[draw=black,ultra thick] (3,0) arc (180:0:2) arc (180:360:4.5);

\node[anchor=south] at (1.8,3.5) {$B_1$};
\node[anchor=south] at (5,2) {$\calE$};
\node[anchor=north] at (10,-2) {$A$};
\node[anchor=north] at (14.2,-3.8) {$\calD$};
\end{tikzpicture}
\end{center}
\caption{A fragment of the lamination $\oL$}
\label{fig:proof-step-5}
\end{figure}
\end{proof}

\begin{lem}
Let $\beta \in B_n$ be some braid, and let $k$ and $\ell$ be integers such that $0 < k < \ell \leq n$.
Let us assume that the following conditions are fulfilled:
\begin{enumerate}
\item $\pi_\beta(k,+,\downarrow) \neq \{k\}$;
\item either $\pi_\beta(k,+,\uparrow) = \{0,\ldots,k\}$ or $\pi_\beta(k,-,\uparrow) \subseteq \{k,\ldots,\ell-1\}$;
\item for all $i \in \{\ell+2,\ldots,n\}$, $\ell+1 \in \pi_\beta(i,-,\uparrow) \cap \pi_\beta(i,-,\downarrow)$;
\item if $\ell < n$, then $k \in \pi_\beta(\ell+1,+,\uparrow)$;
\item if $\ell < n$, then either $\pi_\beta(\ell+1,+,\downarrow) \neq \{\ell+1\}$ or
$\pi_\beta(\ell+1,-,\uparrow) \subseteq \{k+1,\ldots,\ell\}$.
\end{enumerate}
Then, the equality $\bR(\beta [k \curvearrowright \ell]) = [k \curvearrowleft \ell]$ holds.
\label{lem:condition-2}
\end{lem}

\begin{proof}
Again, we denote by $\lambda$ the sliding braid $[k \curvearrowright \ell]$,
and we assume that the conditions (1) to (5) hold.
Let $\calL$ be a tight lamination of $\beta$, with punctures $p_0,\ldots,p_n$, and
let $\oL$ be a tight lamination of $\beta \lambda$, with punctures $\op_0,\ldots,\op_n$.

\begin{figure}[!ht]
\begin{center}
\begin{tikzpicture}[scale=0.135]
\draw[draw=black,ultra thick] (-2,7) -- (88,7) -- (88,-23) -- (-2,-23) -- cycle;
\draw[draw=black,ultra thick] (-2,-3) -- (88,-3);
\draw[draw=black,ultra thick] (-2,-13) -- (88,-13);
\draw[draw=black,ultra thick] (28,-23) -- (28,7);
\draw[draw=black,ultra thick] (58,-23) -- (58,7);

\node[anchor=north west] at (-1.5,6.5) {$\Omega_1$};
\node[anchor=north west] at (-1.5,-3.5) {$\Omega_2$};
\node[anchor=north west] at (-1.5,-13.5) {$\Omega_3$};
\node[anchor=north west] at (28.5,6.5) {$\Omega_4$};
\node[anchor=north west] at (28.5,-3.5) {$\Omega_5$};
\node[anchor=north west] at (28.5,-13.5) {$\Omega_6$};
\node[anchor=north west] at (58.5,6.5) {$\Omega_7$};
\node[anchor=north west] at (58.5,-3.5) {$\Omega_8$};
\node[anchor=north west] at (58.5,-13.5) {$\Omega_9$};

\draw[draw=black] (0,0) -- (26,0);
\draw[draw=black] (30,0) -- (56,0);
\draw[draw=black] (60,0) -- (86,0);
\draw[draw=black] (0,-10) -- (26,-10);
\draw[draw=black] (30,-10) -- (56,-10);
\draw[draw=black] (60,-10) -- (86,-10);
\draw[draw=black] (0,-20) -- (26,-20);
\draw[draw=black] (30,-20) -- (56,-20);
\draw[draw=black] (60,-20) -- (86,-20);

\draw[draw=black,thick,densely dotted] (7,10) -- (7,-20);
\draw[draw=black,thick,densely dotted] (15,10) -- (15,-20);
\draw[draw=black,thick,densely dotted] (19,10) -- (19,-20);
\draw[draw=black,thick,densely dotted] (37,10) -- (37,-20);
\draw[draw=black,thick,densely dotted] (45,10) -- (45,-20);
\draw[draw=black,thick,densely dotted] (49,10) -- (49,-20);
\draw[draw=black,thick,densely dotted] (67,10) -- (67,-20);
\draw[draw=black,thick,densely dotted] (75,10) -- (75,-20);
\draw[draw=black,thick,densely dotted] (79,10) -- (79,-20);
\node[anchor=south] at (7,10) {$p_k$};
\node[anchor=south] at (15,10) {$p_\ell$};
\node[anchor=south] at (19,10) {$p_{\ell+1}$};
\node[anchor=south] at (37,10) {$p_k$};
\node[anchor=south] at (45,10) {$p_\ell$};
\node[anchor=south] at (49,10) {$p_{\ell+1}$};
\node[anchor=south] at (67,10) {$p_k$};
\node[anchor=south] at (75,10) {$p_\ell$};
\node[anchor=south] at (79,10) {$p_{\ell+1}$};

\draw[draw=black,fill=white,thick] (7,0) circle (0.75);
\draw[draw=black,fill=white,thick] (15,0) circle (0.75);
\draw[draw=black,fill=white,thick] (19,0) circle (0.75);
\draw[draw=black,fill=white,thick] (37,0) circle (0.75);
\draw[draw=black,fill=white,thick] (45,0) circle (0.75);
\draw[draw=black,fill=white,thick] (49,0) circle (0.75);
\draw[draw=black,fill=white,thick] (67,0) circle (0.75);
\draw[draw=black,fill=white,thick] (75,0) circle (0.75);
\draw[draw=black,fill=white,thick] (79,0) circle (0.75);
\draw[draw=black,fill=white,thick] (7,-10) circle (0.75);
\draw[draw=black,fill=white,thick] (15,-10) circle (0.75);
\draw[draw=black,fill=white,thick] (19,-10) circle (0.75);
\draw[draw=black,fill=white,thick] (37,-10) circle (0.75);
\draw[draw=black,fill=white,thick] (45,-10) circle (0.75);
\draw[draw=black,fill=white,thick] (49,-10) circle (0.75);
\draw[draw=black,fill=white,thick] (67,-10) circle (0.75);
\draw[draw=black,fill=white,thick] (75,-10) circle (0.75);
\draw[draw=black,fill=white,thick] (79,-10) circle (0.75);
\draw[draw=black,fill=white,thick] (7,-20) circle (0.75);
\draw[draw=black,fill=white,thick] (15,-20) circle (0.75);
\draw[draw=black,fill=white,thick] (19,-20) circle (0.75);
\draw[draw=black,fill=white,thick] (37,-20) circle (0.75);
\draw[draw=black,fill=white,thick] (45,-20) circle (0.75);
\draw[draw=black,fill=white,thick] (49,-20) circle (0.75);
\draw[draw=black,fill=white,thick] (67,-20) circle (0.75);
\draw[draw=black,fill=white,thick] (75,-20) circle (0.75);
\draw[draw=black,fill=white,thick] (79,-20) circle (0.75);

\draw[draw=black,ultra thick] (5,0) arc (180:0:2);
\draw[draw=black,ultra thick] (13,-10) arc (180:0:2);
\draw[draw=black,ultra thick] (17,-20) arc (180:0:2);
\draw[draw=black,ultra thick] (35,0) arc (180:90:2) -- (45,2) arc (90:0:2);
\draw[draw=black,ultra thick] (43,-10) arc (180:90:2) -- (49,-8) arc (90:0:2);
\draw[draw=black,ultra thick] (35,-20) arc (180:90:2) -- (49,-18) arc (90:0:2);
\draw[draw=black,ultra thick] (61,0) arc (180:0:2);
\draw[draw=black,ultra thick] (69,-10) arc (180:0:2);
\draw[draw=black,ultra thick] (81,-20) arc (180:0:2);
\end{tikzpicture}
\end{center}
\caption{Nine classes of upper arcs}
\label{fig:omega}
\end{figure}
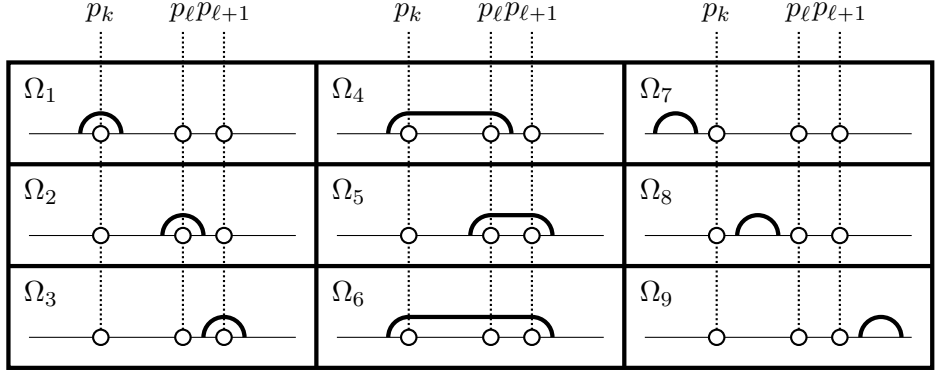

Let $\upbA$ denote the set of the upper arcs of $\calL$.
We partition $\upbA$ in several subsets, as illustrated in Figure~\ref{fig:omega}:
\begin{itemize}
\item $\Omega_1 = \{A \in \upbA : k \in \pi_\calL(A) \text{ and } \ell,\ell+1 \notin \pi_\calL(A)\}$,
\item $\Omega_2 = \{A \in \upbA : \ell \in \pi_\calL(A) \text{ and } k, \ell+1 \notin \pi_\calL(A)\}$,
\item $\Omega_3 = \{A \in \upbA : \ell+1 \in \pi_\calL(A) \text{ and } k, \ell \notin \pi_\calL(A)\}$,
\item $\Omega_4 = \{A \in \upbA : k,\ell \in \pi_\calL(A) \text{ and } \ell+1 \notin \pi_\calL(A)\}$,
\item $\Omega_5 = \{A \in \upbA : \ell,\ell+1 \in \pi_\calL(A) \text{ and } k \notin \pi_\calL(A)\}$,
\item $\Omega_6 = \{A \in \upbA : k,\ell,\ell+1 \in \pi_\calL(A)\}$,
\item $\Omega_7 = \{A \in \upbA : \pi_\calL(A) \subseteq \{0,\ldots,k-1\}\}$,
\item $\Omega_8 = \{A \in \upbA : \pi_\calL(A) \subseteq \{k+1,\ldots,\ell-1\}\}$, and
\item $\Omega_9 = \{A \in \upbA : \pi_\calL(A) \subseteq \{\ell+2,\ldots,n\}\}$.
\end{itemize}

We first show that the sets $\Omega_3$, $\Omega_5$ and $\Omega_9$ are empty.
If $\ell = n$, then of course $\Omega_5 = \Omega_3 = \emptyset$.
If $\ell < n$, then the requirement $4$ indicates that $k \in \pi_\calL(\upA_+(p_{\ell+1},\calL))$.
This proves that all upper arcs that cover $p_{\ell+1}$ also cover $p_k$, whence $\Omega_5 = \Omega_3 = \emptyset$.

Then, the requirement $3$ indicates that no puncture $p_i$ with $i \geq \ell+2$ belongs to a bigon of $\calL$.
Hence, for all arcs $\calA$ of $\calL$, Lemma~\ref{lem:left-endpoint-is-left} proves that
$e_A < p_{\ell+1}$, where $e_A$ is the leftmost endpoint of $\calA$. This proves that
$\calA$ cannot belong to the set $\Omega_9$, i.e. that $\Omega_9 = \emptyset$.

In the following diagram, we use notations defined as follows.
Let $X$ and $Y$ be subsets of $\upbA$.
If a real point $p \in \RR$ is covered by all arcs $\calA \in X$, then we write $p \prec X$.
Similarly, if, for all arcs $\calA \in X$ and $\calB \in Y$, the arc $\calA$ is covered by $\calB$, then we write $X \prec Y$.
Using these notations, the relations shown in Figure~\ref{fig:omega-relations} are clear.

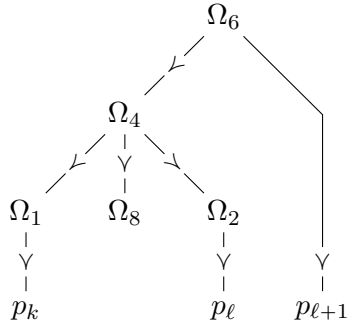
\begin{figure}[!ht]
\begin{center}
\begin{tikzpicture}[scale=1.3]
\node at (1,0) {$\Omega_6$};
\node at (0,-1) {$\Omega_4$};
\node at (-1,-2) {$\Omega_1$};
\node at (0,-2) {$\Omega_8$};
\node at (1,-2) {$\Omega_2$};
\node at (-1,-3) {$p_k$};
\node at (1,-3) {$p_\ell$};
\node at (2,-3) {$p_{\ell+1}$};

\draw[draw=black] (0.2,-0.8) -- (0.4,-0.6);
\draw[draw=black] (0.6,-0.4) -- (0.8,-0.2);
\node[rotate=45] at (0.5,-0.5) {\small $\prec$};

\draw[draw=black] (-0.8,-1.8) -- (-0.6,-1.6);
\draw[draw=black] (-0.4,-1.4) -- (-0.2,-1.2);
\node[rotate=45] at (-0.5,-1.5) {\small $\prec$};

\draw[draw=black] (0,-1.8) -- (0,-1.65);
\draw[draw=black] (0,-1.35) -- (0,-1.2);
\node[rotate=90] at (-0,-1.5) {\small $\prec$};

\draw[draw=black] (0.8,-1.8) -- (0.6,-1.6);
\draw[draw=black] (0.4,-1.4) -- (0.2,-1.2);
\node[rotate=135] at (0.5,-1.5) {\small $\prec$};

\draw[draw=black] (-1,-2.8) -- (-1,-2.65);
\draw[draw=black] (-1,-2.35) -- (-1,-2.2);
\node[rotate=90] at (-1,-2.5) {\small $\prec$};

\draw[draw=black] (1,-2.8) -- (1,-2.65);
\draw[draw=black] (1,-2.35) -- (1,-2.2);
\node[rotate=90] at (1,-2.5) {\small $\prec$};

\draw[draw=black] (2,-2.8) -- (2,-2.65);
\draw[draw=black] (2,-2.35) -- (2,-1) -- (1.2,-0.2);
\node[rotate=90] at (2,-2.5) {\small $\prec$};
\end{tikzpicture}
\end{center}
\caption{Blinding relations between sets $\Omega_i$}
\label{fig:omega-relations}
\end{figure}

Furthermore, note that, for each point $p \in \RR$, the coverage relation induces a total order
on the set of the upper arcs of $\calL$ that cover the point $p$.
Consequently, each of the sets $\Omega_1$, $\Omega_2$, $\Omega_4$ and $\Omega_6$ is totally ordered by the coverage relation.
In what follows, we set $\omega_i = \#\Omega_i$, and we denote by $A_1^i, \ldots, A_{\omega_i}^i$ the elements of $\Omega_i$, with respective endpoints $e_j^i < E_j^j$,
such that the arc $A_j^i$ is covered by the arc $A_k^i$ if and only if $j < k$.

It is then straightforward to see that the elements of $(p_\ell,p_{\ell+1}) \cap \calL$ are
\[p_\ell < E_1^2 < \ldots < E_{\omega_2}^2 < E_1^4 < \ldots < E_{\omega_4}^4 < p_{\ell+1}.\]
Hence, let $a_j^-$ and $a_j^+$ (for $1 \leq j \leq \omega_1$) and $\op_\ell$ be real numbers such that
\[E_{\omega_2}^2 < a_{\omega_1}^- < \ldots < a_1^- < \op_\ell <
a_1^+ < \ldots < a_{\omega_1}^+ < E_1^4,\]
as illustrated in the top picture of Figure~\ref{fig:order-endpoints-omega}.

\begin{figure}[!ht]
\begin{center}
\begin{tikzpicture}[scale=0.2]
\draw[draw=blackgray,fill=verypalegray,ultra thick] (-3.5,15) arc (180:90:15) -- (34.5,30) arc (90:0:15) -- (49.5,14.5) -- (45.5,14.5) --
(45.5,15) arc (0:90:13) -- (13.5,28) arc (90:180:13) -- (0.5,14.5) -- (-3.5,14.5) -- cycle;
\draw[draw=blackgray,fill=verypalegray,ultra thick] (2,15) arc (180:90:4.5) -- (10.5,19.5) arc (90:0:4.5) -- (15,14.5) -- (11,14.5) --
(11,15) arc (0:180:2.5) -- (6,14.5) -- (2,14.5) -- cycle;
\draw[draw=blackgray,fill=verypalegray,ultra thick] (16.5,15) arc (180:90:4.5) -- (25,19.5) arc (90:0:4.5) -- (29.5,14.5) -- (25.5,14.5) --
(25.5,15) arc (0:180:2.5) -- (20.5,14.5) -- (16.5,14.5) -- cycle;

\draw[draw=black,thick] (-4.5,15) -- (54.5,15);

\draw[draw=blackgray,ultra thick] (-3.5,15) arc (180:90:15) -- (34.5,30) arc (90:0:15) -- (49.5,14.5) -- (45.5,14.5) --
(45.5,15) arc (0:90:13) -- (13.5,28) arc (90:180:13) -- (0.5,14.5) -- (-3.5,14.5) -- cycle;
\draw[draw=blackgray,ultra thick] (2,15) arc (180:90:4.5) -- (10.5,19.5) arc (90:0:4.5) -- (15,14.5) -- (11,14.5) --
(11,15) arc (0:180:2.5) -- (6,14.5) -- (2,14.5) -- cycle;
\draw[draw=blackgray,ultra thick] (16.5,15) arc (180:90:4.5) -- (25,19.5) arc (90:0:4.5) -- (29.5,14.5) -- (25.5,14.5) --
(25.5,15) arc (0:180:2.5) -- (20.5,14.5) -- (16.5,14.5) -- cycle;

\draw[draw=black,ultra thick] (20,15) arc (180:0:3);
\draw[draw=black,ultra thick] (17,15) arc (180:90:4) -- (25,19) arc (90:0:4);
\draw[draw=black,ultra thick] (5.5,15) arc (180:0:3);
\draw[draw=black,ultra thick] (2.5,15) arc (180:90:4) -- (10.5,19) arc (90:0:4);
\draw[draw=black,ultra thick] (0,15) arc (180:90:13.5) -- (32.5,28.5) arc (90:0:13.5);
\draw[draw=black,ultra thick] (-3,15) arc (180:90:14.5) -- (34.5,29.5) arc (90:0:14.5);

\draw[draw=blackgray,fill=verypalegray,ultra thick] (-3.5,-10) arc (180:90:15) -- (34.5,5) arc (90:0:15) -- (49.5,-10.5) -- (45.5,-10.5) --
(45.5,-10) arc (0:90:13) -- (13.5,3) arc (90:180:13) -- (0.5,-10.5) -- (-3.5,-10.5) -- cycle;
\draw[draw=blackgray,fill=verypalegray,ultra thick] (15,-10) arc (180:90:6) -- (25,-4) arc (90:0:6) arc (180:270:4.5) -- (39.5,-14.5) arc (270:360:4.5)
arc (0:90:11.5) -- (13.5,1.5) arc (90:180:11.5) -- (2,-10.5) -- (6,-10.5) -- (6,-10) arc (180:90:9.5) -- (30.5,-0.5) arc (90:0:9.5) arc (360:180:2.5)
arc (0:90:8) -- (19,-2) arc (90:180:8) -- (11,-10.5) -- (15,-10.5) -- cycle;
\draw[draw=blackgray,fill=verypalegray,ultra thick] (16.5,-10) arc (180:90:4.5) -- (25,-5.5) arc (90:0:4.5) -- (29.5,-10.5) -- (25.5,-10.5) --
(25.5,-10) arc (0:180:2.5) -- (20.5,-10.5) -- (16.5,-10.5) -- cycle;

\draw[draw=black,thick] (-4.5,-10) -- (54.5,-10);

\draw[draw=blackgray,ultra thick] (-3.5,-10) arc (180:90:15) -- (34.5,5) arc (90:0:15) -- (49.5,-10.5) -- (45.5,-10.5) --
(45.5,-10) arc (0:90:13) -- (13.5,3) arc (90:180:13) -- (0.5,-10.5) -- (-3.5,-10.5) -- cycle;
\draw[draw=blackgray,ultra thick] (15,-10) arc (180:90:6) -- (25,-4) arc (90:0:6) arc (180:270:4.5) -- (39.5,-14.5) arc (270:360:4.5)
arc (0:90:11.5) -- (13.5,1.5) arc (90:180:11.5) -- (2,-10.5) -- (6,-10.5) -- (6,-10) arc (180:90:9.5) -- (30.5,-0.5) arc (90:0:9.5) arc (360:180:2.5)
arc (0:90:8) -- (19,-2) arc (90:180:8) -- (11,-10.5) -- (15,-10.5) -- cycle;
\draw[draw=blackgray,ultra thick] (16.5,-10) arc (180:90:4.5) -- (25,-5.5) arc (90:0:4.5) -- (29.5,-10.5) -- (25.5,-10.5) --
(25.5,-10) arc (0:180:2.5) -- (20.5,-10.5) -- (16.5,-10.5) -- cycle;
\draw[draw=blackgray,ultra thick] (31,-10) -- (35,-10);
\draw[draw=blackgray,ultra thick] (40,-10) -- (44,-10);

\draw[draw=black,ultra thick] (20,-10) arc (180:0:3);
\draw[draw=black,ultra thick] (17,-10) arc (180:90:4) -- (25,-6) arc (90:0:4);
\draw[draw=black,ultra thick] (14.5,-10) arc (180:90:6.5) -- (25,-3.5) arc (90:0:6.5) arc (180:270:4) -- (39.5,-14) arc
(270:360:4) arc (0:90:11) -- (13.5,1) arc (90:180:11);
\draw[draw=black,ultra thick] (11.5,-10) arc (180:90:7.5) -- (27,-2.5) arc (90:0:7.5) arc (180:360:3) arc (0:90:10) -- (15.5,0) arc (90:180:10);
\draw[draw=black,ultra thick] (0,-10) arc (180:90:13.5) -- (32.5,3.5) arc (90:0:13.5);
\draw[draw=black,ultra thick] (-3,-10) arc (180:90:14.5) -- (34.5,4.5) arc (90:0:14.5);

\draw[draw=black,fill=black,thick] (26,15) circle (0.25);
\draw[draw=black,fill=black,thick] (29,15) circle (0.25);
\draw[draw=black,fill=black,thick] (31.5,15) circle (0.25);
\draw[draw=black,fill=black,thick] (34.5,15) circle (0.25);
\draw[draw=black,fill=black,thick] (40.5,15) circle (0.25);
\draw[draw=black,fill=black,thick] (43.5,15) circle (0.25);
\draw[draw=black,fill=black,thick] (46,15) circle (0.25);
\draw[draw=black,fill=black,thick] (49,15) circle (0.25);

\draw[draw=black,fill=black,thick] (26,-10) circle (0.25);
\draw[draw=black,fill=black,thick] (29,-10) circle (0.25);
\draw[draw=black,fill=black,thick] (31.5,-10) circle (0.25);
\draw[draw=black,fill=black,thick] (34.5,-10) circle (0.25);
\draw[draw=black,fill=black,thick] (40.5,-10) circle (0.25);
\draw[draw=black,fill=black,thick] (43.5,-10) circle (0.25);
\draw[draw=black,fill=black,thick] (46,-10) circle (0.25);
\draw[draw=black,fill=black,thick] (49,-10) circle (0.25);

\draw[draw=black,densely dotted,thick] (26,13) -- (26,15);
\draw[draw=black,densely dotted,thick] (29,13) -- (29,15);
\draw[draw=black,densely dotted,thick] (31.5,13) -- (31.5,15);
\draw[draw=black,densely dotted,thick] (34.5,13) -- (34.5,15);
\draw[draw=black,densely dotted,thick] (40.5,13) -- (40.5,15);
\draw[draw=black,densely dotted,thick] (43.5,13) -- (43.5,15);
\draw[draw=black,densely dotted,thick] (46,13) -- (46,15);
\draw[draw=black,densely dotted,thick] (49,13) -- (49,15);

\draw[draw=black,densely dotted,thick] (26,10.7) -- (26,-10);
\draw[draw=black,densely dotted,thick] (29,10.7) -- (29,-10);
\draw[draw=black,densely dotted,thick] (31.5,10.7) -- (31.5,-10);
\draw[draw=black,densely dotted,thick] (34.5,10.7) -- (34.5,-10);
\draw[draw=black,densely dotted,thick] (40.5,10.7) -- (40.5,-10);
\draw[draw=black,densely dotted,thick] (43.5,10.7) -- (43.5,-10);
\draw[draw=black,densely dotted,thick] (46,10.7) -- (46,-10);
\draw[draw=black,densely dotted,thick] (49,10.7) -- (49,-10);

\node[anchor=north] at (26,13.25) {$E_1^2$};
\node[anchor=north] at (29,13.25) {$E_{\omega_2}^2$};
\node[anchor=north] at (31.5,13.35) {$a_{\omega_1}^-$};
\node[anchor=north] at (34.5,13.35) {$a_1^-$};
\node[anchor=north] at (40.5,13.35) {$a_1^+$};
\node[anchor=north] at (43.5,13.35) {$a_{\omega_1}^+$};
\node[anchor=north] at (46,13.25) {$E_1^4$};
\node[anchor=north] at (49,13.25) {$E_{\omega_4}^4$};

\node[anchor=south] at (-1.5,15) {$\ldots$};
\node[anchor=south] at (4,15) {$\ldots$};
\node[anchor=south] at (13,15) {$\ldots$};
\node[anchor=south] at (18.5,15) {$\ldots$};
\node[anchor=south] at (27.5,15) {$\ldots$};
\node[anchor=south] at (33,15) {$\ldots$};
\node[anchor=south] at (42,15) {$\ldots$};
\node[anchor=south] at (47.5,15) {$\ldots$};

\node[anchor=south] at (8.5,19.5) {$\Omega_1$};
\node[anchor=south] at (23,19.5) {$\Omega_2$};
\node[anchor=south] at (23,30) {$\Omega_4$};

\node[anchor=north] at (18.5,-10.5) {$\Omega_2$};
\node[anchor=south] at (23,5) {$\Omega_4$};
\node[anchor=north] at (4,-10.5) {arcs};
\node[anchor=north] at (4,-11.5) {$A_i^{1,3}$};
\node[anchor=north] at (13,-10.5) {arcs};
\node[anchor=north] at (13,-11.5) {$A_i^{1,1}$};
\node[anchor=north] at (37.5,-14.5) {arcs $A_i^{1,2}$};

\node[anchor=south] at (-1.5,-10) {$\ldots$};
\node[anchor=south] at (4,-10) {$\ldots$};
\node[anchor=south] at (13,-10) {$\ldots$};
\node[anchor=south] at (18.5,-10) {$\ldots$};
\node[anchor=south] at (27.5,-10) {$\ldots$};
\node[anchor=south] at (33,-10) {$\ldots$};
\node[anchor=south] at (42,-10) {$\ldots$};
\node[anchor=south] at (47.5,-10) {$\ldots$};

\draw[fill=white,draw=black,thick] (8.5,15) circle (1.75);
\node at (8.5,15) {\small$p_k$};
\draw[fill=white,draw=black,thick] (23,15) circle (1.75);
\node at (23,15) {\small$p_\ell$};
\draw[fill=white,draw=black,thick,densely dotted] (37.5,15) circle (1.75);
\node at (37.5,15) {\small$\op_\ell$};
\draw[fill=white,draw=black,thick] (52,15) circle (1.75);
\node at (52,15) {\small$p_{\ell+1}$};

\draw[fill=white,draw=black,thick,densely dotted] (8.5,-10) circle (1.75);
\node at (8.5,-10) {\small$p_k$};
\draw[fill=white,draw=black,thick] (23,-10) circle (1.75);
\node at (23,-10) {\small$p_\ell$};
\draw[fill=white,draw=black,thick] (37.5,-10) circle (1.75);
\node at (37.5,-10) {\small$\op_\ell$};
\draw[fill=white,draw=black,thick] (52,-10) circle (1.75);
\node at (52,-10) {\small$p_{\ell+1}$};

\node[anchor=south west] at (-4.5,30) {Lamination $\calL$:};
\node[anchor=south west] at (-4.5,5) {Lamination $\oL$:};
\end{tikzpicture}
\end{center}
\caption{Ordering $(p_\ell,p_{\ell+1}) \cap \calL$ -- Adding points $a_i^\pm$ and $\op_\ell$ -- Going from $\calL$ to $\oL$}
\label{fig:order-endpoints-omega}
\end{figure}

Hence, like in the proof of Lemma~\ref{lem:condition-1}, and as illustrated in
Figure~\ref{fig:order-endpoints-omega}, we obtain a lamination $\calL'$ isotopic to $\oL$ by appyling the following steps.
\begin{enumerate}[(i)]
\item We replace the puncture $p_k$ by the puncture $\op_\ell$.
\item We replace each (upper) arc $A_j^1$ in $\Omega_1$ by three arcs:
one upper arc $A_j^{1,1}$ with endpoints $E_j^1 < a_j^-$,
one lower arc $A_j^{1,2}$ with endpoints $a_j^- < a_j^+$, and
one upper arc $A_j^{1,3}$ with endpoints $e_j^1 < a_j^+$.
\item We do not modify other arcs or punctures of $\calL$.
\end{enumerate}

This lamination $\calL'$ is tight. Indeed, let $\calA$ be an arc of $\calL'$.
If $\calA$ is one of the arcs $A_j^{1,1}$, $A_j^{1,2}$ or $A_j^{1,3}$, then
$\calA$ covers either $p_\ell$ or $\op_\ell$ (or both).
If $\calA$ is also an upper arc of $\calL$, then $\calA \notin \Omega_1$, hence
$\calA$ covers some puncture $p_j$ with $j \neq k$.
Finally, if $\calA$ is also a lower arc of $\calL$, the requirement $1$
proves that $p_k$ does not belong to a lower bigon of $\calL$.
Hence, $\calA$ cannot cover such a (non-existent) bigon,
and therefore $\calA$ must also cover some puncture $p_j$ with $j \neq k$.
Consequently, we may assume that $\oL = \calL'$, and we prove now that
$\bR(\beta \lambda) = \lambda^{-1}$.

Let $\calB$ be the rightmost bigon of $\oL$, and let $\op_i$ be the puncture of $\oL$ that is covered by $\calB$,
and let us assume that $\calB \neq A_1^{1,2}$, i.e. that $i \geq \ell+1$.
If $i \geq \ell+1$, then $\calB$ cannot be one of the arcs $A_j^{1,1}$, $A_j^{1,2}$ or $A_j^{1,3}$,
hence $\calB$ is also a bigon of $\calL$.
Due to the requirement $3$, it follows that $i = \ell+1$, and
the requirement $4$ then shows that $\calB$ must be a lower bigon.

The requirement $5$ allows us to conclude that the left upper arc of $p_{\ell+1}$
covers the puncture $p_\ell$ but not the puncture $p_k$, which proves that $\Omega_4 = \emptyset$.
The requirement $2$ proves that $\Omega_1 \neq \emptyset$. Consequently, like all lower arcs of $\oL$
that cover the puncture $p_{\ell+1}$, the arc $\calB$ also covers $\op_\ell$,
which contradicts that it is a bigon.
Informally, this means that $\calB$ was a bigon of $\calL$ and that the new arc $A_1^{1,2}$
was ``inserted'' inside $\calB$, as illustrated in Figure~\ref{fig:i=ell+1}.
Hence, $A_1^{1,2}$ is the rightmost bigon of $\oL$.

\begin{figure}[!ht]
\begin{center}
\begin{tikzpicture}[scale=0.33]
\draw[draw=black,thick] (4,0) -- (20,0);

\draw[draw=black,ultra thick] (5,0) arc (180:0:2);
\draw[draw=black,ultra thick] (10,0) arc (180:360:4.5);
\draw[fill=white,draw=black,thick] (7,0) circle (1.65);
\node at (7,0) {\small$p_k$};
\draw[fill=white,draw=black,thick,densely dotted] (13,0) circle (1.65);
\node at (13,0) {\small$\op_\ell$};
\draw[fill=white,draw=black,thick] (17,0) circle (1.65);
\node at (17.05,0.5) {\small$\op_{\ell+1}$};
\node at (17.05,-0.25) {\small$=$};
\node at (17.05,-0.7) {\small$p_{\ell+1}$};

\draw[->,>=stealth',draw=black,ultra thick] (21,0) -- (24,0);

\draw[draw=black,thick] (25,0) -- (41,0);

\draw[draw=black,ultra thick] (26,0) arc (180:0:5) arc (360:180:2) arc (0:180:1);
\draw[draw=black,ultra thick] (31,0) arc (180:360:4.5);
\draw[fill=white,draw=black,thick,densely dotted] (28,0) circle (1.65);
\node at (28,0) {\small$p_k$};
\draw[fill=white,draw=black,thick] (34,0) circle (1.65);
\node at (34,0) {\small$\op_\ell$};
\draw[fill=white,draw=black,thick] (38,0) circle (1.65);
\node at (38.05,0.5) {\small$\op_{\ell+1}$};
\node at (38.05,-0.25) {\small$=$};
\node at (38.05,-0.7) {\small$p_{\ell+1}$};

\node[anchor=south] at (7,2) {$A_1^1$};
\node[anchor=north] at (14.5,-4.5) {$\calB$};
\node[anchor=north] at (31,5.1) {$A_1^{1,3}$};
\node[anchor=south] at (31,0.9) {$A_1^{1,1}$};
\node[anchor=north] at (34.8,-1.8) {$A_1^{1,2}$};
\node[anchor=north] at (35.5,-4.5) {$\calB$};

\node[anchor=south west] at (6,5.2) {Lamination $\calL$:};
\node[anchor=south west] at (23,5.2) {Lamination $\oL$:};
\end{tikzpicture}
\end{center}
\caption{From $\calL$ to $\oL$ when $p_{\ell+1}$ belongs to a lower bigon of $\calL$}
\label{fig:i=ell+1}
\end{figure}

Finally, according to the requirement $2$, two cases are possible.
\begin{itemize}
\item If $\pi_\beta(k,+,\uparrow) = \{0,\ldots,k\}$, then $A_1^1 = \upA_+(p_k,\calL)$,
and therefore both $A_1^1$ and $A_1^{1,3}$ cover the puncture $p_0 = \op_0$.
Moreover, the interval $(p_k,E_1^1)$ contains no endpoint of any arc of $\calL$ nor of $\oL$.
Hence, the lamination $\calL$ is obtained from $\oL$ by sliding the puncture $\op_\ell$ along the arc $A_1^{1,1} = \upA_-(\op_\ell,\oL)$,
then merging arcs of $\oL$.
\item If $\pi_\beta(k,-,\uparrow) \subseteq \{k,\ldots,\ell-1\}$, then 
$A_1^1 = \upA_-(p_k,\calL)$, and therefore
$A_{1,3}$ does not cover the puncture $p_0 = \op_0$.
Moreover, the interval $(e_1^1,p_k)$ contains no endpoint of any arc of $\calL$ nor of $\oL$.
The lamination $\calL$ is therefore obtained from $\oL$ by sliding the puncture $\op_\ell$
along the arc $A_1^{1,3} = \upA_+(\op_\ell,\oL)$, then merging arcs of $\oL$.
\end{itemize}
In both cases, it follows that 
$\bR(\beta \lambda) = \lambda^{-1}$, which completes the proof.
\end{proof}

\subsection{Proving Proposition~\ref{pro:automata-state}}

Having proved Proposition~\ref{pro:caracterisation-extension}
in two steps, we wish to prove Proposition~\ref{pro:automata-state}.
However, we first need to introduce new notions and results.

\begin{dfn}{Children of an arc}
Let $\calL$ be a lamination, let $\calA$ and $\calB$ be two arcs of $\calL$,
and let $p$ be a puncture of $\calL$.

We say that $\calA$ is a \emph{child} of $\calB$ if (i) $\calA$ is covered by $\calB$
and if (ii) for all arcs $\calC$ of $\calL$ that cover $\calA$,
either $\calB = \calC$ or $\calC$ covers $\calB$.
Similarly, we say that $p$ is a \emph{child} of $\calB$ if
(i) $p$ is covered by $\calB$ and if (ii) for all arcs $\calC$ of $\calL$ that cover $p$,
either $\calB = \calC$ or $\calC$ covers $\calB$.
\label{dfn:children}
\end{dfn}

\begin{dfn}{Cells and boundaries}
Let $\calL$ be a lamination.
We call \emph{cell} of $\calL$ each finite connected component of the set
$\CC \setminus (\calL \cup \RR)$.

In addition, we call \emph{arc boundary} of a cell $\calC$
each arc of $\calL$ that belongs to the boundary $\partial C$, and
\emph{real boundary} of $\calC$
each connected segment of the set $\RR \cap \partial C$.
Observe that one arc boundary of $\calC$ covers all the other boundaries of $\calC$:
we call that boundary \emph{parent boundary} of $\calC$, and the other
arc boundaries of $\calC$ are called \emph{children boundaries} of $\calC$.

Finally, we say that $\calC$ is an \emph{upper} cell it is contained in the upper half-plane
$\{z \in \CC : \mathrm{Im}(z) \geq 0\}$, and a \emph{lower} cell otherwise.
\end{dfn}

Figure~\ref{fig:cell-boundaries-1} shows a cell of some lamination, as well as its (arc and real) boundaries.

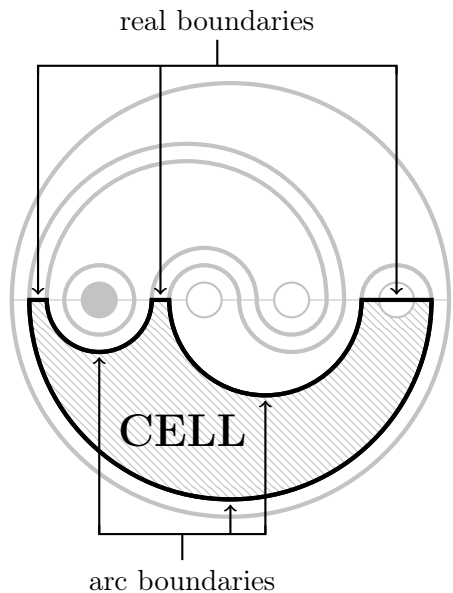
\begin{figure}[!ht]
\begin{center}
\begin{tikzpicture}[scale=0.23]
\draw[draw=black,pattern=north west lines, pattern color=gray,ultra thick] (28,0) arc (360:180:11.5);

\draw[draw=black,fill=white,ultra thick] (12,0) arc (360:180:3);
\draw[draw=black,fill=white,ultra thick] (13,0) arc (180:360:5.5);

\draw[draw=gray] (4,0) -- (29,0);

\draw[fill=gray,draw=gray,thick] (9,0) circle (1);

\draw[draw=gray,ultra thick] (11,0) arc (0:360:2);
\draw[draw=gray,ultra thick] (22,0) arc (0:180:8);
\draw[draw=gray,ultra thick] (23,0) arc (0:180:9);
\draw[draw=gray,ultra thick] (29,0) arc (0:360:12.5);

\draw[draw=gray,ultra thick] (12,0) arc (180:0:3);
\draw[draw=gray,ultra thick] (13,0) arc (180:0:2);
\draw[draw=gray,ultra thick] (24,0) arc (180:0:2);

\draw[draw=gray,ultra thick] (17,0) arc (180:360:3);
\draw[draw=gray,ultra thick] (18,0) arc (180:360:2);

\draw[fill=white,draw=gray,thick] (15,0) circle (1);
\draw[fill=white,draw=gray,thick] (20,0) circle (1);
\draw[fill=white,draw=gray,thick] (26,0) circle (1);

\draw[draw=black,ultra thick] (28,0) arc (360:180:11.5) -- (6,0) arc (180:360:3) -- (13,0) arc (180:360:5.5) -- cycle;

\draw[->,draw=black,thick] (5.5,13.5) -- (5.5,0.3);
\draw[->,draw=black,thick] (12.5,13.5) -- (12.5,0.3);
\draw[->,draw=black,thick] (26,13.5) -- (26,0.3);
\draw[draw=black,thick] (5.5,13) -- (5.5,13.5) -- (26,13.5) -- (26,13);
\draw[draw=black,thick] (15.75,13.5) -- (15.75,15);

\draw[->,draw=black,thick] (16.5,-13.5) -- (16.5,-11.8);
\draw[->,draw=black,thick] (9,-13.5) -- (9,-3.3);
\draw[->,draw=black,thick] (18.5,-13.5) -- (18.5,-5.8);
\draw[draw=black,thick] (9,-13) -- (9,-13.5) -- (18.5,-13.5) -- (18.5,-13);
\draw[draw=black,thick] (13.75,-13.5) -- (13.75,-15);

\node at (13.75,-7.5) {\LARGE \textbf{CELL}};
\node[anchor=south] at (15.75,15) {real boundaries};
\node[anchor=north] at (13.75,-15) {arc boundaries};
\end{tikzpicture}
\end{center}
\caption{A cell and its boundaries}
\label{fig:cell-boundaries-1}
\end{figure}

\begin{dfn}{Cell map}
Let $\calL$ be a tight lamination.
The \emph{cell map} of the lamination $\calL$
is the bipartite planar map (i.e. an embedding of a
planar graph into the plane) obtained as follows:
\begin{itemize}
\item inside each cell $\calC$ of $\calL$, we draw a vertex $v_\calC$;
\item for each cell $\calC$ of $\calL$ and each real boundary $B$ of $\calC$,
we draw, inside the cell $\calC$ itself, one half-edge between the vertex $v_\calC$ and the midpoint of the real boundary $B$,
so that the half-edges drawn inside of $\calC$ do not cross each other;
\item each real boundary $B$ belongs to one upper cell $\calC$ and one lower cell $\calC'$:
we merge the half-edges that link the midpoint of $B$ to the vertices $v_\calC$ and $v_{\calC'}$,
thereby obtaining one edge between $v_\calC$ and $v_{\calC'}$.
\end{itemize}
\end{dfn}

Note that the cell map is not supposed to be connected nor simple (and, actually,
is \emph{never} connected nor simple), although it does not contain loops.

Figure~\ref{fig:cell-map} shows the cell map of the lamination $\calL$,
whose edges are black lines and whose vertices are white circles.
We highlighted one of the connected components of the map
by drawing its edges with dotted lines (whereas the other edges are drawn with plain lines).

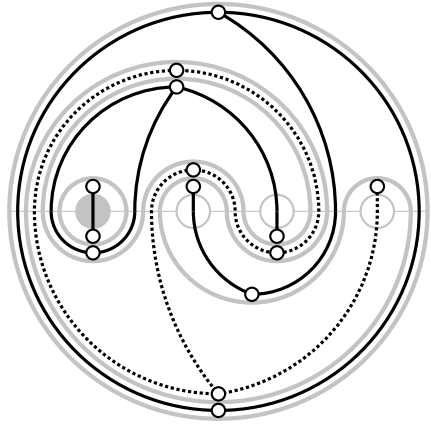
\begin{figure}[!ht]
\begin{center}
\begin{tikzpicture}[scale=0.22]
\draw[draw=gray] (4,0) -- (29,0);

\draw[fill=gray,draw=gray,thick] (9,0) circle (1);

\draw[draw=gray,ultra thick] (11,0) arc (0:360:2);
\draw[draw=gray,ultra thick] (22,0) arc (0:180:8);
\draw[draw=gray,ultra thick] (23,0) arc (0:180:9);
\draw[draw=gray,ultra thick] (29,0) arc (0:360:12.5);

\draw[draw=gray,ultra thick] (12,0) arc (180:0:3);
\draw[draw=gray,ultra thick] (13,0) arc (180:0:2);
\draw[draw=gray,ultra thick] (24,0) arc (180:0:2);

\draw[draw=gray,ultra thick] (12,0) arc (360:180:3);
\draw[draw=gray,ultra thick] (13,0) arc (180:360:5.5);
\draw[draw=gray,ultra thick] (17,0) arc (180:360:3);
\draw[draw=gray,ultra thick] (18,0) arc (180:360:2);
\draw[draw=gray,ultra thick] (28,0) arc (360:180:11.5);

\draw[fill=white,draw=gray,thick] (15,0) circle (1);
\draw[fill=white,draw=gray,thick] (20,0) circle (1);
\draw[fill=white,draw=gray,thick] (26,0) circle (1);

\draw[draw=black,very thick] (28.5,0) arc (0:360:12);
\draw[draw=black,very thick] (6.5,0) arc (180:360:2.5);
\draw[draw=black,very thick] (23.5,0) arc (360:270:5);
\draw[draw=black,very thick] (6.5,0) arc (180:90:7.5);
\draw[draw=black,very thick] (9,1.5) -- (9,-1.5);
\draw[draw=black,very thick] (15,1.5) -- (15,0);
\draw[draw=black,very thick] (20,-1.5) -- (20,0);
\draw[draw=black,very thick,densely dotted] (26,1.5) -- (26,0)
arc (0:-81.63:11.12) arc (270:180:11) arc (180:0:8.5) arc (360:180:2.5) arc (0:180:2.5) arc (-180:-140:17.13);

\draw[draw=black,very thick] (11.5,0) arc (180:143.13:12.5);
\draw[draw=black,very thick] (20,0) arc (0:77.32:7.69);
\draw[draw=black,very thick] (15,0) arc (-180:-110.02:5.32);
\draw[draw=black,very thick] (23.5,0) arc (0:60.51:13.79);

\bvertex{9}{1.5}
\bvertex{9}{-1.5}
\bvertex{14}{7.5}
\bvertex{14}{8.5}
\bvertex{16.5}{12}
\bvertex{16.5}{-12}
\bvertex{15}{2.5}
\bvertex{15}{1.5}
\bvertex{26}{1.5}
\bvertex{9}{-2.5}
\bvertex{18.5}{-5}
\bvertex{20}{-2.5}
\bvertex{20}{-1.5}
\bvertex{16.5}{-11}
\end{tikzpicture}
\end{center}
\caption{Cell map of a tight lamination}
\label{fig:cell-map}
\end{figure}

The cell map is not connected,
but the relative positions of its connected components is important.
Indeed, the topological properties of tight laminations are reflected on their cell maps and arc trees.

\begin{lem}
Let $\calL$ be a tight lamination.
No vertex of its cell map has degree $4$ or more.
\label{lem:lasso-map}
\end{lem}

\begin{proof}
Let $\calL_0,\ldots,\calL_n$ be the $n+1$ curves of the lamination $\calL$,
and let $\calM$ be the cell map of $\calL$.
Then, let $\calZ_0$ be the inner area defined by the curve $\calL_0$
and, for $1 \leq i \leq n$, let $\calZ_i$ be the area enclosed between the curves $\calL_{i-1}$ and $\calL_i$.
First, each area $\calZ_i$ is connected, hence the set $C_i := \{v_\calC : \calC \subseteq \calZ_i\}$ is a
connected subset of $\calM$.
Second, two cells $\calC$ and $\calC'$ that belong to two distinct areas $\calZ_i$ and $\calZ_j$ cannot have any common real boundary,
i.e. the vertices $v_\calC$ and $v_{\calC'}$ cannot be neighbors.
Therefore, the sets $C_i$ are the connected components of $\calM$.

Since $\|\calL\|$ is minimal, the curve $\calL_0$ must consist of two bigons with the same real projection,
hence $C_0$ consists of two vertices and one edge.
In addition, each component $C_i$ with $1 \leq i \leq n$ must contain one unique cycle,
which encloses the components $C_0,\ldots,C_{i-1}$:
indeed, by construction of the areas $\calZ_j$,
the set $\bigcup_{j \leq i-1} \calZ_j$ is the unique ``hole'' in the area $\calZ_i$.

Finally, for $1 \leq i \leq n$ and for each vertex $v_\calC \in C_i$ of degree $1$,
the cell $\calC$ has exactly one arc boundary, which must be a bigon, and exactly one real boundary,
which must contain the unique puncture of $\calZ_i$.
If $v_{\calC'}$ is another vertex of $C_i$ with degree $1$, then $\calC$ and $\calC'$ must therefore share the same real boundary.
Hence, $C_i$ consists of the two vertices $v_\calC$ and $v_{\calC'}$ of degree $1$, which contradicts the fact that $C_i$ must contain one cycle.
This proves that $C_i$ contains one unique cycle and at most one vertex with degree $1$,
and it follows that $C_i$ cannot contain any vertex with degree $4$ or more,
which completes the proof.
\end{proof}

\begin{cor}
Let $\calL$ be a tight lamination and let $\calA$ be an arc of $\calL$.
The arc $\calA$ has at most three children, including at most two arcs.
\label{cor:2-arc-children-or-less}
\end{cor}

\begin{proof}
First, since every two punctures of $\calL$ must be separated by some curve $\calL_j$ of $\calL$,
it comes at once that at most one puncture may be a child of $\calA$.
Then, let $\calC$ be the cell of $\calL$ whose parent boundary is $\calA$,
and let $\calB_1, \ldots, \calB_k$ be the arcs of $\calL$ that are children of $\calA$.
Since the cell $\calC$ is of degree $k+1$, Lemma~\ref{lem:lasso-map} proves that $k \leq 2$, which completes the proof.
\end{proof}

\begin{proof}[Proof of Proposition~\ref{pro:automata-state}]
Proposition~\ref{pro:caracterisation-extension} implies that
knowing $\pi_\beta$ is sufficient to check whether $\bR(\beta \lambda) = \lambda^{-1}$.
Hence, knowing $\pi^2_\beta$ is also sufficient, which proves the first part of Proposition~\ref{pro:automata-state}.

We prove now the second part of Proposition~\ref{pro:automata-state} when $\lambda = [k \curvearrowright \ell]$.
First, if $\beta \neq \varepsilon$, note that $\pi_\beta^2 \neq \pi_\varepsilon^2$.
Indeed, the rightmost index of $\beta$ is some positive integer $i$, which means that either
$\pi_\beta(i,+,\uparrow) = \{i\}$ or $\pi_\beta(i,+,\downarrow) = \{i\}$, whereas
$\pi_\varepsilon(i,+,\updownarrow) = \{0,\ldots,i\}$.
Therefore, we may already define the partial function $\comp(\pi^2_\varepsilon,\cdot) : \lambda \mapsto \pi^2_{\lambda}$,
and there remains to build an appropriate function $\comp$ on pairs ($\pi^2_\beta,\lambda)$ such that $\beta \neq \varepsilon$.

Since $\bR(\beta \lambda) = \lambda^{-1}$, we know that neither $\beta$ nor $\beta \lambda$ is trivial:
let $\calL$ and $\oL$ be the (non-trivial) tight laminations that represent respectively the braids $\beta$ and $\beta \lambda$.
Observe that $\calL$ satisfies the requirements $1$ to $5$ of Lemmas~\ref{lem:condition-1}.

In addition, consider the functions $\psi : i \mapsto i - \mathbf{1}_{k < i}$, $\ol{\psi} : i \mapsto i - \mathbf{1}_{k < i \leq \ell}$ and
\[\arraycolsep=0.1pt\def\arraystretch{1.2}\begin{array}{lll}
\Psi & : I & \mapsto \{x : \psi(\min I) \leq x \leq \max I\} \text{ if } I \neq \emptyset \text{, or } \emptyset \text{ if } I = \emptyset; \\
\Psi^\ast & : I & \mapsto \{x : \psi(\min I) \leq x \leq \psi(\max I)\} \text{ if } I \neq \emptyset \text{, or } \emptyset \text{ if } I = \emptyset; \\
\Theta^\uparrow & : (I,J) & \mapsto (\Psi(I),\Psi(J)) \text{ if } \{k,\ell\} \subseteq I \text{, or } (\Psi^\ast(I),\emptyset) \text{ if } \{k,\ell\} \not\subseteq I; \\
\Theta^\downarrow & : (I,J) & \mapsto (\Psi(I),\Psi(J)) \text{ if } \{k,\ell\} \subseteq J \text{, or } (\Psi^\ast(I),\emptyset) \text{ if } \{k,\ell\} \not\subseteq J,
\end{array}\]
where $I$ and $J$ are subintervals of $\{0,\ldots,n\}$.
The functions $\psi$, $\ol{\psi}$, $\Psi$, $\Psi^\ast$, $\Theta^\uparrow$ and $\Theta^\downarrow$ will play a crucial role in computing $\pi^2_{\beta\lambda}$.
Intuitively, the functions $\psi$ and $\ol{\psi}$ are meant to reflect the fact that some punctures of $\calL$ and $\oL$ have different names:
$p_i = \op_{\psi(i)} = \op_{\ol{\psi}(i)}$ if $i < k$ or if $k < i \leq \ell$, and $p_i = \op_i = \op_{\ol{\psi}(i)}$ if $\ell < i$.

Then, remember how the lamination $\oL$ was drawn in the proof of Lemma~\ref{lem:condition-2}.
We split the set $\upbA$ of upper arcs of $\calL$ into six subsets $\Omega_1$, $\Omega_2$, $\Omega_4$, $\Omega_6$, $\Omega_7$ and $\Omega_8$,
replaced each arc $A_j^1 \in \Omega_1$ by three arcs $A_j^{1,1}$, $A_j^{1,2}$ and $A_j^{1,3}$, then
replaced the puncture $p_k$ by a new puncture $\op_\ell$, and did not modify any other arc or puncture.

For each arc $A_i^1$ of $\calL$ that belongs to $\Omega_1$, we have $\pi_\calL(A_i^1) = \{u,\ldots,v\}$ for some $u$ and $v$ (that depend on $i$).
It comes immediately that $\pi_\oL(A_i^{1,1}) = \{v,\ldots,\ell-1\}$, $\pi_\oL(A_i^{1,2}) = \{\ell\}$ and $\pi_\oL(A_i^{1,3}) = \{u,\ldots,\ell\}$.

Now, let $\calA$ be an arc of both $\calL$ and $\oL$, i.e. an arc of $\calL$ that does not belong to $\Omega_1$,
and let $e < E$ be the endpoints of $\calA$.
Lemma~\ref{lem:left-endpoint-is-left} proves that $e < \op_\ell$.
It follows that $\pi^2_{\oL}(\calA) = (\Psi^\ast(\pi_\calL(\calA)),\emptyset)$ if $E < \op_\ell$, and
that $\pi_{\oL}(\calA) = \Psi(\pi_\calL(\calA))$ if $E > \op_\ell$.

Furthermore, if $E > \op_\ell$, let $I, J$ be subintervals of $\{0,\ldots,n\}$ such that $\pi^2_\calL(\calA) = (I,J)$,
and let $p_m$ be the rightmost puncture covered by a bigon of $\calL$. We prove now that $J$ is non-empty.
Due to the requirement $3$ of Lemma~\ref{lem:condition-1}, we know that $m \leq \ell+1$.
If $m = \ell+1$, then the interval $(\op_\ell,p_{\ell+1})$ contains no endpoint of any arc of $\calL$, and therefore $E > p_{\ell+1}$, so that $J \neq \emptyset$;
if $m \leq \ell$, then of course $J \neq \emptyset$ as well.

Finally, let $\calB$ be the arc of $\calL$ with which $\calA$ shares its right endpoint $E$.
If $\calA$ is an upper arc, then $E > \op_\ell$ if and only if $\calA \in \Omega_4 \cup \Omega_6$, i.e. if and only if $\{k,\ell\} \subseteq \pi_\calL(\calA) = I$.
Consequently, if $\calA$ is a lower arc, then $E > \op_\ell$ if and only if $\{k,\ell\} \subseteq \pi_\calL(\rext(\calA)) = J$.
Overall, it follows that $\pi^2_{\oL}(A) = \Theta^\uparrow(\pi^2_\calL(\calA))$ for all upper arcs $\calA \notin \Omega_1$ of $\calL$, and that
$\pi^2_{\oL}(\calA) = \Theta^\downarrow(\pi^2_\calL(\calA))$ for all lower arcs $\calA$ of $\calL$.

The last remaining challenge is to identify the neighboring arcs of the punctures of $\oL$.
We do it, following a long enumeration of cases,
and thereby we compute $\pi^2_{\beta\lambda}$ as a function of $\pi^2_{\beta}$ and of $\lambda$, as follows.

\begin{enumerate}
\item Let $(\diamond,u,v)$ the unique triple in $\{(+,0,k)\} \cup \{(-,k,z) : k \leq z < \ell\}$
such that $\pi_\beta(k,\diamond,\uparrow) = \{u,\ldots,v\}$.
The arc $A_1^1$ is such that $\pi_\oL(A_1^1) = \{u,\ldots,v\}$, whence

\begin{eqnarray*}
\pi^2_{\beta\lambda}(\ell,+,\uparrow) & = & (\{u,\ldots,\ell\},\{\ell\}); \\
\pi^2_{\beta\lambda}(\ell,-,\uparrow) & = & (\{v,\ldots,\ell-1\},\emptyset); \\
\pi^2_{\beta\lambda}(\ell,\downarrow,\pm) & = & (\{\ell\},\{u,\ldots,\ell\}).
\end{eqnarray*}

\item First, recall that $\upA_+(p_{\ell+1},\calL) \in \Omega_6$.
Hence, let $x$ be the integer such that ${\pi_\beta(\ell+1,+,\uparrow)} = \{x,\ldots,\ell+1\}$:
we have $x \leq k$.
If $k \in \pi_\beta(\ell+1,-,\uparrow)$, then $p_{\ell+1} = \op_{\ell+1}$ has the same neighbor arcs in $\calL$ and in $\oL$.

If $k \notin \pi_\beta(\ell+1,-,\uparrow)$, then Figure~\ref{fig:compute-hard-case-thm-4.8} illustrates the case where $x < k$
(the case where $x = k$ is analogous).
First, observe that $\Omega_4 = \emptyset$.
Since some arcs must separate the punctures $p_k$, $p_\ell$ and $p_{\ell+1}$ in $\calL$, it follows that $\Omega_1$ and $\Omega_2$ are non-empty.
Consequently, the neighbor arcs of the puncture $\op_{\ell+1} = p_{\ell+1}$ in $\oL$ are

\begin{center}
\begin{tabular}{ll}
$\upA_+(\op_{\ell+1},\oL) = \upA_+(p_{\ell+1},\calL) = A_1^6$, & $\lowA_+(\op_{\ell+1},\oL) = \lowA_+(p_{\ell+1},\calL)$, \\
$\upA_-(\op_{\ell+1},\oL) = A_{\omega_1}^{1,3}$, & $\lowA_-(\op_{\ell+1},\oL) = A_{\omega_1}^{1,2}$,
\end{tabular}
\end{center}

Moreover, since $\Omega_4 = \emptyset$, and using Corollary~\ref{cor:2-arc-children-or-less},
the children of the arc $A_1^6$ in $\calL$ must be, from left to right:
the arc $A_{\omega_1}^1$, the arc $A_{\omega_2}^2$, and the puncture $p_{\ell+1}$.
Since $x \leq k$, it follows that $x = \min \pi_\calL(A_1^6) = \min \pi_\calL(A_{\omega_1}^1) = \min \pi_{\oL}(A_{\omega_1}^{1,3})$.

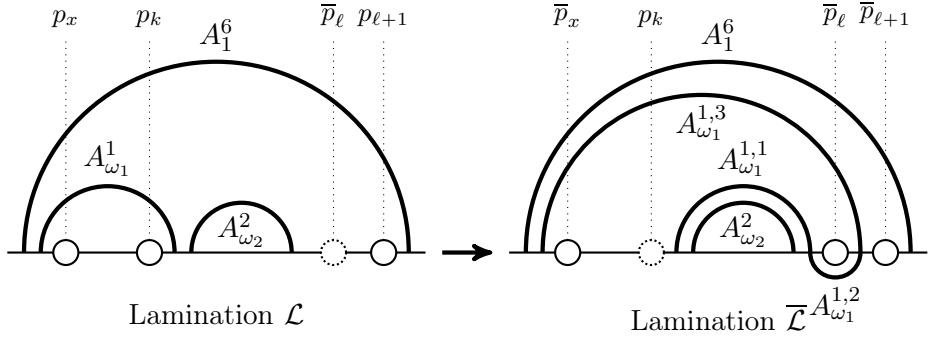
\begin{figure}[!ht]
\begin{center}
\begin{tikzpicture}[scale=0.22]
\draw[draw=black,thick] (0,0) -- (25,0);

\draw[draw=black,ultra thick] (1,0) arc (180:0:11.5);
\draw[draw=black,ultra thick] (2,0) arc (180:0:4);
\draw[draw=black,ultra thick] (11,0) arc (180:0:3);

\draw[draw=black,dotted] (3.5,0) -- (3.5,13);
\draw[draw=black,dotted] (8.5,0) -- (8.5,13);
\draw[draw=black,dotted] (19.5,0) -- (19.5,13);
\draw[draw=black,dotted] (22.5,0) -- (22.5,13);

\draw[fill=white,draw=black,thick] (3.5,0) circle (0.75);
\node[anchor=south] at (3.5,13) {\small$p_x$};
\draw[fill=white,draw=black,thick] (8.5,0) circle (0.75);
\node[anchor=south] at (8.5,13) {\small$p_k$};
\draw[fill=white,draw=black,thick,densely dotted] (19.5,0) circle (0.75);
\node[anchor=south] at (19.5,13) {\small$\op_\ell$};
\draw[fill=white,draw=black,thick] (22.5,0) circle (0.75);
\node[anchor=south] at (22.5,13) {\small$p_{\ell+1}$};

\draw[->,>=stealth',draw=black,ultra thick] (26,0) -- (29,0);

\draw[draw=black,thick] (30,0) -- (55,0);

\draw[draw=black,ultra thick] (31,0) arc (180:0:11.5);
\draw[draw=black,ultra thick] (32,0) arc (180:0:9.5) arc (360:180:1.5) arc (0:180:4);
\draw[draw=black,ultra thick] (41,0) arc (180:0:3);

\draw[draw=black,dotted] (33.5,0) -- (33.5,13);
\draw[draw=black,dotted] (38.5,0) -- (38.5,13);
\draw[draw=black,dotted] (49.5,0) -- (49.5,13);
\draw[draw=black,dotted] (52.5,0) -- (52.5,13);

\draw[fill=white,draw=black,thick] (33.5,0) circle (0.75);
\node[anchor=south] at (33.5,13) {\small$\op_x$};
\draw[fill=white,draw=black,thick,densely dotted] (38.5,0) circle (0.75);
\node[anchor=south] at (38.5,13) {\small$p_k$};
\draw[fill=white,draw=black,thick] (49.5,0) circle (0.75);
\node[anchor=south] at (49.5,13) {\small$\op_\ell$};
\draw[fill=white,draw=black,thick] (52.5,0) circle (0.75);
\node[anchor=south] at (52.5,13) {\small$\op_{\ell+1}$};

\node[anchor=south] at (12.5,11.5) {$A_1^6$};
\node[anchor=south] at (6,4) {$A_{\omega_1}^1$};
\node[anchor=north] at (14,3) {$A_{\omega_2}^2$};
\node[anchor=south] at (42.5,11.5) {$A_1^6$};
\node[anchor=south] at (44,4) {$A_{\omega_1}^{1,1}$};
\node[anchor=north] at (44,3) {$A_{\omega_2}^2$};
\node[anchor=north] at (49.5,-1.5) {$A_{\omega_1}^{1,2}$};
\node[anchor=north] at (41.5,9.5) {$A_{\omega_1}^{1,3}$};

\node[anchor=north] at (12.5,-2.5) {Lamination $\calL$};
\node[anchor=north] at (42.5,-2.5) {Lamination $\oL$};
\end{tikzpicture}
\end{center}
\caption{Computing $\pi_{\beta\lambda}^2(\ell+1,\pm,\updownarrow)$ when $k \notin \pi_\beta(\ell+1,-,\uparrow)$ --- assuming $z < k$}
\label{fig:compute-hard-case-thm-4.8}
\end{figure}

Adding these two cases, we obtain 
\vspace{-7mm}

\begin{eqnarray*}
\pi^2_{\beta\lambda}(\ell+1,+,\uparrow) & = & \Theta^\uparrow(\pi^2_\beta(\ell+1,+,\uparrow)); \\
\pi^2_{\beta\lambda}(\ell+1,+,\downarrow) & = & \Theta^\downarrow(\pi^2_\beta(\ell+1,+,\downarrow)); \\
\pi^2_{\beta\lambda}(\ell+1,-,\uparrow) & = & \Theta^\uparrow(\pi^2_\beta(\ell+1,-,\uparrow)) \text{ if } k \in \pi_\beta(\ell+1,-,\uparrow) \\
& & (\{z,\ldots,\ell\},\{\ell\}) \text{ if } k \notin \pi_\beta(\ell+1,-,\uparrow); \\
\pi^2_{\beta\lambda}(\ell+1,-,\downarrow) & = & \Theta^\downarrow(\pi^2_\beta(\ell+1,-,\downarrow)) \text{ if } k \in \pi_\beta(\ell+1,-,\uparrow) \\
& & (\{\ell\},\{z,\ldots,\ell\}) \text{ if } k \notin \pi_\beta(\ell+1,-,\uparrow).
\end{eqnarray*}

\item Observe that $\upA_+(p_\ell,\calL) \in \Omega_2 \cup \Omega_4$ and that
$\upA_-(p_\ell,\calL) \in \Omega_1 \cup \Omega_2 \cup \Omega_8$.
In addition, either $\upA_+(p_\ell,\calL) = \upA_-(p_\ell,\calL)$,
or $\upA_+(p_\ell,\calL)$ is the parent of $\upA_-(p_\ell,\calL)$.
Note that the former case arises if and only if $p_\ell$ belongs to an upper bigon of $\calL$ or,
equivalently, if $\upA_-(p_\ell,\calL) \in \Omega_2$.
Hence, let $y$ be the integer such that $\pi_\beta(\ell,+,\uparrow) = \{y,\ldots,\ell\}$.
In addition, if $\upA_-(p_\ell,\calL) \in \Omega_1 \cup \Omega_8$, let $z$ be the integer such that
$\pi_\beta(\ell,-,\uparrow) = \{z,\ldots,\ell-1\}$.

If $k \notin \pi_\beta(\ell,+,\uparrow)$, then $\upA_+(p_\ell,\calL) \in \Omega_2$ and
$\upA_-(p_\ell,\calL) \in \Omega_2 \cup \Omega_8$.
In this case, the puncture $p_\ell = \op_{\ell-1}$ has the same neighbor arcs in $\calL$ and in $\oL$.

If $k \in \pi_\beta(\ell,-,\uparrow)$, then $\upA_-(p_\ell,\calL) \in \Omega_1$ and
$\upA_+(p_\ell,\calL) \in \Omega_4$. It follows that
$\upA_-(p_\ell,\calL) = A_{\omega_1}^1$ and that $\Omega_2 = \emptyset$,
which shows that $\op_{\ell-1}$ belongs to an upper bigon of $\oL$.
Consequently, the neighbor arcs of the puncture $\op_{\ell-1} = p_\ell$ in $\oL$ are
\begin{center}
\begin{tabular}{ll}
$\upA_+(\op_{\ell-1},\oL) = A_{\omega_1}^{1,1}$, & $\lowA_+(\op_{\ell-1},\oL) = A_{\omega_1}^{1,2}$, \\
$\upA_-(\op_{\ell-1},\oL) = A_{\omega_1}^{1,1}$, & $\lowA_-(\op_{\ell-1},\oL) = \lowA_-(p_\ell,\calL)$.
\end{tabular}
\end{center}

Moreover, since $\upA_-(p_\ell,\calL) \in \Omega_1$, the integer $z$ is well-defined,
and satisfies the inequality $z \leq k$:
Figure~\ref{fig:compute-hard-case-thm-4.8-IIa} illustrates the case where $z < k$. It follows that
$z = \min \pi_\calL(A_{\omega_1}^1) = \min \pi_{\oL}(A_{\omega_1}^{1,3})$.

Finally, if $k \in \pi_\beta(\ell,+,\uparrow)$ and $k \notin \pi_\beta(\ell,-,\uparrow)$, then
$\upA_+(p_\ell,\calL) \in \Omega_4$ and $\upA_-(p_\ell,\calL) \in \Omega_8$.
It follows that $\Omega_2 = \emptyset$.
Consequently, the neighbor arcs of the puncture $\op_{\ell-1} = p_\ell$ in $\oL$ are
\begin{center}
\begin{tabular}{ll}
$\upA_+(\op_{\ell-1},\oL) = A_{\omega_1}^{1,1}$, & $\lowA_+(\op_{\ell-1},\oL) = A_{\omega_1}^{1,2}$, \\
$\upA_-(\op_{\ell-1},\oL) = \upA_-(p_\ell,\calL)$, & $\lowA_-(\op_{\ell-1},\oL) = \lowA_-(p_\ell,\calL)$,
\end{tabular}
\end{center}

Therefore, and using Corollary~\ref{cor:2-arc-children-or-less}, the children of the arc $\upA_+(p_\ell,\calL) = A_1^4$ in $\calL$
must be, from left to right: the arc $A_{\omega_1}^1$, the arc $\upA_-(p_\ell,\calL)$, and the puncture $p_\ell$.
Hence, both integers $y$ and $z$ are well-defined, and they satisfy the inequalities $y \leq k < z$:
Figure~\ref{fig:compute-hard-case-thm-4.8-IIb} illustrates the case where $y < k$.
It follows that
\begin{center}
\begin{tabular}{l}
$y = \min \pi_\calL(A_1^4) = \min \pi_\calL(A_{\omega_1}^1) = \min \pi_{\oL}(A_{\omega_1}^{1,3})$ and that \\
$z-1 = \min \pi_\calL(\upA_-(p_\ell,\calL))-1 = \max \pi_\calL(A_{\omega_1}^1) = \min \pi_{\oL}(A_{\omega_1}^{1,1})$.
\end{tabular}
\end{center}

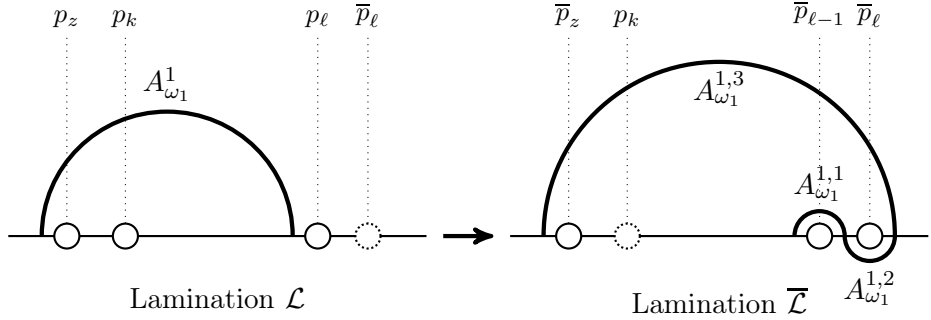
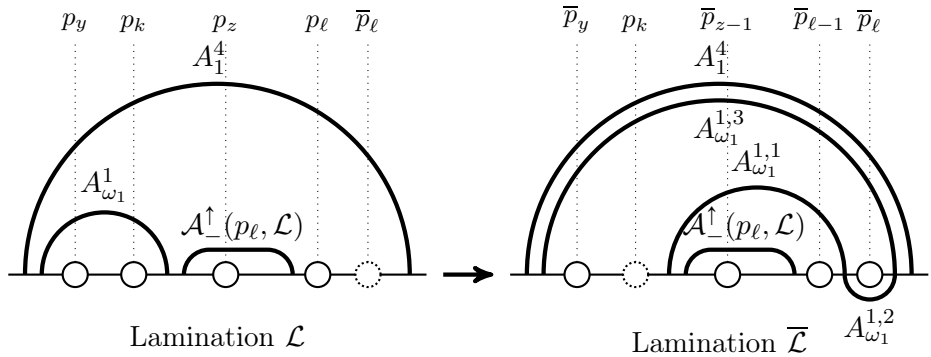
\begin{figure}[!ht]
\begin{center}
\begin{subfigure}[b]{\textwidth}
\begin{center}
\begin{tikzpicture}[scale=0.22]
\draw[draw=black,thick] (0,0) -- (25,0);

\draw[draw=black,ultra thick] (2,0) arc (180:0:7.5);

\draw[draw=black,dotted] (3.5,0) -- (3.5,12);
\draw[draw=black,dotted] (7,0) -- (7,12);
\draw[draw=black,dotted] (18.5,0) -- (18.5,12);
\draw[draw=black,dotted] (21.5,0) -- (21.5,12);

\draw[fill=white,draw=black,thick] (3.5,0) circle (0.75);
\node[anchor=south] at (3.5,12) {\small$p_z$};
\draw[fill=white,draw=black,thick] (7,0) circle (0.75);
\node[anchor=south] at (7,12) {\small$p_k$};
\draw[fill=white,draw=black,thick] (18.5,0) circle (0.75);
\node[anchor=south] at (18.5,12) {\small$p_\ell$};
\draw[fill=white,draw=black,thick,densely dotted] (21.5,0) circle (0.75);
\node[anchor=south] at (21.5,12) {\small$\op_\ell$};

\draw[->,>=stealth',draw=black,ultra thick] (26,0) -- (29,0);

\draw[draw=black,thick] (30,0) -- (55,0);

\draw[draw=black,ultra thick] (32,0) arc (180:0:10.5) arc (360:180:1.5) arc (0:180:1.5);

\draw[draw=black,dotted] (33.5,0) -- (33.5,12);
\draw[draw=black,dotted] (37,0) -- (37,12);
\draw[draw=black,dotted] (48.5,0) -- (48.5,12);
\draw[draw=black,dotted] (51.5,0) -- (51.5,12);

\draw[fill=white,draw=black,thick] (33.5,0) circle (0.75);
\node[anchor=south] at (33.5,12) {\small$\op_z$};
\draw[fill=white,draw=black,thick,densely dotted] (37,0) circle (0.75);
\node[anchor=south] at (37,12) {\small$p_k$};
\draw[fill=white,draw=black,thick] (48.5,0) circle (0.75);
\node[anchor=south] at (48.5,12) {\small$\op_{\ell-1}$};
\draw[fill=white,draw=black,thick] (51.5,0) circle (0.75);
\node[anchor=south] at (51.5,12) {\small$\op_\ell$};

\node[anchor=south] at (9.5,7.5) {$A_{\omega_1}^1$};
\node[anchor=south] at (48.5,1.5) {$A_{\omega_1}^{1,1}$};
\node[anchor=north] at (51.5,-1.5) {$A_{\omega_1}^{1,2}$};
\node[anchor=north] at (42.5,10.5) {$A_{\omega_1}^{1,3}$};

\node[anchor=north] at (12.5,-2.5) {Lamination $\calL$};
\node[anchor=north] at (42.5,-2.5) {Lamination $\oL$};
\end{tikzpicture}
\end{center}
\caption{Case \#1: $k\in \pi_\beta(\ell,-,\uparrow)$ --- assuming $z < k$}
\label{fig:compute-hard-case-thm-4.8-IIa}
\end{subfigure}

\medskip

\begin{subfigure}[b]{\textwidth}
\begin{center}
\begin{tikzpicture}[scale=0.22]
\draw[draw=black,thick] (0,0) -- (25,0);

\draw[draw=black,ultra thick] (1,0) arc (180:0:11.5);
\draw[draw=black,ultra thick] (2,0) arc (180:0:3.75);
\draw[draw=black,ultra thick] (10.5,0) arc (180:90:1.5) -- (15.5,1.5) arc (90:0:1.5);

\draw[draw=black,dotted] (4,0) -- (4,14);
\draw[draw=black,dotted] (7.5,0) -- (7.5,14);
\draw[draw=black,dotted] (13,0) -- (13,14);
\draw[draw=black,dotted] (18.5,0) -- (18.5,14);
\draw[draw=black,dotted] (21.5,0) -- (21.5,14);

\draw[fill=white,draw=black,thick] (4,0) circle (0.75);
\node[anchor=south] at (4,14) {\small$p_y$};
\draw[fill=white,draw=black,thick] (7.5,0) circle (0.75);
\node[anchor=south] at (7.5,14) {\small$p_k$};
\draw[fill=white,draw=black,thick] (13,0) circle (0.75);
\node[anchor=south] at (13,14) {\small$p_z$};
\draw[fill=white,draw=black,thick] (18.5,0) circle (0.75);
\node[anchor=south] at (18.5,14) {\small$p_\ell$};
\draw[fill=white,draw=black,thick,densely dotted] (21.5,0) circle (0.75);
\node[anchor=south] at (21.5,14) {\small$\op_\ell$};

\draw[->,>=stealth',draw=black,ultra thick] (26,0) -- (29,0);

\draw[draw=black,thick] (30,0) -- (55,0);

\draw[draw=black,ultra thick] (31,0) arc (180:0:11.5);
\draw[draw=black,ultra thick] (32,0) arc (180:0:10.5) arc (360:180:1.5) arc (0:180:5.25);
\draw[draw=black,ultra thick] (40.5,0) arc (180:90:1.5) -- (45.5,1.5) arc (90:0:1.5);

\draw[draw=black,dotted] (34,0) -- (34,14);
\draw[draw=black,dotted] (37.5,0) -- (37.5,14);
\draw[draw=black,dotted] (43,0) -- (43,14);
\draw[draw=black,dotted] (48.5,0) -- (48.5,14);
\draw[draw=black,dotted] (51.5,0) -- (51.5,14);

\draw[fill=white,draw=black,thick] (34,0) circle (0.75);
\node[anchor=south] at (34,14) {\small$\op_y$};
\draw[fill=white,draw=black,thick,densely dotted] (37.5,0) circle (0.75);
\node[anchor=south] at (37.5,14) {\small$p_k$};
\draw[fill=white,draw=black,thick] (43,0) circle (0.75);
\node[anchor=south] at (43,14) {\small$\op_{z-1}$};
\draw[fill=white,draw=black,thick] (48.5,0) circle (0.75);
\node[anchor=south] at (48.5,14) {\small$\op_{\ell-1}$};
\draw[fill=white,draw=black,thick] (51.5,0) circle (0.75);
\node[anchor=south] at (51.5,14) {\small$\op_\ell$};

\node[anchor=south] at (12,11.5) {$A_1^4$};
\node[anchor=south] at (5.75,3.75) {$A_{\omega_1}^1$};
\node[anchor=south] at (14,1.25) {$\upA_-(p_\ell,\calL)$};
\node[anchor=south] at (42,11.5) {$A_1^4$};
\node[anchor=south] at (44,1.25) {$\upA_-(p_\ell,\calL)$};
\node[anchor=south] at (44.75,5.25) {$A_{\omega_1}^{1,1}$};
\node[anchor=north] at (51.5,-1.5) {$A_{\omega_1}^{1,2}$};
\node[anchor=north] at (42.5,10.5) {$A_{\omega_1}^{1,3}$};

\node[anchor=north] at (12.5,-2.5) {Lamination $\calL$};
\node[anchor=north] at (42.5,-2.5) {Lamination $\oL$};
\end{tikzpicture}
\end{center}
\caption{Case \#2: $k \in \pi_\beta(\ell,+,\uparrow)$ and $k \notin \pi_\beta(\ell,-,\uparrow)$ --- assuming $y < k$}
\label{fig:compute-hard-case-thm-4.8-IIb}
\end{subfigure}
\end{center}
\caption{Computing $\pi_{\beta\lambda}^2(\ell-1,\pm,\updownarrow)$ when $k \in \pi_\beta(\ell,+,\uparrow)$}
\label{fig:compute-hard-case-thm-4.8-II}
\end{figure}

Adding these three cases, we obtain
\vspace{-7mm}

\begin{eqnarray*}
\pi^2_{\beta\lambda}(\ell-1,+,\uparrow) & = & \Theta^\uparrow(\pi^2_\beta(\ell,+,\uparrow)) \text{ if } k \notin \pi_\beta(\ell,+,\uparrow) \\
& & (\{\ell-1\},\emptyset) \text{ if } k \in \pi_\beta(\ell,-,\uparrow) \\
& & (\{z-1,\ldots,\ell\},\emptyset) \text{ if } k \in \pi_\beta(\ell,+,\uparrow) \text{ and } k \notin \pi_\beta(\ell,-,\uparrow); \\
\pi^2_{\beta\lambda}(\ell-1,-,\uparrow) & = & \Theta^\uparrow(\pi^2_\beta(\ell,-,\uparrow)) \text{ if } k \notin \pi_\beta(\ell,-,\uparrow) \\
& & (\{\ell-1\},\emptyset) \text{ if } k \in \pi_\beta(\ell,-,\uparrow); \\
\pi^2_{\beta\lambda}(\ell-1,+,\downarrow) & = & \Theta^\downarrow(\pi^2_\beta(\ell,+,\downarrow)) \text{ if } k \notin \pi_\beta(\ell,+,\uparrow) \\
& & (\{\ell\},\{z,\ldots,\ell\}) \text{ if } k \in \pi_\beta(\ell,-,\uparrow) \\
& & (\{\ell\},\{y,\ldots,\ell\}) \text{ if } k \in \pi_\beta(\ell,+,\uparrow) \text{ and } k \notin \pi_\beta(\ell,-,\uparrow); \\
\pi^2_{\beta\lambda}(\ell-1,-,\downarrow) & = & \Theta^\downarrow(\pi^2_\beta(\ell,-,\downarrow)).
\end{eqnarray*}

\item Let $i$ be an integer such that $i \notin \{k,\ell,\ell+1\}$. The neighbor arcs of $p_i$ in $\calL$ either are some arc $A_j^1 \in \Omega_1$
(which will be replaced by $A_j^{1,1}$ if $i > k$, or $A_j^{1,3}$ if $i < k$, when transforming $\calL$ into $\oL$) or 
are also neighbor arcs of $p_i$ in $\oL$. It follows that
\vspace{-7mm}

\begin{eqnarray*}
\pi^2_{\beta\lambda}(\ol{\psi}(i),+,\uparrow) & = & (\{i+1,\ldots,\ell\},\{\ell\})\text{ if } i < k,~ k \in \pi_\beta(i,+,\uparrow) \text{ and } \\
& & {\color{white}(\{i+1,\ldots,\ell\},\{\ell\})\text{ if } }\ell \notin \pi_\beta(i,+,\uparrow) \\
& & (\{i,\ldots,\ell-1\},\emptyset) \text{ if } k < i < \ell \text{ and } k \in \pi_\beta(i,+,\uparrow) \\
& & \Theta^\uparrow(\pi_\beta(i,+,\uparrow))\text{ otherwise}; \\
\pi^2_{\beta\lambda}(\ol{\psi}(i),-,\uparrow) & = & (\{i,\ldots,\ell\},\{\ell\})\text{ if } i < k,~ k \in \pi_\beta(i,-,\uparrow)
\text{ and } \ell \notin \pi_\beta(i,-,\uparrow) \\
& & (\{i-1,\ldots,\ell-1\},\emptyset)\text{ if } k < i < \ell \text{ and } k \in \pi_\beta(i,-,\uparrow) \\
& & \Theta^\uparrow(\pi_\beta(i,-,\uparrow)) \text{ otherwise}; \\
\pi^2_{\beta\lambda}(\ol{\psi}(i),+,\downarrow) & = & \Theta^\downarrow(\pi^2_\beta(i,+,\downarrow)); \\
\pi^2_{\beta\lambda}(\ol{\psi}(i),-,\downarrow) & = & \Theta^\downarrow(\pi^2_\beta(i,-,\downarrow)).
\end{eqnarray*}
\end{enumerate}

This disjunction of cases provides us with a complete characterization of $\pi^2_{\beta \lambda}$
as a function depending only on $\pi_\beta$ and of $\lambda$,
which completes the proof of Proposition~\ref{pro:automata-state}.
\end{proof}

\section{The Relaxation Normal Form is Automatic When $n \leq 3$}
\label{section:automatic}

We have proved that the relaxation normal form is regular.
A natural question is now that of the automaticity of this normal form.
We prove here that the relaxation normal form is synchronously biautomatic if and only if $n \leq 3$
and is not asynchronously right automatic for $n \geq 4$.

The notions of synchronous and asynchronous normal forms
can be characterized using results from~\cite{Epstein:1992:WPG:573874},
in which the results mentioned below without proof can be found.

\begin{dfn}{Difference set}
Let $G$ be a finitely presented group, and let $\textbf{NF}$ be a normal form on $G$.
Let $g$ be an element of $G$, let $K$ be a positive integer, and let $\calS$ be a subset of $G$.

For all elements $x, y \in G$, consider the words $\bx = \textbf{NF}(x)$ and $\by = \textbf{NF}(y)$,
with respective lengths $|\bx|$ and $|\by|$.
For all integers $k \leq |\bx|$ and $\ell \leq |\by|$,
let $\bx_k$ be the prefix of $\bx$ of length $k$, and let $\by_\ell$ be the prefix of $\by$ of length $\ell$,
then let $x_k$ and $y_\ell$ be elements of $G$ such that $\bx_k = \textbf{NF}(x_k)$ and $\by_\ell = \textbf{NF}(y_\ell)$.

We say that $\calS$ is an \emph{asynchronous difference set} for $\textbf{NF}$, $g$, $x$ and $y$ if
there exists non-decreasing functions $\bX : \{0,\ldots,|\bx|+|\by|\} \mapsto \{0,\ldots,|\bx|\}$ and 
$\bY : \{0,\ldots,|\bx|+|\by|\} \mapsto \{0,\ldots,|\by|\}$ such that
$\bX(k)+\bY(k) = k$ and $g x_{\bX(k)} \in y_{\bY(k)} \calS$ for all $k \leq |\bx|+|\by|$.
We further say that $\calS$ is a \emph{synchronous difference set} for $\textbf{NF}$, $g$, $x$ and $y$ if
there exists such functions such that $\bX(2k) = \bY(2k) = k$ for all $k \leq \min\{|\bx|,|\by|\}$.
\label{dfn:universal-diff-set}
\end{dfn}

\begin{thm}
Let $G$ be a finitely presented group, with neutral element $\varepsilon$.
A normal form $\textbf{NF}$ on $G$ is synchronously (respectively, asynchronously) left automatic if and only if,
for all elements $g \in G$, there exists a finite set $\calS \subseteq G$ such that,
for all $x \in G$, the set $\calS$ is a synchronous (respectively, asynchronous) difference set for $\textbf{NF}$, $g$, $x$ and $g x$.

Similarly, $\textbf{NF}$ is synchronously (respectively, asynchronously) right automatic if and only if,
for all elements $g \in G$, there exists a finite set $\calS \subseteq G$ such that,
for all $x \in G$, the set $\calS$ is a synchronous (respectively, asynchronous) difference set for $\textbf{NF}$, $\varepsilon$, $x$ and $x g$.
\label{thm:asynchronously-automatic}
\end{thm}

In the sequel, when $n \geq 4$, we first prove that the relaxation normal form is not synchronously left automatic,
then we prove that it is not asynchronously right automatic (hence, not synchronously right automatic) as well.

\begin{lem}
For all integers $k \geq 1$, the words
$\ba_k = [2 \curvearrowright 4] \cdot ([1 \curvearrowright 3] \cdot [1 \curvearrowbotright 4] \cdot [3 \curvearrowright 4])^k$ and
$\bb_k = ([2 \curvearrowright 4] \cdot [2 \curvearrowbotright 4])^k \cdot [1 \curvearrowright 4]$
are in relaxation normal form.
Furthermore, the equality $\sigma_1 \alpha_k = \beta_k$ holds, where
$\ba_k = \RNF(\alpha_k)$ and $\bb_k = \RNF(\beta_k)$.
\label{lem:not-sync-automatic}
\end{lem}

\begin{proof}
We first consider the braids $\ol{\beta}_k = ([2 \curvearrowright 4] [2 \curvearrowbotright 4])^k$
and their extended shadows.
Straightforward computations show that $\pi^2_{\alpha_2} = \pi^2_{\alpha_3}$ and $\pi^2_{\ol{\beta}_2} = \pi^2_{\ol{\beta}_3}$.
Hence, and due to Proposition~\ref{pro:automata-state},
an immediate induction on $k$ proves that
$\pi^2_{\alpha_k} = \pi^2_{\alpha_2}$ and $\pi^2_{\ol{\beta}_k} = \pi^2_{\ol{\beta}_2}$ for all $k \geq 2$.
One checks easily that $\ba_2$ and $\bb_2$ are in relaxation normal form, and it follows that
$\ba_k$ and $\bb_k$ are in relaxation normal form for all integers $k \geq 2$.

Observe that $\sigma_1 \alpha_k = \sigma_1 \sigma_2 \sigma_3 (\sigma_2^{-1} \sigma_1)^k$ and that $\beta_k = (\sigma_3^{-1} \sigma_2)^k  \sigma_1 \sigma_2 \sigma_3$.
Hence, the relation $\sigma_1 \alpha_k = \beta_k$ follows from the equality $\sigma_1 \sigma_2 \sigma_3 \sigma_2^{-1} \sigma_1 = \sigma_3^{-1} \sigma_2 \sigma_1 \sigma_2 \sigma_3$,
which is easy to check and therefore we have $\sigma_1 \alpha_k = \beta_k$ for all $k \geq 0$.
\end{proof}

\begin{cor}
In each braid group $B_n$ with $n \geq 4$ strands,
the relaxation normal form is not synchronously left automatic.
\label{cor:not-sync-automatic}
\end{cor}

\begin{proof}
Consider some integer $u \geq 1$, as well as the braids $x = \alpha_{3u}$ and $y = \beta_{3u} = \sigma_1 x$.
Using the notations of Definition~\ref{dfn:universal-diff-set}, we have $|\by| = 6u+1 \leq |\bx|$ and we also have
$x_{6u+1} = x (\sigma_2^{-1} \sigma_1)^{-u}$ and $y_{6u+1} = y$.
It follows that $y_{6u+1}^{-1} \sigma_1 x_{6u+1} = (\sigma_1 x)^{-1} \sigma_1 x (\sigma_2^{-1} \sigma_1)^{-u} = (\sigma_2^{-1} \sigma_1)^{-u}$.
No finite set can contain all powers of the braid $\sigma_2^{-1} \sigma_1$.
Hence, no finite set is a synchronous difference set for $\textbf{NF}$, $\sigma_1$, $x$ and $\sigma_1 x$ for all braids $x$, and
Theorem~\ref{thm:asynchronously-automatic} proves that the relaxation normal form is not synchronously left automatic.
\end{proof}

\begin{lem}
For all integers $k \geq 3$, the words
$\bc_k = [1 \curvearrowright 2]^k \cdot [3 \curvearrowright 4]^k$ and
$\bd_k = [3 \curvearrowright 4]^{k+1} \cdot [1 \curvearrowright 4]^2 \cdot [3 \curvearrowright 4]^{k-1}$
are in relaxation normal form.
Furthermore, the relation $\gamma_k \Delta_3 = \delta_k$ holds, where
$\bc_k = \RNF(\gamma_k)$, $\bd_k = \RNF(\delta_k)$ and $\Delta_3$ is the Garside element $\sigma_1 \sigma_2 \sigma_3 \sigma_1 \sigma_2 \sigma_1$.
\label{lem:not-automatic}
\end{lem}

\begin{proof}
We first consider the braids
$\ol{\gamma}_{k,\ell} = [1 \curvearrowright 2]^k [3 \curvearrowright 4]^\ell$ and
$\ol{\delta}_{k,\ell} = [3 \curvearrowright 4]^k [1 \curvearrowright 4]^2 [3 \curvearrowright 4]^\ell$ and their extended shadows.
Straightforward computations show that
$\pi^2_{[1 \curvearrowright 2]^3} = \pi^2_{[1 \curvearrowright 2]^2}$, $\pi^2_{\ol{\gamma}_{2,3}} = \pi^2_{\ol{\gamma}_{2,2}}$,
$\pi^2_{[3 \curvearrowright 4]^3} = \pi^2_{[3 \curvearrowright 4]^2}$ and $\pi^2_{\ol{\delta}_{2,3}} = \pi^2_{\ol{\delta}_{2,2}}$.
Hence, and due to Proposition~\ref{pro:automata-state}, immediate inductions on $k$ and $\ell$ prove that
$\pi^2_{[1 \curvearrowright 2]^k} = \pi^2_{[1 \curvearrowright 2]^2}$, $\pi^2_{\ol{\gamma}_{k,\ell}} = \pi^2_{\ol{\gamma}_{2,2}}$,
$\pi^2_{[3 \curvearrowright 4]^k} = \pi^2_{[3 \curvearrowright 4]^2}$ and $\pi^2_{\ol{\delta}_{k,\ell}} = \pi^2_{\ol{\delta}_{2,2}}$
for all $k,\ell \geq 2$.
One checks easily that $\bc_3$ and $\bd_3$ are in relaxation normal form, and it follows that
$\bc_k$ and $\bd_k$ are in relaxation normal form for all integers $k \geq 3$.

Furthermore, observe that $\sigma_1 \sigma_3 = \sigma_3 \sigma_1$, $\Delta_3 \sigma_1 = \sigma_3 \Delta_3$ and $\Delta_3 \sigma_3 = \sigma_1$.
Hence, the braid $(\sigma_1 \sigma_2 \sigma_3)^2 = \Delta_3 \sigma_1^{-1} \sigma_3$ is also equal to $\sigma_3^{-1} \Delta_3 \sigma_3$.
It follows that
\[\gamma_k \Delta_3 = \delta_k \Leftrightarrow \sigma_1^k \sigma_3^k \Delta_3 = \sigma_3^{k+1} (\sigma_1 \sigma_2 \sigma_3)^2 \sigma_3^{k-1} \Leftrightarrow
\sigma_3^k \Delta_3 \sigma_3^k = \sigma_3^{k+1} (\sigma_3^{-1} \Delta_3 \sigma_3) \sigma_3^{k-1}.\]
The last equality is obvious, which completes the proof.
\end{proof}

\begin{cor}
In each braid group $B_n$ with $n \geq 4$ strands,
the relaxation normal form is not asynchronously right automatic.
\label{cor:not-automatic}
\end{cor}

\begin{proof}
Consider some integer $k \geq 3$, as well as the braids $x = \gamma_k$ and $y = \delta_k = x \Delta_3$.
Using the notations of Definition~\ref{dfn:universal-diff-set}, we have $|\bx| = 2k \leq |\by|$ and we also have
$x_\ell = \sigma_1^\ell$ and $y_\ell = \sigma_3^\ell$ for all $\ell \leq k$.
Hence, if $\bX(k) + \bY(k) = k$, then
$y_{\bY(k)}^{-1} x_{\bX(k)}$ is a braid of Artin length $k$.
No finite set can contain Artin braids of arbitrarily Artin length.
Therefore, no finite set is the asynchronous difference set for $\textbf{NF}$, $\varepsilon$, $x$ and $x \Delta$ for all braids $x$,
and Theorem~\ref{thm:asynchronously-automatic} proves that the relaxation normal form is not asynchronously right automatic.
\end{proof}

If checking that a normal form is \emph{not} synchronously or asynchronously automatic can be done by
selecting infinite families of counter-examples, such as in Lemmas~\ref{lem:not-sync-automatic} and~\ref{lem:not-automatic},
proving that a normal form is automatic is computationally less easy or,
at least, witnesses that a normal form is automatic may be large.
However, there exists systematic approaches~\cite{Epstein:1992:WPG:573874}
for building such witnesses, if they exist.

\begin{dfn}{Left asynchronous and synchronous languages}
Let $G$ be a finitely generated group,
and let $\mathbf{NF}$ be a regular normal form on $G$.
Let $\calA = (\Sigma,Q,i,\delta,F)$ be an automaton that recognizes the language $\{\mathbf{NF}(x) : x \in G\}$,
where $\Sigma$ generates positively the group $G$.
In addition, let $\Sigma_{\varepsilon}$ denote the set $\Sigma \cup \{\varepsilon\}$,
where $\varepsilon$ is the neutral element of the group $G$,
and consider the new transition function $\ol{\delta} = \delta \cup \{(q,\varepsilon,q) : q \in Q\}$.

Now, let $T$ be a subset of $G$, and let $g$ be an element of $\mathcal{S}$.
Consider the finite automaton $\calA(g,T) = (\Sigma_{\varepsilon} \times \Sigma_{\varepsilon},Q \times Q \times T,i_a,\delta_a,F_a)$ with
initial state $i_a^g = (i,i,g)$, set of accepting states $F_a = F \times F \times \{\varepsilon\}$ and transition function
\[\delta_a = \{((q,r,x),(\lambda,\mu),(q',r',x')) : x' = \mu^{-1} x \lambda \text{, } q' \in \ol{\delta}(q,\lambda) \text{ and } r' \in \ol{\delta}(r,\mu)\}.\]
We call \emph{left asynchronous automaton} the automaton $\calA(g,T)$,
and \emph{left asynchronous language} the associated language, which we denote by $\mathcal{L}_a(g,T)$.

We also call \emph{left synchronous language} the language $\mathcal{L}_s(g,T)$ that consists of those words
$(\lambda_1,\mu_1) \cdot \ldots \cdot (\lambda_k,\mu_k)$ that belong to the left asynchronous language $\mathcal{L}_a(g,T)$
and such that $\lambda_i = \varepsilon \Rightarrow \lambda_{i+1} = \varepsilon$ and $\mu_i = \varepsilon \Rightarrow \mu_{i+1} = \varepsilon$
for all $i \leq k+1$.
\label{dfn:left-language}
\end{dfn}

Such languages provide us a characterisation of left automatic normal forms,
thanks to the notion of left $\varepsilon$-reduction.

\begin{dfn}{Left $\varepsilon$-reduction}
Let $\Sigma$ be a finite alphabet, and let $\varepsilon$ be an element of $\Sigma$.
We define the \emph{left $\varepsilon$-reduction} of a word $(w_1,x_1) \cdot \ldots \cdot (w_k,x_k)$ with letters in $\Sigma \times \Sigma$
as the word, with letters in $\Sigma$, that we obtain by deleting the letters $\varepsilon$ from the word $w_1 \cdot \ldots \cdot w_k$.
\end{dfn}

Observe that, if $L \subseteq (\Sigma \times \Sigma)^\ast$ is recognized by some finite automaton,
then we can compute an automaton that recognizes the set of left $\varepsilon$-reduction of all words in $L$.
Hence, a consequence of Theorem~\ref{thm:asynchronously-automatic} is the following one.

\begin{pro}
Let $G$ be a finitely generated group, let $\mathbf{NF}$ be a regular normal form on $G$,
that maps each group element to a word with letters in $\Sigma$, where $\Sigma$ is a generating subset of $G$.
For all sets $T \subseteq G$ and all elements $g \in T$,
let $\calL_a(g,T)$ and $\calL_s(g,T)$ be the left asynchronous and synchronous languages defined above.

The normal form $\mathbf{NF}$ is asynchronously left automatic if and only if there exists a finite superset $T$ of $\Sigma$ such that,
for all $g \in \Sigma$, the language $\{\mathbf{NF}(x) : x \in G\}$ is the set of left $\varepsilon$-reduction of all words in $\calL_a(g,T)$.
Similarly, $\mathbf{NF}$ is synchronously left automatic if and only if there exists a finite superset $T$ of $\Sigma$ such that,
for all $g \in \Sigma$, the language $\{\mathbf{NF}(x) : x \in G\}$ is the set of left $\varepsilon$-reduction of all words in $\calL_s(g,T)$.
\label{pro:async-left-automatic-constructive}
\end{pro}

Variants of the languages $\calL_a(g,T)$ and $\calL_s(g,T)$ also exist for characterising the right automaticity.
The only change that must be performed in such variants is to replace the initial state $i_a^g$ be the new initial state
$i_s = (i,i,\varepsilon)$ and the set of accepting states by $F_s^g = F \times F \times \{g^{-1}\}$.
A result analogous to Proposition~\ref{pro:async-left-automatic-constructive} then holds.

Although such characterisations of automatic normal form are \emph{not} suitable for proving that a normal form
is not automatic, they provide effective ways to prove that a normal form is automatic.
Using these ideas, we prove the following result.

\begin{pro}
The relaxation normal form is synchronously automatic when $n = 3$.
Furthermore, the relaxation normal form is also asynchronously left automatic when $n = 4$.
\label{pro:i-a-automatic}
\end{pro}

\begin{proof}
Using Proposition~\ref{pro:async-left-automatic-constructive} and
enumerating sets $T_0 = \Sigma \subseteq T_1 \subseteq T_2 \subseteq \ldots$ such that
$\bigcup_{k \geq 0} T_i = B_n$, we would eventually find witnesses of Proposition~\ref{pro:i-a-automatic}.

However, finding a suitable set $T$ directly is computationally intensive.
A convenient trick lies in considering a variant of the relaxation normal form
which we obtain by replacing every letter $[i \curvearrowright j]$ by the word $\sigma_i \cdot \sigma_{i+1} \cdot \ldots \cdot \sigma_{j-1}$,
and every letter $[i \curvearrowbotright j]$ by the word $\sigma_i^{-1} \cdot \sigma_{i+1}^{-1} \cdot \ldots \cdot \sigma_{j-1}^{-1}$.
Hence, $\Sigma$ is the set of Artin generators of $B_n$.
Then, adequate sets $T$ are of cardinality less than $200$ for $n = 3$, and less than $800$ for $n = 4$ (in the case of the aysnchronous left automaticity).

It follows directly that the relaxation normal form is asynchronous left automatic for $n = 4$,
but not yet that it is synchronously biautomatic for $n = 3$.
Indeed, consider two braids $\alpha$ and $\beta$, with $\beta = g \alpha$ or $\beta = \alpha g$ for some $g \in \{\sigma_1,\sigma_2\}$.
The braids $\alpha$ and $\beta$ are represented by words $\ba$ and $\bb$ in relaxation normal form,
and by words $\ol{\ba}$ and $\ol{\bb}$ in the variant introduced above.

Theorem~\ref{thm:asynchronously-automatic} states that the braids $\ol{b}_\ell^{-1} g \ol{a}_\ell$, obtained using the variant, belong to some finite set.
However, proving that the relaxation normal form is synchronously biautomatic
requires applying Theorem~\ref{thm:asynchronously-automatic} on braids $b_k^{-1} g a_k$ obtained using the original relaxation normal form.
Unfortunately, the braids $b_k^{-1} g a_k$ are not of the form $\ol{b}_\ell^{-1} g \ol{a}_\ell$ in general,
but only of the form $\ol{b}_\ell^{-1} g \ol{a}_m$ for some integers $m$ and $\ell$ that might differ from each other.

We overcome this problem as follows. Having computed the (left and right) asychronous automata $\calA(g,T)$ introduced in Definition~\ref{dfn:left-language},
as well as their synchronous variants, we are able to prove that, in the specific case of the relaxation normal form (for $n = 3$ and
$g \in \{\sigma_1,\sigma_2\}$), we always have $|m-\ell| \leq 8$, independently of the braids $\alpha$ and $\beta = g \alpha$ or $\beta = \alpha g$
that we consider. It is then easy to conclude that the original relaxation normal form is synchronously biautomatic too. 
\end{proof}

We gather all of the above results in Theorem~\ref{thm:who-is-automatic} and in Figure~\ref{fig:automatic-or-not}.

\begin{thm}
For $n = 2$ and $n = 3$, the relaxation normal form is synchronously biautomatic.
For $n = 4$, the relaxation normal form is asynchronously (but not synchronously) left automatic, and is not aysnchronously right automatic.
For $n \geq 5$, the relaxation normal form is not synchronously left automatic, and is not asynchronously right automatic;
it is yet unknown whether it is asynchronously left automatic.
\label{thm:who-is-automatic}
\end{thm}

\begin{figure}[!ht]
\begin{center}
{\def\arraystretch{1}
\begin{tabular}{|c|C{7mm}|C{7mm}|C{7mm}|C{7mm}|c|}
\cline{2-5}
\multicolumn{1}{c|}{} & \multicolumn{4}{|c|}{$n$} & \multicolumn{1}{|c}{} \\
\cline{2-5}
\multicolumn{1}{c|}{} & $2$ & $3$ & $4$ & $\geq 5$ & \multicolumn{1}{|c}{} \\
\hline
\multirow{2}{*}{Asynchronously} & $\checkmark$ & $\checkmark$ & $\checkmark$ & ? & Left \\
\cline{2-6}
& $\checkmark$ & $\checkmark$ & \ding{55} & \ding{55} & Right \\
\hline
\multirow{2}{*}{Synchronously} & $\checkmark$ & $\checkmark$ & \ding{55} & \ding{55} & Left \\
\cline{2-6}
& $\checkmark$ & $\checkmark$ & \ding{55} & \ding{55} & Right \\
\hline
\end{tabular}
}
\end{center}
\caption{Is the relaxation normal form automatic?}
\label{fig:automatic-or-not}
\end{figure}

\section{Relaxation Normal Form and Braid Positivity}
\label{section:positivity}

One of the main features of the braid group is that it is
\emph{left-orderable}, meaning that
there exists a total order $\lhd$ on $B_n$ such that,
if $\alpha$, $\beta$ and $\gamma$ are braids such that $\alpha \lhd \beta$, then
$\gamma \alpha \lhd \gamma \beta$.
This property allows us to characterize the order $\lhd$
just by knowing its positive elements, i.e. the set
$\{\alpha \in B_n: \varepsilon \lhd \alpha\}$.

One such left-order is called the \emph{$\sigma$-order}.
This order has been thoroughly studied~\cite{dehornoy_order_1, Dehornoy1994, dehornoy2008ordering},
and its set of positive elements can be represented simply in terms of
\emph{$\sigma$-positive} braids.

\begin{dfn}{$\sigma_i$-positivity and $\sigma$-positivity}
Let $\beta \in B_n$ be a braid on $n$ strands, and
$\sigma_i \in B_n$ be an Artin generator, where $i \leq n$.
We say that $\beta$ is \emph{$\sigma_i$-neutral} if it belongs to the subgroup of $B_n$
generated by the set $\{\sigma_j : i+1 \leq j \leq n\}$.

We also say that $\beta$ is \emph{$\sigma_i$-positive}
(respectively, \emph{$\sigma_i$-negative})
if it can be expressed as a product
$\beta = \gamma_0 \sigma_i^\epsilon \gamma_1 \sigma_i^\epsilon \ldots \sigma_i^\epsilon \gamma_k$ such that
$k \geq 1$, each braid $\gamma_j$ is $\sigma_i$-neutral, and $\epsilon = 1$ (respectively, $\epsilon = -1$).

Finally, we say that $\beta$ is \emph{$\sigma$-positive}
(respectively, \emph{$\sigma$-negative})
if it is $\sigma_i$-positive (respectively, $\sigma_i$-negative)
for some $i \leq n$.
\end{dfn}

These notions of $\sigma$-positivity and $\sigma$-negativity come with a wealth of properties,
including the fact that every non-trivial braid is either $\sigma$-positive or $\sigma$-negative, but not both
(a proof of this result can be found in~\cite{dehornoy2008ordering}).
It immediately follows that the $\sigma$-order $\lhd $, defined as
$\alpha \lhd \beta$ if and only if $\alpha^{-1} \beta$ is $\sigma$-positive,
has the property of being a total left-order.

Moreover, $\sigma$-positivity and $\sigma$-negativity are directly expressible in terms of tight laminations.

\begin{figure}[!ht]
\begin{center}
\begin{tikzpicture}[scale=0.22]
\draw[draw=gray] (4,0) -- (27,0);

\draw[fill=gray,draw=gray,thick] (9,0) circle (1);

\draw[draw=gray,ultra thick] (11,0) arc (0:360:2);
\draw[draw=gray,ultra thick] (25,0) arc (0:180:9.5);
\draw[draw=gray,ultra thick] (26,0) arc (0:180:10.5);
\draw[draw=gray,ultra thick] (27,0) arc (0:360:11.5);
\draw[draw=gray,ultra thick] (12,0) arc (180:0:4.5);

\draw[draw=gray,ultra thick] (21,0) arc (180:360:2);
\draw[draw=gray,ultra thick] (16,0) arc (360:180:5.5);
\draw[draw=gray,ultra thick] (12,0) arc (360:180:3);

\draw[fill=white,draw=gray,thick] (14,0) circle (1);
\draw[fill=white,draw=gray,thick] (18,0) circle (1);
\draw[fill=white,draw=gray,thick] (23,0) circle (1);

\draw[draw=black,thick] (32,4) -- (18,4) -- (18,2);
\draw[draw=black,ultra thick] (16,0) arc (180:0:2);
\draw[draw=black,thick] (32,-4) -- (23,-4) -- (23,-3);
\draw[draw=black,ultra thick] (20,0) arc (180:360:3);

\draw[draw=black,thick] (3,2.5) -- (14,2.5) -- (14,0);
\draw[draw=black,thick] (3,6) -- (16,6) -- (16,0);
\draw[draw=black,thick] (3,-2.5) -- (20,-2.5) -- (20,0);

\dvertex{14}
\dvertex{16}
\dvertex{20}
\node[anchor=west] at (-1,2.5) {$p$};
\node[anchor=west] at (-1,6) {$p_\calL^+$};
\node[anchor=west] at (-1,-2.5) {$p_\calL^{++}$};
\node[anchor=west] at (32,4) {$\upA_{++}(p)$};
\node[anchor=west] at (32,-4) {$\lowA_{++}(p)$};

\end{tikzpicture}
\end{center}
\caption{A puncture and its second right neighbor and arcs}
\label{fig:arc-trees-second-neighbor}
\end{figure}
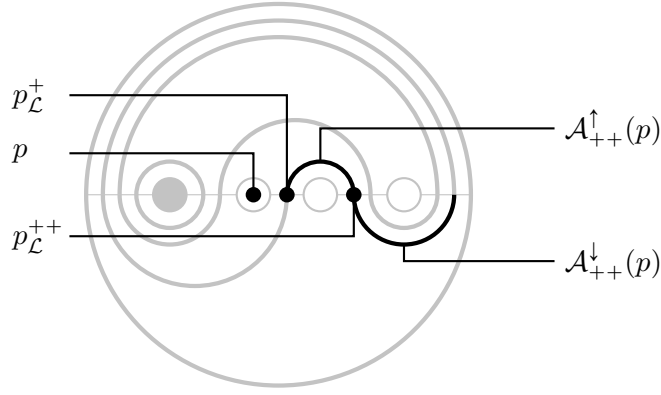

\begin{dfn}{Second right arcs}
Let $\calL$ be a tight lamination and let $p$ be some puncture of $\calL$, except the rightmost one.
Since the set $\calL_\RR = \calL \cap \RR$ intersects both intervals $(p,p_n)$ and $(p_n,\infty)$, the 
point $p_\calL^{++} = \left(p_\calL^+\right)_\calL^+ = \min \{z \in \calL_\RR: z > p_\calL^+\}$ is well-defined.
We call this point the \emph{second right neighbor point} of $p$ in $\calL$.

The point $p_\calL^{++}$ belongs to two arcs of $\calL$.
We call these arcs the \emph{second right upper arc} and the \emph{second right lower arc} of $p$ in $\calL$,
and denote them respectively by $\upA_{++}(p,\calL)$ and by $\lowA_{++}(p,\calL)$.
\label{dfn:second-right-arcs}
\end{dfn}

Figure~\ref{fig:arc-trees-second-neighbor} shows some tight lamination,
in which a puncture $p$, the right neighbor point and the the second right neighbor of $p$, and the second right arcs of $p$ have been highlighted.
Second right arcs provide us with a geometrical characterization of $\sigma_1$-positive and $\sigma_1$-negative braids
(see~\cite{dehornoy2008ordering} for details), which we reformulate here.

\begin{pro}
Let $\calL$ be the tight lamination of a braid $\beta \in B_n$ and let $i$ be some integer such that $1 \leq i \leq n-1$.
The braid $\beta$ is $\sigma_i$-neutral if and only if $0 \in \pi_\beta(j,+,\uparrow) \cap \pi_\beta(j,+,\downarrow)$ for all $j \leq i$.
In addition, $\beta$ is $\sigma_i$-positive if and only if $\beta$ is $\sigma_{i-1}$-neutral and
$0 \notin \pi_\calL(\upA_{++}(p_{i-1},\calL))$.
Similarly, $\beta$ is $\sigma_i$-negative if and only if $\beta$ is $\sigma_{i-1}$-neutral and
$0 \notin \pi_\calL(\lowA_{++}(p_{i-1},\calL))$.
\label{pro:sigma1-positive-words}
\end{pro}

From Proposition~\ref{pro:sigma1-positive-words} follows a characterization of the $\sigma_i$-positive and $\sigma_i$-negative braids
according to their relaxation normal forms.
Indeed, for each integer $j \in \{1,\ldots,n\}$, let $\calS^\uparrow_j$, $\calS^\downarrow_j$ and $\Sigma_j$ be respectively the subsets
$\{[j \curvearrowright v] : j < v\}$, $\{[j \curvearrowbotright v] : j < v\}$ and
$\bigcup_{k \geq j} (\calS^\uparrow_k \cup \calS^\downarrow_k)$
of the set $\Sigma$ of all right-oriented sliding braids.

\begin{thm}
Let $\beta \in B_n$ be a braid. The braid $\beta$ is $\sigma_i$-positive (respectively, $\sigma_i$-negative)
if and only if $\RNF(\beta) \in \Sigma_{i+1}^\ast \cdot \calS^\uparrow_i \cdot \Sigma_i^\ast$
(respectively, $\RNF(\beta) \in \Sigma_{i+1}^\ast \cdot \calS^\downarrow_i \cdot \Sigma_i^\ast$).
\label{thm:right-normal-form-positivity}
\end{thm}

\begin{proof}
The sets $\{\varepsilon\}$, $\Sigma_{i+1}^\ast \cdot \calS^\uparrow_i \cdot \Sigma_i^\ast$
and $\Sigma_{i+1}^\ast \cdot \calS^\downarrow_i \cdot \Sigma_i^\ast$ for ($1 \leq i \leq n-1$)
form a partition of the free monoid $\Sigma^\ast$.
Moreover, a braid $\beta$ is clearly $\sigma_i$-positive if $\RNF(\beta) \in \Sigma_{i+1}^\ast \cdot \calS^\uparrow_i$,
or $\sigma_i$-negative if $\RNF(\beta) \in \Sigma_{i+1}^\ast \cdot \calS^\downarrow_i$.
Hence, and without loss of generality, it suffices to prove that
if $\beta$ is a $\sigma_i$-positive braid and if $\bR(\beta [i \curvearrowbotright j]) = [i \curvearrowbotleft j]$ for some $j > i$, then
$\beta [i \curvearrowbotright j]$ is $\sigma_i$-positive.

Then, let $\calL$ and $\oL$ be tight laminations of $\beta$ and of $\beta [i \curvearrowbotright j]$,
and let $p_0,\ldots,p_n$ and $\op_0,\ldots,\op_n$ be their respective punctures.
Since $\beta$ is $\sigma_i$-neutral, so is $\beta [i \curvearrowright j]$.
Moreover, all upper arcs of $\calL$ remain lower arcs of $\oL$.
Hence, consider the arc $A = \upA_{++}(p_{i-1},\calL) = \upA_{++}(\op_{i-1},\oL)$.
Proposition~\ref{pro:sigma1-positive-words} states that $A$ does not cover the fixed puncture $p_0 = \op_0$,
hence it states that $\beta [i \curvearrowbotright j]$ is $\sigma$-positive.
\end{proof}

If follows that the sets $\{\RNF(\beta) : \beta$ is $\sigma_i$-positive$\}$ and
$\{\RNF(\beta) : \beta$ is $\sigma_i$-negative$\}$ are regular, and that
each prefix of a $\sigma_i$-positive word must be $\sigma_i$-positive or $\sigma_i$-neutral.
Hence, an immediate consequence of Theorem~\ref{thm:right-normal-form-positivity} is the following.

\begin{thm}
There exists a regular language $\bL$ such that, for each braid $\beta \in B_n$,
the braid $\beta$ is $\sigma$-positive if and only if its relaxation normal form
$\RNF(\beta)$ belongs to $\bL$.
\end{thm}

\section{Remembering Extended Shadows is Not Overkill}
\label{section:automaton-size}

Theorem~\ref{thm:here-is-the-automaton} provides us with a deterministic automaton that
accepts the relaxation normal form.
Is this automatonc minimal or close to minimal?

A first step in answering this question is to find an upper bound on the number of
possible extended shadows of all braids $\beta \in B_n$.
Such a crude upper bound is obtained as follows.
There exists $4n$ triples $(i,\diamond,\vartheta) \in \{1,\ldots,n\} \times \{+,-\} \times \{\downarrow,\uparrow\}$,
and each such triple is mapped to two (possibly equal) subintervals of $\{0,\ldots,n\}$.
Hence, there exists at most $(n+1)^{4 \times 4n}$ extended shadows.
However, we can prove that there exists $2^{\mathcal{O}(n)}$ only extended shadows,
using the notion of neighbor trees, which is illustrated in Figure~\ref{fig:neighbo-trees}.

\begin{dfn}{Neighbor trees}
Let $\calL$ be tight lamination, with punctures $p_0, \ldots, p_n$,
and let $p_k$ be the rightmost puncture that is covered by some bigon of $\calL$.
Let $\bA_n^\top$ and $\bA_n^\bot$ be the two arcs contained in the curve $\calL_n$,
i.e. the two arcs that cover all other upper arcs and punctures of $\calL$.
We say that an arc $\calA$ of $\calL$ is \emph{nice} if $\calA$ is either one neighbor arc of a puncture $p_i$, or is $\bA_n^\top$ or $\bA_n^\bot$,
or covers the puncture $p_k$ and shares its right endpoint with a neighbor arc of a puncture.

The \emph{upper neighbor tree} of the lamination $\calL$
which we denote by $\calN^\uparrow(\calL)$, and defined as follows.
The vertices of $\calN^\uparrow(\calL)$ are of the form
$v_A$, where $A$ is a nice upper arc of $\calL$,
or of the form $v_p$, where $p$ is a puncture of $\calL$.
A vertex $v_A$ is an ancestor of $v_B$ in $\calN^\uparrow(\calL)$ if and only if $A$ covers $B$.
Hence, the children of every vertex can be ordered from left to right.
If $v_{A_1},\ldots,v_{A_k}$ are the children of a vertex $v_A$ in if $v_A$ is a vertex whose children in $\calN^\uparrow(\calL)$, taken from left to right,
then we say that $v_{A_i}$ is the $i$-th child of $v_A$.

We define similarly the \emph{lower neighbor tree} of $\calL$, which we denote by $\calN^\downarrow(\calL)$.
\label{dfn:neighbor-trees}
\end{dfn}

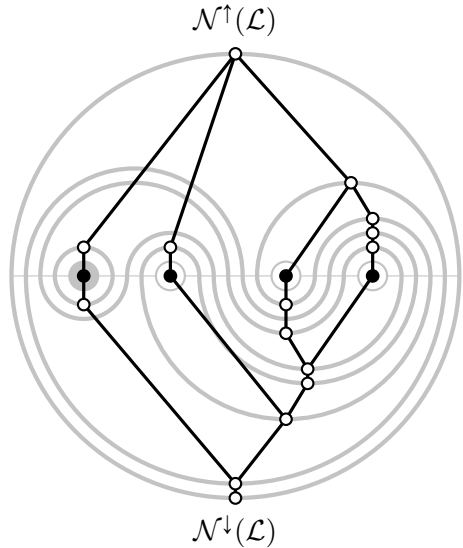
\begin{figure}[!ht]
\begin{center}
\begin{tikzpicture}[scale=0.19]
\draw[draw=gray] (36,0) -- (67,0);

\draw[fill=gray,draw=gray,thick] (41,0) circle (1);

\draw[draw=gray,ultra thick] (36,0) arc (180:-180:15.5);
\draw[draw=gray,ultra thick] (37,0) arc (180:0:7.5) arc (180:360:3) arc (180:0:3) arc (360:180:7.5) arc (0:180:2) arc (180:360:10)
arc (0:180:4) arc (360:180:2) arc (180:0:6.5) arc (360:180:14.5);
\draw[draw=gray,ultra thick] (38,0) arc (180:0:6.5) arc (180:360:4) arc (180:0:2) arc (360:180:6.5) arc (0:180:3) arc (360:180:3);
\draw[draw=gray,ultra thick] (39,0) arc (180:-180:2);

\draw[fill=white,draw=gray,thick] (47,0) circle (1);
\draw[fill=white,draw=gray,thick] (55,0) circle (1);
\draw[fill=white,draw=gray,thick] (61,0) circle (1);

\draw[black,very thick] (51.5,15.5) -- (41,2) -- (41,-2) -- (51.5,-14.5) -- (51.5,-15.5);
\draw[black,very thick] (51.5,15.5) -- (47,2) -- (47,0) -- (55,-10) -- (51.5,-14.5);
\draw[black,very thick] (51.5,15.5) -- (59.5,6.5) -- (55,0) -- (55,-4) -- (56.5,-6.5) -- (56.5,-7.5) -- (55,-10);
\draw[black,very thick] (59.5,6.5) -- (61,4) -- (61,0) -- (56.5,-6.5);

\bvertex{41}{-2}
\bvertex{41}{2}
\bvertex{47}{2}
\bvertex{51.5}{-15.5}
\bvertex{51.5}{-14.5}
\bvertex{51.5}{15.5}
\bvertex{55}{-10}
\bvertex{55}{-4}
\bvertex{55}{-2}
\bvertex{56.5}{-7.5}
\bvertex{56.5}{-6.5}
\bvertex{59.5}{6.5}
\bvertex{61}{2}
\bvertex{61}{3}
\bvertex{61}{4}

\dvertex{41}
\dvertex{47}
\dvertex{55}
\dvertex{61}

\node[anchor=south] at (51.5,16) {$\calN^\uparrow(\calL)$};
\node[anchor=north] at (51.5,-16) {$\calN^\downarrow(\calL)$};
\end{tikzpicture}
\end{center}
\caption{Neighbor trees of a tight lamination $\calL$}
\label{fig:neighbo-trees}
\end{figure}

Following Corollary~\ref{cor:all-cover},
the leaves of $\calN^\uparrow(\calL)$ and $\calN^\downarrow(\calL)$ are the punctures $p_i$.
Furthermore, a puncture $p_i$ belongs to an upper (respectively, lower) bigon if and only if it has no sibling
in $\calN^\uparrow(\calL)$ (respectively, in $\calN^\downarrow(\calL)$).

\begin{lemma}
Let $\beta$ and $\beta'$ be braids, with respective tight laminations $\calL$ and $\calL'$.
If $\calN^\uparrow(\calL) = \calN^\uparrow(\calL')$ and
$\calN^\downarrow(\calL) = \calN^\downarrow(\calL')$, then
$\pi^2_\beta = \pi^2_{\beta'}$.
\label{lem:neighbor-trees-TR}
\end{lemma}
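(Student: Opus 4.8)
The plan is to show that $\pi^2_\beta$ is entirely determined by the ordered pair of trees $\bigl(\calN^\uparrow(\calL),\calN^\downarrow(\calL)\bigr)$; applying this to $\calL$ and to $\calL'$ then gives $\pi^2_\beta=\pi^2_{\beta'}$. By Corollary~\ref{cor:all-cover} the leaves of $\calN^\vartheta(\calL)$ are exactly the punctures $p_0,\ldots,p_n$, and a vertex $v_\calA$ is an ancestor of $v_\calB$ if and only if $\calA$ covers $\calB$; hence for every nice arc $\calA$ its shadow satisfies $\pi_\calL(\calA)=\{\,i : v_{p_i}\text{ is a descendant of }v_\calA\,\}$, a quantity read directly off the tree. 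It therefore suffices, for each triple $(i,\diamond,\vartheta)$, to (a) identify the neighbor arc $\calA^\vartheta_\diamond(p_i,\calL)$ with an explicitly named vertex of $\calN^\vartheta(\calL)$, (b) when that arc covers $p_k$, identify $\rext\bigl(\calA^\vartheta_\diamond(p_i,\calL)\bigr)$ with an explicitly named vertex of $\calN^{\ol\vartheta}(\calL)$ (where $\ol\vartheta$ is the orientation opposite to $\vartheta$), and (c) recover the index $k$. Point (c) is immediate from the remark following Definition~\ref{dfn:neighbor-trees}: $k$ is the largest $i$ such that $v_{p_i}$ has no sibling in $\calN^\uparrow(\calL)$ or no sibling in $\calN^\downarrow(\calL)$.

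For (a): let $p_i$ be a mobile puncture and $v_\calP$ its parent in $\calN^\vartheta(\calL)$, so $\calP$ is the innermost $\vartheta$-arc covering $p_i$. Since the curves of $\calL$ are disjoint and transversal to $\RR$, and since no two puncture-vertices can be siblings (consecutive punctures are separated by a curve, hence by an intersection point lying strictly between them), the point $(p_i)_\calL^+$ is the left endpoint of a $\vartheta$-arc precisely when $v_{p_i}$ has a right sibling, in which case that sibling is that arc; otherwise $(p_i)_\calL^+=E_\calP$ and the $\vartheta$-arc ending there is $\calP$. Thus $\calA^\vartheta_+(p_i,\calL)$ is the right sibling of $v_{p_i}$ if it exists, and $\calP$ otherwise, and symmetrically $\calA^\vartheta_-(p_i,\calL)$ uses the left sibling. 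Combined with the shadow formula this yields $\pi_\beta(i,\diamond,\vartheta)$ as a function of the trees; one also checks here that these arcs fall under the first clause of the definition of \emph{nice}, so they really are vertices of $\calN^\vartheta(\calL)$.

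The heart of the argument is (b), and I expect it to be the main obstacle: the two neighbor trees record no information about how upper and lower arcs are glued along $\RR$, yet $\rext$ crosses between the half-planes. The control comes from the rightmost-bigon index $k$. By Lemma~\ref{lem:left-endpoint-is-left} every arc of $\calL$ has left endpoint strictly less than $p_k$, so any arc with an endpoint to the right of $p_k$ covers $p_k$; hence $\rext$ sends an upper arc covering $p_k$ to a lower arc covering $p_k$ and vice versa, and, being an involution that fixes the common right endpoint, it restricts to a bijection between the chain $\calU_1\subsetneq\cdots\subsetneq\calU_r$ of upper arcs covering $p_k$ and the chain $\calD_1\subsetneq\cdots\subsetneq\calD_r$ of lower arcs covering $p_k$. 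Nesting forces $E_{\calU_1}<\cdots<E_{\calU_r}$ and $E_{\calD_1}<\cdots<E_{\calD_r}$, so, $\rext$ preserving right endpoints, the two sequences agree termwise and $\rext(\calU_s)=\calD_s$. A short argument using maximality of $k$ together with the fact that every arc covers a bigon of its own half-plane (the discussion after Lemma~\ref{lem:small-is-bigon}) shows moreover that $v_{p_k}$ has no right sibling in either tree and, via the three clauses defining \emph{nice}, that $\calU_1,\ldots,\calU_r$ are exactly the arc-ancestors of $v_{p_k}$ in $\calN^\uparrow(\calL)$ (and likewise the $\calD_s$ in $\calN^\downarrow(\calL)$). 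Consequently a neighbor arc $\calA=\calA^\vartheta_\diamond(p_i,\calL)$ covering $p_k$ occupies a definite position $s$ on the ancestor path of $v_{p_k}$ in $\calN^\vartheta(\calL)$, and $\rext(\calA)$ is the vertex in position $s$ on the ancestor path of $v_{p_k}$ in $\calN^{\ol\vartheta}(\calL)$; its shadow is then read off that tree. This supplies the second coordinate of $\pi^2_\beta$, so $\pi^2_\beta$ depends only on $\bigl(\calN^\uparrow(\calL),\calN^\downarrow(\calL)\bigr)$, and the lemma follows. The residual work, beyond the involution step, is the somewhat tedious verification that every arc named in (a) and (b) satisfies one of the three clauses defining a nice arc, so that it is indeed a vertex of the relevant neighbor tree.
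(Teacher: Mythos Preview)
Your approach is essentially the same as the paper's: identify neighbor arcs via siblings/parents in the trees, recover $k$ as the rightmost leaf lacking a sibling in one of the trees, and then match the chain of ancestors of $v_{p_k}$ in $\calN^\uparrow(\calL)$ with the chain in $\calN^\downarrow(\calL)$ via $\rext$. The paper's proof is terser but follows exactly this outline.

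There is, however, one incorrect intermediate claim in your part (b). You assert that the full chains $\calU_1\subsetneq\cdots\subsetneq\calU_r$ and $\calD_1\subsetneq\cdots\subsetneq\calD_r$ of \emph{all} arcs covering $p_k$ coincide with the arc-ancestors of $v_{p_k}$ in the neighbor trees, i.e.\ that every such arc is nice. This is false in general: the number of arcs covering $p_k$ can be made arbitrarily large (take e.g.\ $\sigma_1^N$ with $k=1$), whereas the number of nice arcs is bounded by a function of $n$ alone. The paper avoids this by listing only the \emph{nice} arcs $A_1,\ldots,A_u$ and $B_1,\ldots,B_v$ covering $p_k$ and claiming directly that $u=v$ with $A_j$ and $B_j$ sharing right endpoints.

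The fix to your argument is short and uses only what you have already set up: the map $\rext$ restricts to a bijection between nice upper arcs covering $p_k$ and nice lower arcs covering $p_k$. Indeed, if $\calA$ is nice via clause~1 (a neighbor arc), then $\rext(\calA)$ covers $p_k$ and shares its right endpoint with the neighbor arc $\calA$, so it is nice via clause~3; if $\calA$ is nice via clause~3, then $\rext(\calA)$ is the neighbor arc witnessing clause~3, hence nice via clause~1; and $\rext(\bA_n^\top)=\bA_n^\bot$. Since $\rext$ preserves right endpoints and hence the nesting order, the position of a nice arc among the arc-ancestors of $v_{p_k}$ in $\calN^\uparrow(\calL)$ equals the position of its $\rext$-image in $\calN^\downarrow(\calL)$, and your conclusion follows.
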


\begin{proof}
Let $v_{\bA_n^\top}$ and $v_{\bA_n^\bot}$ be the respective roots of $\calN^\uparrow(\calL)$ and of $\calN^\downarrow(\calL)$.
Let $\Lambda^\uparrow$ be the set of nice upper arcs of $\calL$,
and let $\Lambda^\downarrow$ be the set of nice lower arcs of $\calL$.

In addition, let $p$ be some puncture of $\calL$.
The arc $v_{\upA_-(p,\calL)}$ is either the left sibling of $p$ in $\calN^\uparrow(\calL)$, if such a left sibling exists,
or the parent of $p$ in $\calN^\uparrow(\calL)$.
We identify similarly the vertices $v_{\lowA_-(p,\calL)}$ and $v_{\calA_+^\updownarrow(p,\calL)}$
among the nodes of $\calN^\uparrow(\calL)$ and $\calN^\downarrow(\calL)$.

Moreover, let $k$ be the rightmost index of $\calL$.
We identify $k$ since $p_k$ is the rightmost puncture that does not have siblings in both $\calN^\uparrow(\calL)$ and $\calN^\downarrow(\calL)$.
Let $A_1, \ldots, A_u$ be the nice upper arcs that cover $p_k$, such that $A_x$ is covered by $A_y$ if and only if $x < y$.
Similarly, let $B_1,\ldots,B_v$ be the nice lower arcs that cover $p_k$, such that $B_x$ is covered by $B_y$ if and only if $x < y$.
It comes immediately that $u = v$ and that the arcs $A_j$ and $B_j$ share their right endpoints for all $j \in \{1,\ldots,u\}$.
Hence, we identify each of the nice arcs that cover the puncture $p_k$.

Consequently, we can compute $\pi^2_\beta(i,\diamond,\vartheta)$ for
each triple $(i,\diamond,\vartheta) \in \{0,\ldots,n\} \times \{+,-\} \times \{\uparrow,\downarrow\}$, which means that
the trees $\calN^\uparrow(\calL)$ and $\calN^\downarrow(\calL)$ uniquely determine $\pi^2_\beta$. This completes the proof.
\end{proof}

\begin{cor}
Let $\calA = (\Sigma, Q, i, \delta, Q)$ be the automaton
provided in Theorem~\ref{thm:here-is-the-automaton}.
Its state set $Q$ is of size $\#Q \leq 2^{20(n+1)}$.
\label{cor:automaton-small-size}
\end{cor}

\begin{proof}
Let $\bN$ be the set $\{(\calN^\uparrow(\calL),\calN^\downarrow(\calL)) : \calL$ is a tight lamination$\}$ and let
$\Pi$ be the set $\{\pi^2_\beta : \beta \in B_n\}$.
Lemma~\ref{lem:neighbor-trees-TR} states that there exists some surjective projection $\bN \mapsto \Pi$, hence that
$\#\Pi \leq \#\bN$.
Since $Q = \Pi$, it remains to show that $\#\bN \leq 2^{20(n+1)}$.

Let $\calL$ be some tight lamination.
The tree $\calN^\uparrow(\calL)$ contains at most $n+1$ nodes of the type $v_{p_i}$,
$2(n+1)$ nodes of the type $v_{\calA_\pm^\uparrow(p_i)}$, $1$ node of the type $\bA_n^\top$ and
$2(n+1)$ nodes of the type $v_A$, where $A$ is a nice upper arc that covers $p_k$.
This proves that $\calN^\uparrow(\calL)$ has at most $5n+6$ nodes.
Similarly, $\calN^\downarrow(\calL)$ has at most $5n+6$ nodes.

Moreover, both $\calN^\uparrow(\calL)$ and $\calN^\downarrow(\calL)$ are rooted ordered trees.
For each integer $k$, there exists $C_{k-1}$ rooted ordered trees with $k$ nodes, where $C_k = \frac{1}{k+1}\binom{2k}{k}$
is the $k$-th Catalan number (see~\cite[p.~35]{Flajolet:2009:AC:1506267}).
Hence, the relations
\[\sum_{i=0}^k C_i \leq (k+1) C_k = \binom{2k}{k} = \prod_{i=1}^k \frac{2i}{i} \cdot \frac{2i-1}{i} \leq 2^{2k}\]
show that there exist at most $2^{10(n+1)}$ rooted ordered trees with at most $5n+6$ nodes.
It follows that $\#Q = \#\Pi \leq \#\bN \leq 2^{20(n+1)}$.
\end{proof}

We can then prove that the size of the automaton $\calA$ has the same order of magnitude
as the size of the minimal automaton.

\begin{pro}
Let $\calA_{\min} = (\Sigma, Q_{\min}, i_{\min}, \delta_{\min}, F_{\min})$ be the minimal deterministic automaton
that accepts the set of relaxation normal words for the braid group $B_n$.
The sets $F_{\min}$ and $Q_{\min}$ are equal,
with cardinality $\#Q_{\min} \geq 2^{n/2-1}$.
\label{pro:automaton-big-size}
\end{pro}

\begin{proof}
Since $\calA_{\min}$ is minimal,
each of its states is co-accessible:
from each state $s \in Q_{\min}$, one can reach a state $s' \in F_{\min}$.
Since the relaxation normal form is prefix-closed, it follows that $Q_{\min} \subseteq F_{\min}$, i.e. that
$F_{\min} = Q_{\min}$.

To each braid $\alpha$ corresponds a unique relaxation normal word in $\Sigma^\ast$,
hence one unique state in $Q_{\min}$. We denote this state by $\delta^\ast(\alpha)$.
Now, let $m = \lfloor \frac{n-1}{2} \rfloor$.
To each tuple $\epsilon = (\epsilon_1,\epsilon_2,\dots,\epsilon_m) \in \{-1,1\}^m$,
we associate the braid
$\beta_\epsilon = \sigma_1^{\epsilon_1} \sigma_3^{\epsilon_2} \dots \sigma_{2m-1}^{\epsilon_m} \in B_n$.
An immediate induction on $m$ shows that
$\sigma_1^{\epsilon_1} \cdot \sigma_3^{\epsilon_2} \cdot \dots \cdot \sigma_{2m-1}^{\epsilon_m}$
is a relaxation normal word.

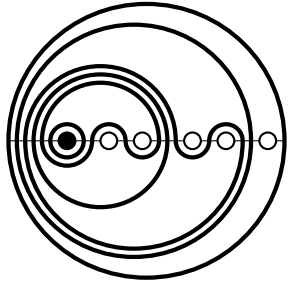
\begin{figure}[!ht]
\begin{center}
\begin{tikzpicture}[scale=0.11]
\draw[draw=black] (3,0) -- (36,0);

\draw[fill=black,draw=black,thick] (10,0) circle (1);

\draw[draw=black,ultra thick] (3,0) arc (180:-180:16.5);
\draw[draw=black,ultra thick] (4,0) arc (180:-180:14);
\draw[draw=black,ultra thick] (5,0) arc (-180:0:13);
\draw[draw=black,ultra thick] (5,0) arc (180:0:9);
\draw[draw=black,ultra thick] (6,0) arc (180:-180:8);
\draw[draw=black,ultra thick] (7,0) arc (180:0:7);
\draw[draw=black,ultra thick] (7,0) arc (-180:0:3);
\draw[draw=black,ultra thick] (8,0) arc (180:-180:2);

\draw[draw=black,ultra thick] (17,0) arc (0:180:2);
\draw[draw=black,ultra thick] (17,0) arc (180:360:2);
\draw[draw=black,ultra thick] (27,0) arc (180:0:2);
\draw[draw=black,ultra thick] (27,0) arc (360:180:2);

\draw[fill=white,draw=black,thick] (15,0) circle (1);
\draw[fill=white,draw=black,thick] (19,0) circle (1);
\draw[fill=white,draw=black,thick] (25,0) circle (1);
\draw[fill=white,draw=black,thick] (29,0) circle (1);
\draw[fill=white,draw=black,thick] (34,0) circle (1);;
\end{tikzpicture}
\end{center}
\caption{The braid $\beta_{(1,-1)}$ (for $n = 5$)}
\label{fig:beta}
\end{figure}

Then, if $\epsilon$ and $\eta$ are different tuples in $\{-1,1\}^m$,
consider some integer $i \leq m$ such that $\epsilon_i \neq \eta_i$.
Without loss of generality, we assume that $\epsilon_i = 1$ and that $\eta_i = -1$.
One shows easily that $\RNF(\beta_\epsilon) \cdot [2i \curvearrowbotright n]$ is a relaxation normal word, although
$\RNF(\beta_\eta) \cdot [2i \curvearrowbotright n]$ is not.
This shows that $\delta^\ast(\beta_\epsilon) \neq \delta^\ast(\beta_\eta)$ and, consequently, that
$\#Q_{\min} \geq 2^m \geq 2^{n/2-1}$.
\end{proof}

For example, Figure~\ref{fig:beta} shows the $5$-strand braid $\beta_{(1,-1)} = [1 \curvearrowright 2] [3 \curvearrowbotright 4]$:
we have $\RNF(\beta_{(1,-1)}) = [1 \curvearrowright 2] \cdot [3 \curvearrowbotright 4]$,
$\RNF(\beta_{(1,-1)} [2 \curvearrowright 5]) = [1 \curvearrowright 5] \cdot [3 \curvearrowbotright 4]$ and
$\RNF(\beta_{(1,-1)} [4 \curvearrowright 5]) = [1 \curvearrowright 2] \cdot [3 \curvearrowbotright 4] \cdot [4 \curvearrowright 5]$.

Corollary~\ref{cor:automaton-small-size} and Proposition~\ref{pro:automaton-big-size} prove
that the automaton constructed in Theorem~\ref{thm:here-is-the-automaton}
is of minimal size, up to an exponent independent of $n$.

\begin{thm}
Both the automaton $\calA$ of Theorem~\ref{thm:here-is-the-automaton}
and the minimal automaton $\calA_{\min}$ of Proposition~\ref{pro:automaton-big-size}
have state sets with cardinalities $2^{\Omega(n)}$.
\label{thm:final-result}
\end{thm}

In particular, Theorem~\ref{thm:final-result} can be interpreted
from an algorithmic point of view.
A streaming algorithm for checking the membership in the relaxation normal that would
rely on remembering extended shadows would require
the storage of up to $20(n+1)$ memory bits. Moreover, any streaming algorithm
that would perform this membership checking would require storing at least $n/2-1$ bits.
The space consumption of first algorithm is therefore optimal up to a multiplicative constant.

\section{Investigating Variants of the Relaxation Normal Form}
\label{section:other-NF}

In the above sections, we investigated properties of the relaxation normal form obtained by follwing Definition~\ref{dfn:RNF}.
When relaxing the tight lamination of a braid,
which picked the rightmost puncture $p_k$ covered by a bigon,
then slid $p_k$ along its right neighbor arc or, if this first choice turned out to be impossible,
we slid $p_k$ along its left neighbor arc.
We review here some variants of the relaxation normal form, obtained by selecting another puncture to be slid,
or another arc along which to slid the puncture:
\begin{enumerate}
 \item the \emph{simple right normal form} is obtained by selecting the rightmost puncture $p_k$ covered by a bigon,
 then systematically sliding $p_k$ along its left neighbor arc (which, due to Lemma~\ref{lem:left-endpoint-is-left},
 is always possible);
 \item the \emph{left normal form} is obtained by selecting the leftmost puncture $p_\ell$ covered by a bigon,
 then sliding $p_\ell$ along its left neighbor arc, or along its right neighbor arc if needed;
 \item the \emph{simple left normal form} is obtained by selecting the leftmost puncture $p_\ell$ covered by a bigon,
 then sliding $p_\ell$ along its right neighbor arc (which also is always possible);
 \item the \emph{outermost normal form} is obtained as follows:
 for each puncture $p_m$ covered by a bigon, consider the integer $i$ such that
 $p_m$ lies in the area enclosed between the curves $\calL_i$ and $\calL_{i+1}$;
 we select the puncture $p_m$ whose associated integer $i$ is maximal,
 then we slide $p_m$ along its right neighbor arc, or along its left neighbor arc if needed;
 \item the \emph{right covered normal form} is obtained as follows:
 for each puncture $p_m$ covered by a bigon, let $i$ be the number of arcs that cover $p_m$ and no other puncture;
 we select the puncture $p_m$ whose associated integer $i$ is maximal
 (in case of equality, we select the rightmost such puncture),
 then we slide $p_m$ along its right neighbor arc, or along its left neighbor arc if needed.
\end{enumerate}

First, the analysis of the relaxation normal form performed above
would work similarly with the simple right normal form.
More precisely, Proposition~\ref{pro:caracterisation-extension}
also holds for the simple right normal form,
provided that its original requirement $(2)$ be replaced by the new requirement $(2')$:
$\pi_\beta(k,+,\uparrow) \subseteq \{0,\ldots,k\}$.
Then, Proposition~\ref{pro:automata-state}, Theorem~\ref{thm:here-is-the-automaton}, Theorem~\ref{thm:master-thm}
and subsequent properties investigated in sections~\ref{section:automatic},~\ref{section:positivity} and~\ref{section:automaton-size}
also hold for the simple right normal form.

Second, let $\phi_\Delta$ denote the conjugation by the Garside element $\Delta_n = (\sigma_1) (\sigma_2 \sigma_1) \ldots$ $(\sigma_{n-1} \ldots \sigma_2 \sigma_1)$.
The involutive group automorphism $\phi_\Delta$ maps each Artin generator $\sigma_i$ to the generator $\sigma_{n+1-i}$, and vice-versa.
Hence, consider a braid $\beta \in B_n$, and let $w_1 \cdot \ldots \cdot w_k$ be the relaxation normal form of the braid $\phi_\Delta(\beta)$:
the left normal form of $\beta$ is the word $\phi_\Delta(w_1) \cdot \ldots \cdot \phi_\Delta(w_k)$.
Consequently, we say that the relaxation normal form and the left normal form are conjugate to each other via the conjugation automorphism $\phi_\Delta$.
Similarly, the simple right normal form and the simple left normal form are conjugate to each other via the conjugation automorphism $\phi_\Delta$.
Hence, the results of sections~\ref{section:braid-laminations} to~\ref{section:automaton-size} also hold in that case.

Third, the outermost normal form consists in braiding the second strand of the braid with the first one,
then the third strand with the first two strands, and so on.
Hence, it is easy to check that this normal form is regular,
even without using sophisticated tools such as extended shadows.

Finally, the right covered normal form is likely to provide us with short words,
like the normal form studied by Dynnikov and Wiest~\cite{dynnikov:hal-00001267} while investigating the geometric complexity of braids.
Indeed, if we relax a lamination $\calL$ by sliding a puncture $p_m$ whose associated integer is $i$,
then the lamination obtained after relaxing $\calL$ is of complexity at most $\|\calL\| - 2i$.
Hence, heuristically, choosing punctures $p_m$ with large associated integers $i$
should be a wise choice.
However, for $n \geq 6$, the word
$[2 \curvearrowbotright 3]^k \cdot [5 \curvearrowbotright 6]^\ell \cdot [1 \curvearrowbotright 2] \cdot [4 \curvearrowbotright 5]$
is in normal form if and only if $k \leq \ell$,
which proves that the right covered normal form is not regular.

Overall, there exists a wide class of algorithms, based on relaxing tight laminations, and
whose associated normal forms are regular.
However, if such algorithms require counting arbitrarily many arcs of the lamination,
then they may unsurprisingly give rise to non-regular normal forms.

An interesting challenge would be to identify natural algorithms whose
associated normal form would be regular, but also synchronously automatic (or, at least
asynchronously automatic) for all $n \geq 2$,
and in which a product $x_1 x_2 \ldots x_k$ of Artin generators
would have a ``short'' normal form, i.e. a normal form of length at most $K_n k$, for some constant $K_n$.
In particular, although the relaxation normal form was proven not to be asynchronously automatic for $n \geq 4$,
it remains possible that it might produce short words.

\section*{Acknowledgments}

The author is very thankful to an anonymous referee for his (her) insightful remarks and suggestions,
which led to improving the overall readability of the article and to adding section~\ref{section:automatic},
and to Bertold Wiest, whose help was crucial for finding most results of that section.

\pagebreak

\end{document}